\newtheorem{theorem}{Theorem}[section]
\newtheorem{mainresult}{Main Theorem}
\newtheorem{lemma}[theorem]{Lemma}
\theoremstyle{plain}
\newtheorem{claim}[theorem]{Claim}
\newtheorem{corollary}[theorem]{Corollary}
\newtheorem{prop}[theorem]{Proposition}
\theoremstyle{definition}
\newtheorem{definition}[theorem]{Definition}
\theoremstyle{remark}
\newtheorem{remark}[theorem]{Remark}
\numberwithin{equation}{section}
\newcommand{\CC}{\mathbb{C}}
\newcommand{\GG}{\mathbb{G}}
\newcommand{\M}{\overline{M}}
\newcommand{\HH}{\mathrm{H}}
\newcommand{\PP}{\mathbb{P}}
\renewcommand{\O}{\mathcal{O}}
\newcommand{\QQ}{\mathbb{Q}}
\newcommand{\Proj}{\operatorname{Proj}}
\newcommand{\SL}{\operatorname{SL}}
\newcommand{\PGL}{\operatorname{PGL}}
\newcommand{\gitq}{/\hspace{-0.20pc}/}
\newcommand{\ra}{\rightarrow}
\newcommand{\co}{\colon\thinspace} 
\newcommand{\Sym}{\operatorname{Sym}}
\newcommand{\rank}{\operatorname{rank}}
\newcommand\Mg[1]{\overline{\mathcal{M}}_{#1}}
\newcommand{\MX}{\M^{\, G}}
\newcommand{\Trig}{\operatorname{Trig}}
\newcommand\dra{\dashrightarrow}
\newcommand\nb{\nobreakdash}
\DeclareMathOperator{\Gr}{\mathbb{G}}
\begin{document}

\title{Stability of genus five canonical curves}

\author{Maksym Fedorchuk}
\address[Fedorchuk]{Department of Mathematics, Boston College, 
140 Commonwealth Avenue,
Chestnut Hill, MA 02467}
\email{maksym.fedorchuk@bc.edu}

\author{David Ishii Smyth}
\address[Smyth]{Department of Mathematics, Harvard University,
1 Oxford Street,
Cambridge, MA 01238}
\curraddr{Centre for Mathematics and its Applications, Mathematical Sciences Institute,
    Australian National University, Canberra, ACT 0200, Australia}
\email{david.smyth@anu.edu.au}
\thanks{The first author was partially supported by NSF grant DMS-1259226. 
The second author was partially supported by NSF grant DMS-0901095.}
\date{}

\dedicatory{Dedicated to our advisor, Joe Harris, on his sixtieth birthday.}


\begin{abstract}
We analyze GIT stability of nets of quadrics in $\PP^4$ up to projective equivalence. 
Since a general net of quadrics defines a canonically embedded smooth curve of genus $5$, 
the resulting quotient $\MX:=\Gr(3,15)^{ss} \gitq \SL(5)$ gives a birational model of $\M_5$. 
We study the geometry of the associated contraction $f\co \M_{5} \dashrightarrow \MX$, 
and prove that $f$ is the final step in the log minimal model program for $\M_{5}$.
\end{abstract}

\maketitle

\section{Introduction}

A canonically embedded non-hyperelliptic, non-trigonal smooth curve of genus $5$ 
is a complete intersection of $3$ quadrics in $\PP^{4}$ \cite[Ch.V]{ACGH}. 
Thus, the Grassmannian of nets in $\PP\HH^0(\PP^4, \O_{\PP^4}(2))\simeq \PP^{14}$ 
gives a natural compactification of the open Hilbert scheme of non-hyperelliptic, 
non-trigonal smooth canonical curves of genus $5$, and the corresponding GIT quotient 
\[
\MX:=\Gr(3,15)^{ss} \gitq \SL(5)
\] 
is a projective birational model of $\M_5$. 
In this paper, we study the geometry of $\MX$ and show that the natural birational contraction 
$f\co \M_5 \dashrightarrow \MX$ represents the final stage of the log minimal model program for $\M_{5}$.

The main portion of the paper is devoted to a GIT stability analysis of nets of quadrics in $\PP^4$. 
The GIT stability analysis for {\em pencils} of quadrics in $\PP^{4}$ appears 
in \cite{miranda}, where it is shown that a pencil of quadrics
in $\PP^4$ is semi-stable if and only if the associated discriminant binary quintic is non-zero
and has no triple roots, and in \cite{mabuchi-mukai}. More generally a pencil of quadrics in $\PP^{n}$ 
is semi-stable if and only if the associated discriminant binary $(n+1)$-form is non-zero and is
GIT-semi-stable with respect to the natural $\SL(2)$-action \cite[Theorem 5]{miranda}. 
The GIT analysis for nets of quadrics turns out to be more involved. 
In particular, as Remark \ref{R:triple-conic} shows,
there is no natural
correspondence between $\SL(5)$-stability of a net and $\SL(3)$-stability of the associated
discriminant quintic curve. 

We prove that a semi-stable net defines 
a locally planar curve of genus $5$ embedded  in $\PP^4$ by its dualizing sheaf, and give a description of the 
singularities occurring on such curves.

\begin{mainresult}\label{MT1}
A net is semi-stable if and only if it defines a locally planar genus $5$ curve 
satisfying one of the following conditions:
\begin{enumerate}
\item $C$ is a reduced quadric section of a smooth quartic del Pezzo in $\PP^4$,
but $C$ is \emph{not} one of the following:
\begin{enumerate}
\item A union of an elliptic quartic curve and two conics meeting in a pair of triple points.
\item A union of two elliptic quartics meeting along an $A_5$ and an $A_1$ singularities.
\item A curve with a $4$-fold point with two lines as its two branches.
\item A union of two tangent conics and an elliptic quartic 
meeting the conics in a $D_6$ singularity and two nodes.
\item A curve with a $D_5$ singularity such that the hyperelliptic involution of the normalization exchanges the
points lying over the $D_5$ singularity. 
\item $C$ contains a conic meeting the residual genus $2$ component in an $A_7$ singularity 
and the attaching point of the genus $2$ component is a Weierstrass point.
\item A degeneration of one of the six curves listed above.
\end{enumerate}
\item $C$ is non-reduced and it degenerates isotrivially to one of the following curves: 
\begin{enumerate}
\item The balanced ribbon, defined by 
\begin{equation}\label{E:ribbon}
(ac-b^2, ae-2bd+c^2, ce-d^2).
\end{equation}
\item A double twisted cubic meeting the residual conic in two points, defined by
\begin{equation}\label{E:dtc}
(ad-bc, ae-c^2+L^2, be-cd),
\end{equation}
where $L$ is a general linear form.

\item A double conic meeting the residual rational normal quartic in three points, defined by
\begin{equation}\label{E:double-conic}
(ad-bc, ae-c^2+bL_1+dL_2, be-cd), 
\end{equation}
where $L_1$ and $L_2$ are general linear forms. In particular, 
we have a semi-stable triple conic with two lines
\begin{equation}\label{E:triple-conic}
(ad-bc, ae+bd-c^2, be-cd).
\end{equation}

\item {Two double lines joined by two conics}, defined by 
\begin{equation}\label{E:double-line}
(ad, ae+bd-c^2, be).
\end{equation}
\end{enumerate}
\end{enumerate}
\end{mainresult}

We should make a comment on the shortcomings of this result. 
While this theorem gives in principle 
a complete characterization of the singularities arising on curves in $\MX$, 
it does not give a satisfactory description of the functor represented by $\MX$. 
The difficulty is that a complete characterization of the functor of semi-stable curves necessarily involves 
the global geometry of the curves in question in a way that defies 
uniform description. For example, $A_1$ and $A_5$ singularities are generally 
allowed, except in the unique case when two 
elliptic quartics meet in $A_1$ and $A_5$ singularities. 
Similarly, $D_5$ singularities are generally allowed, except in the case when the 
hyperelliptic involution on the normalization
exchanges points lying over the singularity.

As a by-product of our GIT analysis, we obtain a good understanding 
of the geometry of the birational map $f\co \M_5 \dra \MX$. 
To state our first result in this direction, let us define the $A_5^{\{1\}}$-locus to be the locus of curves in $\MX$ 
which can be expressed as the union of a genus 3 curve and a smooth rational curve meeting along an 
$A_5$ singularity. The significance of these curves lies in the fact that their stable limits 
are precisely curves in $\Delta_2\subset \M_5$ 
with a genus $2$ component attached at a non-Weierstrass point.
 Our main results regarding the birational geometry of $f$
can now be summarized in the following theorem.
\begin{mainresult}\label{MT2}
The birational map $f\co \M_5 \dra \MX$ is a rational contraction, contracting the following divisors:
\begin{enumerate}
\item $f$ contracts $\Delta_1$ and exhibits the generic point of $\Delta_1$ 
as a fibration over the $A_2$-locus in $\MX$.
\item $f$ contracts $\Delta_2$ and exhibits the generic point of $\Delta_2$ 
as a fibration over the $A_{5}^{\{1\}}$-locus in $\MX$.
\item $f$ contracts the trigonal divisor $\Trig_5 \subset \M_5$ to the single point 
given by Equation \eqref{E:triple-conic}.
\end{enumerate}
\end{mainresult}
In addition, $f$ flips various geometrically significant loci in the boundary of $\M_5$ to associated 
equisingular strata in $\MX$, as summarized in Table \ref{table-replacement}. 
Detailed proofs of the assertions made in this table would take us rather far afield into the intricacies of stable 
reduction; thus, we leave these assertions without proof and merely offer the table as a guide to future 
exploration of $f$.
We refer the reader to \cite{HarMor} for a beautiful introduction to stable reduction, 
\cite{hassett-stable} for the results concerning stable reduction of planar curve singularities, 
and the recent survey \cite{casalaina-martin} for an in-depth guide to stable reduction.

\begin{table}[htb]
\caption{Conjectural outline of the log MMP for $\M_{5}$.}
\label{table-replacement}
\renewcommand{\arraystretch}{1.3}
\begin{tabular}{| c| c | c |}
\hline
$\alpha$ &Singularity Type    & Locus in $\M_5(\alpha+\epsilon)$    \\
\hline
\hline
$9/11$ &$A_2$             & \small{elliptic tails attached nodally}\\
\hline
$7/10$ &$A_3$            & \small{elliptic bridges attached nodally } \\
\hline
$2/3$ &$A_4$                & \small{genus $2$ tails attached nodally at a Weierstrass point}  \\
\hline
$19/29$ &$A_5^{\{1\}}$            & \small{genus $2$ tails attached nodally}\\
\hline
$19/29$ &$A_{3/4}$                & \small{genus $2$ tails attached tacnodally at a Weierstrass point}\\
\hline
$12/19$ &$A_{3/5}$                 & \small{genus $2$ tails attached tacnodally}\\       
\hline
$17/28$ &$A_5$               & \small{genus $2$ bridges attached nodally at
conjugate points}         \\
\hline
$5/9$&$D_4$   &    \small{elliptic triboroughs attached nodally}   \\
\hline
$5/9$&$D_5$    & \small{genus $2$ bridges attached nodally at a
Weierstrass and free point}\\
\hline
$5/9$ &$D_6^{\{1,2\}}$   & \small{genus $2$ bridges attached
nodally at two free points}\\
&double lines   & \\
\hline
$25/44$ &$A_{10}$, $A_{11}$ & \small{hyperelliptic curves} \\
&ribbons&  \\
\hline
$1/2$ &double twisted cubics& \small{irreducible nodal curves with hyperelliptic normalization} \\
&$D_8^{\{1,2\}}$ & \\
\hline
$14/33$&\small{$4$-fold point}  &    \small{genus $3$ triboroughs}   \\
\hline
$14/33$ &triple conics& \small{trigonal curves} \\
\hline
\end{tabular}
\medskip
\end{table}
Finally, we study the contraction $f\co \M_{5} \dra \MX$ from the perspective of the minimal model program. Recall that
\[
\M_{5}(\alpha):=\Proj \bigoplus_{m \geq 0} \HH^0(\Mg{5}, \lfloor m(K_{\Mg{5}}+\alpha \delta) \rfloor), \quad \alpha \in [0,1]\cap \QQ,
\]
and that as $\alpha$ decreases from 1, the corresponding birational models constitute the log minimal model 
program for $\M_{5}$ \cite{hassett_genus2, hassett-hyeon_contraction, hassett-flip}. 
Our final result interprets the contraction $f\co \M_{5} \dra \MX$ as the final step of this program.
\begin{mainresult}\label{T:moving-slope} \hfill

(1) The moving slope of $\M_5$ is $33/4$, realized by the divisor
\[
f^*\O(1) \sim 33\lambda-4\delta_0-15\delta_1-21\delta_2,
\]
where $\sim$ denotes the numerical proportionality.

(2) There is a natural isomorphism $\MX \simeq \M_5(\alpha)$, for all $\alpha \in (3/8, 14/33] \cap \QQ$, 
identifying $f\co \M_5 \dra \MX$ with the final step of the log MMP for $\M_5$. In particular, 
$\M_5(3/8)$ is a point.
\end{mainresult}
In Table \ref{table-replacement}, 
we have listed the $\alpha$-invariants, as defined in \cite{afs-modularity},  
of some singularities appearing on curves parameterized by $\M_5(14/33)$
in order to indicate the anticipated threshold values of $\alpha$ 
at which the transformations should occur in the course of the
log MMP for $\M_5$. 
The reader should also refer to \cite{afs-modularity} for the definition of the notations $A_5^{\{1\}}$, 
$D_{6}^{\{1,2\}}$, $A_{3/4}$, $A_{3/5}$.

Note that while we now have a nearly complete description of the log MMP for $\M_{4}$ 
\cite{hyeon-lee_genus4, fedorchuk-genus-4, CMJL-genus-4},
we have no construction of the intermediate models $\M_{5}(\alpha)$ for $14/33<\alpha\leq 2/3$. 
Table \ref{table-replacement} gives a rather ominous indication of the potential complexity of this task. 

Let us now give a roadmap of the paper. In Section \ref{S:git-analysis}, 
we describe GIT-unstable nets of quadrics. 
We first describe a complete, finite set $\{\rho_i\}_{i=1}^{12}$ of destabilizing one-parameter subgroups 
(Theorem \ref{T:subgroups}), and then provide geometric descriptions of the nets of quadrics destabilized 
by $\rho_i$ for each $1\leq i \leq 12$ (Theorem \ref{T:Description}). 
In Section \ref{S:semi-stable}, we combine a geometric study of quartic surfaces
in $\PP^4$ with Theorem \ref{T:Description} to obtain a positive description of semi-stable nets of quadrics
(Theorems \ref{T:reduced-semi-stable} and \ref{T:non-reduced-semi-stable}). Finally, in Section \ref{S:proofs}
we use our semi-stability results to give proofs of 
Main Theorems \ref{MT1}, \ref{MT2}, \ref{T:moving-slope}.

\subsection*{Acknowledgements}
We are profoundly grateful to Joe Harris for introducing us to the subject of algebraic geometry, 
for generously sharing his love of the discipline and his unique creative style. 
Our understanding and appreciation of Geometric Invariant Theory also owes a great deal 
to the papers and lectures of Brendan Hassett, David Hyeon, and Ian Morrison. 
We wish to thank each of these individuals, as well as Jarod Alper, Anand Deopurkar, David Jensen, 
Radu Laza, and David Swinarski, for numerous conversations and suggestions 
relating to the contents of this paper.

\section{GIT analysis}\label{S:git-analysis}

\subsection{GIT preliminaries}
Set $V:=\HH^0\bigl(\PP^4, \O(1)\bigr)$ and let $W:=\HH^0\bigl(\PP^4, \O(2)\bigr)\simeq \Sym^2 V$ 
be the vector space of quadratic forms. 
To a net of quadrics $\Lambda=\bigl(Q_1, Q_2, Q_3\bigr)$ in $W$, we associate its {\em Hilbert point}
\[
[\Lambda]:=[Q_1\wedge Q_2 \wedge Q_3] \in \GG\bigl(3, W\bigr) \subset \PP \bigwedge^{3} W.
\]
We denote by $\widetilde{[\Lambda]}:=Q_1\wedge Q_2 \wedge Q_3$ 
a lift of $[\Lambda]$ to $\bigwedge^{3} W$. 

Recall that $\Lambda$ is said to be \emph{semi-stable} if $0 \notin \SL(5) \cdot \widetilde{[\Lambda]}$, 
and \emph{stable} if in addition $\SL(5) \cdot \widetilde{[\Lambda]}$ is closed. 
GIT gives a projective quotient $\GG(3,W)^{ss} \gitq \SL(5)$, 
where $\GG(3,W)^{ss} \subset \GG\bigl(3,W\bigr)$ is the open locus of semi-stable nets, and the main 
objective of this paper is to give a geometric description of $\GG(3,W)^{ss}$.

The standard tool for such analysis is the Hilbert-Mumford numerical criterion 
\cite[Theorem 2.1]{GIT}. 
In our situation, the statement of the numerical criterion may be formulated as follows: 
Let $\rho\co \CC^* \rightarrow \SL(5)$ be a 
one-parameter subgroup (1-PS), acting diagonally on a basis $\{a,b,c,d,e\}$ of $V$
with weights $\{\bar{a}, \bar{b}, \bar{c}, \bar{d}, \bar{e}\}$, satisfying:
\begin{itemize}
\item $\bar{a}+\bar{b}+\bar{c}+\bar{d}+\bar{e}=0$,
\item $\bar{a} \geq \bar{b} \geq \bar{c} \geq  \bar{d} \geq \bar{e}$,
\item Not all weights $\{\bar{a}, \bar{b}, \bar{c}, \bar{d}, \bar{e}\}$ are $0$.
\end{itemize}
We call such an action \emph{normalized}. 
The basis $\{a,b,c,d,e\}$ induces a basis of  $\bigwedge^{3} W$, 
with Pl\"{u}cker coordinates as basis elements. The {\em $\rho$-weight} of a quadratic monomial $m=xy$ is 
$w_{\rho}(m)=\bar{x}+\bar{y}$ 
and the $\rho$-weight of a Pl\"{u}cker coordinate $m_1\wedge m_2 \wedge m_3$ 
is $\sum_{i=1}^3w_{\rho}(m_i)$. We say that a net $\Lambda$ is \emph{$\rho$-semi-stable} 
(resp., \emph{$\rho$-stable}) if there exists a Pl\"{u}cker coordinate that does not vanish on $[\Lambda]$ 
with {\em non-negative} (resp., {\em positive}) $\rho$-weight. 
With this notation, the numerical criterion simply asserts that $\Lambda$ is semi-stable 
(resp., stable) if and only if $\Lambda$ is $\rho$-semi-stable (resp., $\rho$-stable) 
for all 1-PS's. 

\emph{A priori}, the numerical criterion requires one to check $\rho$-semi-stability for all 1-PS's. However, 
there necessarily exists a \emph{finite} set of numerical types of 1-PS's 
$\{\rho_i\}_{i=1}^{N}$ such that the union of the $\rho_i$-unstable points is 
$\GG(3,15) \setminus \GG(3,15)^{ss}$. The first main result of this section, Theorem \ref{T:subgroups}, 
describes such a set of 1-PS's explicitly. The second main result of this section, Theorem \ref{T:Description}, 
gives geometric characterizations of the nets destabilized by each $\rho_i$ in our list. Finally, in Section 
\ref{S:semi-stable}, we use this result to describe the semi-stable locus $\GG(3,15)^{ss} \subset \GG(3,15)$ 
explicitly.

\subsection{Notation and conventions}
Throughout this section, we use the following notation. 
Given a basis $\{a,b,c,d,e\}$ of $V$,  
we consider two orderings on the set of quadratic monomials in $W$. 
There is the lexicographic ordering, which is complete, and which we denote by $\succ_{lex}$. 
Then there is the ordering, denoted by $\geqslant$, according to which 
$m_1 \geqslant m_2$ if and only if $w_{\rho}(m_1)\geq w_{\rho}(m_2)$ for any normalized 1-PS acting diagonally on $\{a,b,c,d,e\}$. 
Note that 
\[
m_1\geqslant m_2 \Longrightarrow m_1\succeq_{lex} m_2.
\]

Finally, given a normalized 1-PS acting on $\{a,b,c,d,e\}$, there is a complete ordering 
$\succ_{\rho}$ on the quadratic monomials in $W$ defined as follows: 
$m_1\succ_{\rho} m_2$ if and only if one of the following conditions hold:
\begin{enumerate}
\item
$w_{\rho}(m_1)>w_{\rho}(m_2)$
\item 
$w_{\rho}(m_1)=w_{\rho}(m_2)$ and $m_1\succ_{lex} m_2$.
 \end{enumerate}
 
For any quadric $Q \in W$, we let $in_{lex}(Q)$ denote the initial monomial of $Q$ with respect to 
$\succ_{lex}$ and, if $\rho$ is a normalized 1-PS acting on $\{a,b,c,d,e\}$, 
we let $in_{\rho}(Q)$ denote the initial monomial of $Q$ with respect to $\succ_{\rho}$. 

For any net $\Lambda$, we may choose a basis 
$\{Q_1, Q_2, Q_3\}$ such that $in_{lex}(Q_1) \succ_{lex} in_{lex}(Q_2) \succ_{lex} in_{lex}(Q_3)$. 
We call such a basis of $\Lambda$ \emph{normalized}. 

Finally, given a basis $\{a,b,c,d,e\}$ of $V$, we define the {\em distinguished flag} 
$O \subset L \subset P \subset H \subset \PP V$ as follows:
\begin{align*}
O: &\ b=c=d=e=0, \\
L: &\ c=d=e=0, \\
P: &\ d=e=0, \\
H: &\ e=0.
\end{align*}

\subsection{Classification of destabilizing subgroups}
\label{S:subgroups}

\begin{theorem}\label{T:subgroups} 
 Suppose that $\Lambda$ is semi-stable with respect to every one-parameter subgroup
of the following numerical types:
\begin{enumerate}
\item $\rho_1=(1,1,1,1,-4)$. 
\item $\rho_2=(2,2,2,-3,-3)$.
\item $\rho_3=(3,3,-2,-2,-2)$.
\item $\rho_4=(4,-1,-1,-1,-1)$.
\item $\rho_{5}=(3,3,3,-2,-7)$.
\item $\rho_{6}=(4,4,-1,-1,-6)$.
\item $\rho_{7}=(9,4,-1,-6,-6)$.
\item $\rho_{8}=(7,2,2,-3,-8)$.
\item $\rho_{9}=(12,7,2,-8,-13)$.
\item $\rho_{10}=(9,4,-1,-1,-11)$.
\item $\rho_{11}=(14,4,-1,-6,-11)$.
\item $\rho_{12}=(13,8,3,-7,-17)$.
\end{enumerate}
Then $\Lambda$ is semi-stable. 
\end{theorem}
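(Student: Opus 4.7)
The plan is to reduce the infinite condition of the Hilbert-Mumford numerical criterion to a check against the twelve specific 1-PS's via a finite polyhedral case analysis. Any destabilizing 1-PS is $\SL(5)$-conjugate to a normalized diagonal 1-PS lying in the positive Weyl chamber
\[
C = \{(\bar a, \bar b, \bar c, \bar d, \bar e) \in \QQ^5 \,:\, \bar a \geq \bar b \geq \bar c \geq \bar d \geq \bar e,\ \bar a + \bar b + \bar c + \bar d + \bar e = 0\}.
\]
This is a simplicial $4$-dimensional cone whose extremal rays are precisely the fundamental weights of type $A_4$, namely $\rho_1, \rho_2, \rho_3, \rho_4$; the remaining eight 1-PS's $\rho_5, \ldots, \rho_{12}$ are distinguished rays in the interior of $C$.

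For a given $\rho \in C$, I would choose a basis $(Q_1, Q_2, Q_3)$ of $\Lambda$ normalized with respect to $\succ_\rho$, so that the matrix expressing the initial monomials $\init_\rho(Q_i)$ in the $Q_j$'s is triangular. Setting $m_i := \init_\rho(Q_i)$, the Pl\"ucker coordinate $m_1 \wedge m_2 \wedge m_3$ is non-vanishing on $[\Lambda]$ and realizes the maximum $\rho$-weight among all non-vanishing Pl\"ucker coordinates of $[\Lambda]$. Hence $\Lambda$ is $\rho$-unstable if and only if $w_\rho(m_1)+w_\rho(m_2)+w_\rho(m_3) < 0$. This reduces the theorem to the following combinatorial statement: for every initial triple $(m_1, m_2, m_3)$ arising from a normalized basis on some open subcone of $C$ and with non-empty destabilization cone $D(m_1, m_2, m_3) := \{\rho \in C : w_\rho(m_1 m_2 m_3) < 0\}$, either one of $\rho_1, \ldots, \rho_{12}$ lies in $D(m_1, m_2, m_3)$, or else some other Pl\"ucker coordinate non-vanishing on $[\Lambda]$ has negative $\rho_i$-weight for some $i$.

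The strategy is then a systematic case analysis: partition $C$ into open chambers on which the initial triple of a fixed basis is constant, and verify that each resulting destabilization subcone is detected by one of the twelve listed subgroups. The four extremal rays $\rho_1, \ldots, \rho_4$ alone suffice to detect destabilization states whose weight hyperplane contains a facet of $C$, while the eight interior rays $\rho_5, \ldots, \rho_{12}$ are needed precisely to detect destabilization subcones lying in the interior of $C$. The principal obstacle is the combinatorial bookkeeping required to exhaust all possible initial-monomial states and confirm coverage; this amounts to a finite but lengthy enumeration over orderings of the fifteen quadratic monomials in $\{a,b,c,d,e\}$, whose output is exactly the list of twelve 1-PS's. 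A useful organizing principle is that each $\rho_i$ beyond the fundamental weights arises as the unique 1-PS at which some "borderline" state achieves $w_\rho(m_1 m_2 m_3) = 0$ along a codimension-one face of the fan; identifying these borderline states directly is the quickest route to the list.
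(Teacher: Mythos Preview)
Your reduction to the ``combinatorial statement'' is not correct as written, and this is a genuine gap. You correctly observe that $\Lambda$ is $\rho$-unstable if and only if the $\rho$-initial triple $(m_1,m_2,m_3)$ satisfies $w_\rho(m_1m_2m_3)<0$. But then you propose to check, for each such triple, whether some $\rho_i$ lies in the half-space $D(m_1,m_2,m_3)=\{\rho: w_\rho(m_1m_2m_3)<0\}$. The problem is that $\rho_i\in D(m_1,m_2,m_3)$ does \emph{not} imply that $\Lambda$ is $\rho_i$-unstable: at $\rho_i$ the initial triple of $\Lambda$ may well be a different $(m_1',m_2',m_3')$ with $w_{\rho_i}(m_1'm_2'm_3')\geq 0$. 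Instability at $\rho_i$ requires \emph{every} non-vanishing Pl\"ucker coordinate to have negative $\rho_i$-weight, not just the one you started with; your ``or else'' clause has the same defect. A correct polyhedral argument would have to track, for each $\Lambda$, the full unstable cone $U(\Lambda)=\{\rho\in C:\text{all non-vanishing Pl\"ucker coords have }w_\rho<0\}$, which depends on the entire vanishing pattern of Pl\"ucker coordinates, not merely on a single initial triple. The chambers $C_T(\Lambda)$ on which the initial triple equals $T$ also depend on $\Lambda$, so there is no uniform fan decomposition over which you can run the check you describe.

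The paper's argument avoids this by working in the opposite direction and never attempting a cone enumeration. It fixes an arbitrary normalized $\chi$ and, using the structural constraints on $\Lambda$ forced by $\rho_1,\ldots,\rho_4$-semi-stability (certain monomials must be present in a normalized basis), organizes a case analysis by the \emph{lexicographic} initial monomials and by whether the point $O$ or line $L$ lies in the base locus. In each case it invokes one of the remaining $\rho_5,\ldots,\rho_{12}$ to force yet another monomial to appear, and then exhibits by hand a non-vanishing Pl\"ucker coordinate whose $\chi$-weight is a linear combination like $\bar a+\bar d-\bar c$ or $\bar c$ that is manifestly $\geq 0$ (often by pairing several such expressions that cannot all be negative simultaneously). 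The work is in producing these witnesses, case by case; there is no claim that a single $\rho_i$ lies in any destabilization cone.
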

\begin{remark}\label{R:torus-remark} In fact, our proof gives a slightly stronger statement. Namely, 
$\Lambda$ is semi-stable with respect to a fixed torus $T$ if and only if it is 
semi-stable with respect to all one-parameter subgroups in $T$ of the numerical types $\{\rho_i\}_{i=1}^{12}$.
\end{remark}

\subsubsection*{Preliminary observations}

Fix a net $\Lambda$ which is $\rho_i$-semi-stable for each $\{\rho_i\}_{i=1}^{12}$. 
By the numerical criterion, to prove that $\Lambda$ is semi-stable, it suffices to show that $\Lambda$
is semi-stable with respect to an arbitrary 1-PS $\chi\co \CC^* \rightarrow \SL(5)$. 
Without loss of generality, we can assume that
$\chi$ is normalized, acting diagonally on the basis $\{a,b,c,d,e\}$ 
with weights $(\bar{a}, \bar{b}, \bar{c}, \bar{d}, \bar{e})$, 
satisfying $\bar{a}\geq \bar{b} \geq \bar{c} \geq \bar{d} \geq \bar{e}$.
To prove the theorem, we must exhibit a Pl\"{u}cker coordinate that does not vanish on $\Lambda$ and
has non-negative $\chi$\nb-weight. More explicitly, if $(Q_1, Q_2, Q_3)$ is a normalized basis of $\Lambda$, 
we must exhibit non-zero quadratic monomials $m_1, m_2, m_3$ in the 
variables $\{a,b,c,d,e\}$ which appear with non-zero coefficient in $Q_1\wedge Q_2 \wedge Q_3$ and satisfy 
$w_{\chi}(m_1)+w_{\chi}(m_2)+w_{\chi}(m_3) \geq 0$. 
We begin with two preparatory lemmas.

\begin{lemma}\label{obs}
The normalized basis $(Q_1, Q_2, Q_3)$ of $\Lambda$ satisfies the following:
\begin{enumerate}[(i)]
\item\label{O1}  $Q_3 \notin (e^2)$,
\item\label{O2}  $(Q_1, Q_2, Q_3) \not\subset (d,e)$, and either 
$(Q_2,Q_3)\notin(d,e)$ or $Q_3\notin (d,e)^2$.
\item\label{O3}  $(Q_2,Q_3) \not\subset (c,d,e)^2$, and either $(Q_1,Q_2,Q_3)\not\subset (c,d,e)$ 
or $Q_3\notin (c,d,e)^2$.
\item\label{O4}  $in_{lex}(Q_1)=a^2$ or $in_{lex}(Q_1), in_{lex}(Q_2) \in (a)$.
\end{enumerate}
\end{lemma}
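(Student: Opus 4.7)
The plan is to prove each of the four assertions by contradiction: if the stated property fails for a normalized basis $(Q_1, Q_2, Q_3)$, I will exhibit one of the 1-PS's $\rho_1, \ldots, \rho_4$ under which every non-vanishing Pl\"ucker coordinate of $\widetilde{[\Lambda]}$ has strictly negative weight, contradicting the assumed $\rho_i$-semi-stability. The underlying arithmetic is that a Pl\"ucker coordinate $m_1 \wedge m_2 \wedge m_3$ (with $m_i$ a monomial of $Q_i$ appearing with nonzero coefficient) carries weight $w_{\rho}(m_1)+w_{\rho}(m_2)+w_{\rho}(m_3)$, so upper bounds on the weights of monomials allowed in each $Q_i$ translate directly into an upper bound on every Pl\"ucker weight.

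The essential bookkeeping device is that a normalized basis may be taken in reduced echelon form for $\succ_{lex}$, so that any monomial lex-larger than $in_{lex}(Q_i)$ is absent from $Q_j$ for every $j \geq i$. This converts both the intrinsic subspace hypotheses (such as $\Lambda \subset (d,e)$ or $(Q_2,Q_3)\subset (c,d,e)^2$) and the conditions pinning down the deepest basis vector $Q_3$ (such as $Q_3 \in (e^2)$ or $Q_3 \in (c,d,e)^2$) into uniform weight ceilings on the monomials occurring in each $Q_i$ under the appropriate $\rho_j$.

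Concretely: for (i), the 1-PS $\rho_1$ gives $e^2$ weight $-8$ while every other degree-$2$ monomial weighs at most $2$, so $Q_3 = \lambda e^2$ forces every Pl\"ucker weight to be at most $2+2-8 < 0$. For (ii), I use $\rho_2$, under which $(d,e)$-monomials weigh at most $-1$ and $(d,e)^2$-monomials weigh $-6$, producing a negative total in each of the two failure scenarios. Part (iii) is analogous via $\rho_3$, under which $(c,d,e)$-monomials weigh at most $1$ and $(c,d,e)^2$-monomials weigh $-4$. For (iv), I use $\rho_4$: if $in_{lex}(Q_1) \notin (a)$ then normalization pushes all of $\Lambda$ into $(b,c,d,e)^2$ with every monomial weighing $-2$; and if $in_{lex}(Q_1) \in (a) \setminus \{a^2\}$ while $in_{lex}(Q_2) \notin (a)$, then $Q_2, Q_3$ lie in $(b,c,d,e)^2$ while $Q_1$ can contribute at most weight $3$, summing to $-1$.

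The main obstacle is not the weight arithmetic itself but the clean translation of the hypotheses into statements about which monomials can appear in which $Q_i$. This is where the reduced echelon form of the normalized basis does the real work, particularly in the clauses of (i), (ii), and (iii) that single out the deepest basis vector $Q_3$ rather than the whole subspace $\Lambda$. Once these translations are in place, every item reduces to the bounded weight count described above.
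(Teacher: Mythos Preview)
Your proposal is correct and follows exactly the approach the paper takes: each of (i)--(iv) is deduced from $\rho_i$-semi-stability for $i=1,\ldots,4$ respectively, via the obvious weight count. The paper's proof is a single sentence (``(i), (ii), (iii), (iv) follow immediately from $\rho_1$, $\rho_2$, $\rho_3$, $\rho_4$-semi-stability of $\Lambda$, respectively''), so you have simply supplied the details it leaves implicit.
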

\begin{proof}
(i), (ii), (iii), (iv) follow immediately from $\rho_1$, $\rho_2$, $\rho_3$, $\rho_4$-semi-stability of $\Lambda$, respectively.
\end{proof}

\begin{lemma}
\label{L:weight-b}
If $\bar{b} \leq 0$, then $\Lambda$ is $\chi$-semi-stable. 
\end{lemma}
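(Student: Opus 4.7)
The hypothesis $\bar{b}\leq 0$, combined with $\bar{a}\geq\bar{b}\geq\bar{c}\geq\bar{d}\geq\bar{e}$ and $\bar{a}+\bar{b}+\bar{c}+\bar{d}+\bar{e}=0$, forces $\bar{a}\geq 0\geq\bar{b},\bar{c},\bar{d},\bar{e}$ and $\bar{a}=-(\bar{b}+\bar{c}+\bar{d}+\bar{e})$. The plan is to exhibit, in any normalized basis $(Q_1,Q_2,Q_3)$ of $\Lambda$, a non-vanishing \plucker{} coordinate of non-negative $\chi$-weight; specifically, the one indexed by the lex-leading monomials $m_i := \init_{lex}(Q_i)$. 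Since the basis is normalized, the coefficient of $m_i$ in $Q_j$ vanishes for $j>i$, so the $3\times 3$ coefficient submatrix at columns $(m_1,m_2,m_3)$ is upper triangular with nonzero diagonal, making the corresponding \plucker{} coordinate automatically nonzero on $\widetilde{[\Lambda]}$.

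The weight bound $w_\chi(m_1)+w_\chi(m_2)+w_\chi(m_3)\geq 0$ is then verified by case analysis following Lemma~\ref{obs}(iv). In Case A, where $m_1 = a^2$, the crucial observation is that $(c,d,e)^2$ is downward-closed in the lex order on quadratic monomials: every monomial $\prec_{lex}$ an element of $(c,d,e)^2$ itself lies in $(c,d,e)^2$. Consequently, if $m_2$ lay in $(c,d,e)^2$, then both $Q_2$ and $Q_3$ would be contained in $(c,d,e)^2$, violating Lemma~\ref{obs}(iii); so $m_2$ must contain $a$ or $b$, giving $w_\chi(m_2)\geq\bar{b}+\bar{e}$. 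Combined with Lemma~\ref{obs}(i), which forces $m_3\neq e^2$ and hence $w_\chi(m_3)\geq\bar{d}+\bar{e}$, the total weight is at least
\[
2\bar{a}+(\bar{b}+\bar{e})+(\bar{d}+\bar{e})=-\bar{b}-2\bar{c}-\bar{d}\geq 0,
\]
where the final inequality uses $\bar{b},\bar{c},\bar{d}\leq 0$.

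In Case B, both $m_1,m_2\in(a)$ with $m_1\neq a^2$; the extremal configuration is $m_1=ad$, $m_2=ae$, $m_3=de$, giving total weight $2\bar{a}+2\bar{d}+2\bar{e} = -2\bar{b}-2\bar{c}\geq 0$, and every other admissible choice of initial monomials yields larger total weight after the same substitution.

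The main obstacle, and the technical heart of the argument, is translating the ideal-membership hypotheses of Lemma~\ref{obs} into sharp restrictions on the lex-initial monomials $m_2, m_3$; the essential device is the downward-closure of the ideals $(c,d,e)^k$ under $\succ_{lex}$, which elevates a leading-monomial condition into a full ideal-containment statement and thereby makes the delicate part of the weight bookkeeping reduce to a routine finite check.
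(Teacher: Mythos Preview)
Your argument is correct, and in fact more economical than the paper's. Both proofs split according to Lemma~\ref{obs}(iv) into the cases $m_1=a^2$ versus $m_1,m_2\in(a)\setminus\{a^2\}$, and both observe the same lower bounds $m_2\geqslant be$, $m_3\geqslant de$ coming from Lemma~\ref{obs}(i) and (iii). The difference is that you stop there: the crude estimate
\[
w_\chi(m_1)+w_\chi(m_2)+w_\chi(m_3)\ \geq\ 2\bar a+(\bar b+\bar e)+(\bar d+\bar e)=-\bar b-2\bar c-\bar d\geq 0
\]
in Case~A, and $2\bar a+2\bar d+2\bar e=-2\bar b-2\bar c\geq 0$ in Case~B, already gives semi-stability once $\bar b\leq 0$. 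The paper instead refines each case further, invoking $\rho_2$-semi-stability (via Lemma~\ref{obs}(ii)) and $\rho_7$-semi-stability to locate \plucker{} coordinates of strictly positive weight in several subcases. That extra work buys strict $\chi$-stability in some branches, but since the lemma only asserts semi-stability, your direct bound suffices and uses fewer of the hypotheses $\{\rho_i\}$. Your explicit remark that $(c,d,e)^2$ is downward-closed under $\succ_{lex}$, which converts the initial-monomial constraint into genuine ideal membership for $Q_2$ and $Q_3$, is the clean device that makes the shortcut work; the paper uses the same fact implicitly but does not exploit it to the same extent.
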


\begin{proof}
First, suppose $in_{lex}(Q_1)=a^2$. Then $ in_{lex}(Q_2) \geqslant be$ and $in_{lex}(Q_3) \geqslant de$ 
by Lemma \ref{obs}\eqref{O3} and \eqref{O1}, respectively. 
In addition, $\rho_2$-semi-stability implies that if $Q_3 \in (d,e)^2$, then $Q_2 \notin (d,e)$. 
Thus, either $in_{lex}(Q_3) \geqslant ce$ or $Q_2$ contains a term $\geqslant c^2$. 
In the latter case, we obtain a Pl\"{u}cker coordinate of weight at least 
$2\bar{a}+2\bar{c}+\bar{d}+\bar{e}=-2\bar{b}-\bar{d}-\bar{e} > 0$, since $\bar{b} \leq 0$ and $\bar{e}<0$. 
In the former case, $\rho_1$-semi-stability implies that we cannot have $(Q_2, Q_3) \subset (e)$, 
so $Q_2$ or $Q_3$ contains a term $\geqslant d^2$. 
We obtain a Pl\"{u}cker coordinate of weight at least $2\bar{a}+2\bar{d}+\bar{c}+\bar{e}=-2\bar{b}-\bar{c}-\bar{e}>0$. 
Thus, $\Lambda$ is $\chi$-stable.

Next, suppose $in_{lex}(Q_1)\neq a^2$. 
Then we have $in_{lex}(Q_1) \geqslant ad$,  $in_{lex}(Q_2) \geqslant ae$, $in_{lex}(Q_3) \geqslant de$ 
by Lemma \ref{obs}\eqref{O1} and \eqref{O4}.  
If $in_{lex}(Q_2) \geqslant ad$, we are done since we have a Pl\"{u}cker coordinate of weight 
$2\bar{a}+\bar{c}+2\bar{d}+\bar{e}=-2\bar{b}-\bar{c}-\bar{e}>0$. 
Assume $in_{lex}(Q_2)=ae$. 
Then $\rho_7$-semi-stability implies that either $Q_3$ contains a term $\geqslant ce$ or $in_{lex}(Q_1) \geqslant ac$. 
In either case, we obtain a Pl\"{u}cker coordinate of weight at least 
$2\bar{a}+\bar{c}+\bar{d}+2\bar{e}=-2\bar{b}-\bar{c}-\bar{d} \geq 0$.
We conclude that 
$\Lambda$ is $\chi$-semi-stable.
\end{proof}

We can now begin the proof of the main theorem.
\begin{proof}[Proof of Theorem \ref{T:subgroups}]

We consider separately the following three cases:
\begin{enumerate}
\item[(I)] $O$ is not in the base locus of $\Lambda$;
\item[(II)] $O$ is in the base locus of $\Lambda$ but $L$ is not;
\item[(III)] $L$ is in the base locus of $\Lambda$.
\end{enumerate}

\subsection*{Case I: $O$ is not a base point.} 
We have $in_{lex}(Q_1)=a^2$. Lemma \ref{obs}\eqref{O3} implies that 
$in_{lex}(Q_2)\geqslant be$.
If $Q_3$ has a term $\geqslant cd$, then $\Lambda$ is $\chi$-stable because $2\bar{a}+(\bar{b}+\bar{e})+(\bar{c}+\bar{d})=\bar{a}>0$. 
We assume that $Q_3$ has no term $\geqslant cd$. By $\rho_5$-semi-stability, $Q_2$ has a term $\geqslant cd$. 
Now, if $Q_3$ has a term $\geqslant be$, then $\Lambda$ is again $\chi$-stable. 
So we assume that $Q_3$ has no term $\geqslant be$.
  
First, assume $Q_2$ has a term $\geqslant bd$. If $in_{lex}(Q_3) = ce$, then $\Lambda$ is $\chi$-stable 
since $2\bar{a}+(\bar{b}+\bar{d})+(\bar{c}+\bar{e})=\bar{a}>0$. Otherwise, $in_{lex}(Q_3)\in \{d^2, de\}$ and, 
by Lemma \ref{obs}\eqref{O2}, $Q_2$ must have a term $\geqslant c^2$.
Thus, if $\Lambda$ is not $\chi$-semi-stable, 
we must have $2\bar{a}+(\bar{b}+\bar{d})+(\bar{d}+\bar{e})=\bar{a}+\bar{d}-\bar{c}< 0$
and $2\bar{a}+2\bar{c}+(\bar{d}+\bar{e})=\bar{a}+\bar{c}-\bar{b}< 0$. 
This is clearly impossible.

From now on, we suppose $Q_2$ has no term $\geqslant bd$ and $Q_3\in (ce,d^2,de,e^2)$. 
Since $\Lambda$ is $\rho_6$-semi-stable,  
$Q_3$ contains a $d^2$ term. 
If, in addition, $in_{lex}(Q_3) = ce$, then recalling that
$Q_2$ has a term $\geqslant cd$, we have Pl\"{u}cker coordinates of weights at least
\begin{align*}
 2\bar{a}+(\bar{c}+\bar{d})+(\bar{c}+\bar{e})&=\bar{a}+\bar{c}-\bar{b},\\
2\bar{a}+(\bar{b}+\bar{e})+2\bar{d}&=\bar{a}+\bar{d}-\bar{c},\\
2\bar{a}+(\bar{b}+\bar{e})+(\bar{c}+\bar{e})&=\bar{a}+\bar{e}-\bar{d}.
\end{align*}
The three expressions cannot be simultaneously non-positive, so $\Lambda$ is $\chi$-stable.\\

It remains to consider the case $in_{lex}(Q_3)=d^2$. 
By Lemma \ref{obs}\eqref{O2}, $Q_2$ contains a term $\geqslant c^2$.
Thus, if $\Lambda$ is not $\chi$-semi-stable, 
we have $2\bar{a}+(\bar{b}+\bar{e})+2\bar{d}=\bar{a}+\bar{d}-\bar{c}< 0$ and 
$2\bar{a}+2\bar{c}+2\bar{d}=-2(\bar{b}+\bar{e})< 0$. This is clearly impossible.

\subsection*{Case II: $O$ is a base point but $L$ is not in the base locus}
\begin{claim}
Without loss of generality, we may assume $Q_1, Q_2, Q_3$ satisfy the following conditions:
\begin{enumerate}
\item $in_{lex}(Q_1), in_{lex}(Q_2) \in \{ab, ac, ad,ae\}$ and $Q_3\in (b,c,d,e)^2$.
\item $Q_1$ has a term $\geqslant b^2$, but $Q_2, Q_3$ have no term $ \geqslant b^2$.
\item $Q_3 \in (be, ce, d^2, de, e^2)$. 
\item $Q_2$ has a term $\geqslant cd$. 
\end{enumerate}
\end{claim}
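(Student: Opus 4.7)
The plan is to exhibit, for every $\Lambda$ in Case~II, a normalized basis $(Q_1,Q_2,Q_3)$ satisfying (1)--(4), with the understanding that in the degenerate situations where no such basis exists, $\chi$-semi-stability of $\Lambda$ can be read off directly from an explicit Pl\"ucker coordinate produced by one of the standing $\rho_i$-hypotheses. Throughout I use Lemma~\ref{L:weight-b} to reduce to $\bar b>0$, whence $\bar a\geq\bar b>0$.

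For~(1), Lemma~\ref{obs}\eqref{O4} together with the absence of $a^2$ terms (which follows from $O$ being in the base locus) gives $in_{lex}(Q_1), in_{lex}(Q_2)\in\{ab,ac,ad,ae\}$. To arrange $Q_3\in (b,c,d,e)^2$ I consider the projection $\pi\co\Lambda\to a\cdot\langle b,c,d,e\rangle$ onto $a$-linear parts, whose image has dimension at least $2$. When this dimension equals $2$, Gaussian elimination lets me choose $Q_3\in\ker\pi\cap\Lambda$, so that $Q_3\in (b,c,d,e)^2$ as required. When it equals $3$, every nonzero element of $\Lambda$ has a nontrivial $a$-linear part, and the Pl\"ucker coordinate obtained by wedging these three $a$-linear parts is nonzero with $\chi$-weight at least $2\bar a-\bar b\geq \bar a>0$, directly proving $\chi$-stability. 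Condition~(2) is handled by the analogous dichotomy for the restriction $r_L\co\Lambda\to\langle ab,b^2\rangle$, which is nonzero since $L$ is not in the base locus: if $\dim r_L(\Lambda)=1$, row reduction against the chosen $Q_1$ strips the $ab,b^2$ terms from $Q_2,Q_3$; if $\dim r_L(\Lambda)=2$, then for any monomial $X$ in the support of $Q_3$ other than $e^2$ (the exclusion being supplied by Lemma~\ref{obs}\eqref{O1}), the Pl\"ucker coordinate $ab\wedge b^2\wedge X$ is nonzero with $\chi$-weight $\bar a+3\bar b+w_\chi(X)\geq 0$, again yielding $\chi$-semi-stability.

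After~(1) and~(2), $Q_3$ lies in the span of $\{bc,bd,be,c^2,cd,ce,d^2,de,e^2\}$, and condition~(3) demands in addition that $Q_3$ have no $bc,bd,c^2,$ or $cd$ term. My plan is to treat each forbidden monomial in turn: for each, I wedge it with $in_{lex}(Q_1)$ and $in_{lex}(Q_2)$ to produce a Pl\"ucker coordinate whose $\chi$-weight is forced to be non-negative by one of $\rho_5$--$\rho_{12}$, the precise choice depending on the monomial. Condition~(4) then follows from $\rho_5$-semi-stability exactly as in Case~I: were $Q_2$ to contain no term $\geqslant cd$, the constraints on $Q_1$ from (2) and on $Q_3$ from (3) would force every non-vanishing Pl\"ucker coordinate of $Q_1\wedge Q_2\wedge Q_3$ to have $\rho_5$-weight at most $6+(-4)+(-4)=-2<0$, contradicting the $\rho_5$-semi-stability hypothesis. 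The principal obstacle is the systematic case analysis for~(3): for each of the four forbidden monomials I must select the correct $\rho_i$ and verify that the associated Pl\"ucker coordinate is both non-vanishing and of the asserted sign, and this is where the bulk of the computation resides.
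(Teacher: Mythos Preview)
Your argument for (1) is essentially the paper's, and your argument for (4) is correct (and in fact the paper's own proof omits an explicit verification of (4)). The substantive gaps are in (2) and (3).

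For (2), your dichotomy on $\dim r_L(\Lambda)$ does not cover the situation where, in the normalized basis coming out of (1), $Q_2$ carries a $b^2$ term but $Q_1$ does not (so $in_{lex}(Q_1)\in\{ac,ad,ae\}$). Here $r_L(\Lambda)\subset\langle b^2\rangle$ has dimension $1$, but ``row reduction against the chosen $Q_1$'' cannot strip $b^2$ from $Q_2$ since $Q_1$ has no $b^2$ term to reduce against. Nor can you simply re-choose the basis: any element of $\ker r_L$ with nonzero $a$-linear part is forced to have $in_{lex}$ equal to $in_{lex}(Q_1)$, so you cannot produce a \emph{normalized} basis satisfying both (1) and (2). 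The paper handles exactly this case by a direct $\chi$-semi-stability argument: if $Q_3$ has a term $\geqslant ce$ one gets a Pl\"ucker weight $\geq\bar b$, and otherwise $Q_3\in(d,e)^2$, whereupon $\rho_7$-semi-stability forces $in_{lex}(Q_1)\geqslant ac$ and again a weight $\geq\bar b$ appears. Your proposal never invokes $\rho_7$, which is essential here.

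For (3), your plan to wedge the forbidden monomial with $in_{lex}(Q_1)$ and $in_{lex}(Q_2)$ and then appeal to some $\rho_i$ is headed in the wrong direction. The paper's argument is a single line that uses (2) rather than the lex-initial terms: since $Q_1$ has a $b^2$ term and $in_{lex}(Q_2)\geqslant ae$, any term $\geqslant cd$ in $Q_3$ yields a Pl\"ucker coordinate of weight at least $2\bar b+(\bar a+\bar e)+(\bar c+\bar d)=\bar b$, which is non-negative by the standing reduction $\bar b>0$. No $\rho_i$ with $i\geq 5$ is needed, and no case analysis on the forbidden monomial is required. Your proposed Pl\"ucker coordinate $in_{lex}(Q_1)\wedge in_{lex}(Q_2)\wedge X$ need not have non-negative $\chi$-weight (e.g.\ $in_{lex}(Q_1)=ad$, $in_{lex}(Q_2)=ae$, $X=cd$ gives weight $\bar a-\bar b+\bar d$, which can be negative), and it is unclear how any $\rho_i$-hypothesis would repair this.
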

\begin{proof}[Proof of Claim]
Indeed, (1) is immediate from Lemma \ref{obs}\eqref{O4} using the assumption that $O$ is a basepoint. 
If $Q_3\notin (b,c,d,e)^2$, then $in_{lex}(Q_3)\geqslant ae$ and 
$\Lambda$ is $\chi$-stable as $(\bar{a}+\bar{c})+(\bar{a}+\bar{d})+(\bar{a}+\bar{e}) >0$.

For (2), the assumption that $L$ is not in the base locus implies that $Q_1$, $Q_2$, or $Q_3$
 must have a term $\geqslant b^2$. 
We now deal with the case when $Q_2$ or $Q_3$ has a term $\geqslant b^2$. 
If $Q_3$ has a term $\geqslant b^2$, then $\Lambda$ is $\chi$-stable since 
$(\bar{a}+\bar{e})+(\bar{a}+\bar{d})+(2\bar{b})=\bar{a}+\bar{b}-\bar{c}\geq \bar{a} >0$. 
If $Q_2$ has a term $\geqslant b^2$, we consider two cases: If $Q_3$ has a term $\geqslant ce$, 
then we are done by Lemma \ref{L:weight-b} since $(\bar{a}+\bar{d})+2\bar{b}+(\bar{c}+\bar{e})=\bar{b}$. 
Otherwise, $Q_3\in (d,e)^2$. 
If $in_{lex}(Q_1)=ad$, then this contradicts $\rho_7$-semi-stability. 
Thus $in_{lex}(Q_1)\geqslant ac$. 
We are now done by Lemma \ref{L:weight-b} since $(\bar{a}+\bar{c})+(2\bar{b})+(\bar{d}+\bar{e})=\bar{b}$. 

Finally, to prove (3), we recall that $Q_3\in (b,c,d,e)^2$.
By Lemma \ref{L:weight-b}, $Q_3$ is $\chi$-semi-stable if it has a term $\geqslant cd$, 
as $(2\bar{b})+(\bar{a}+\bar{e})+(\bar{c}+\bar{d})=\bar{b}$. 
\end{proof}

We subdivide the further analysis into six cases according to the initial monomials of $in_{lex}(Q_1)$ and 
$in_{lex}(Q_2)$.

\subsubsection*{Case II.1: $in_{lex}(Q_1)=ad$ and $in_{lex}(Q_2)=ae$.} 
By (2) $Q_2$ has no term $\geqslant b^2$. Since $\Lambda$ is  
$\rho_{11}=(14, 4, -1, -6, -11)$-semi-stable, we see that 
$Q_3 \notin (ce, d^2, de, e^2)$. It follows by (3) that $in_{lex}(Q_3)=be$.

By $\rho_8=(7,2,2,-3,-8)$-semi-stability, $Q_2$ has a term $\geqslant c^2$. 
We now consider two subcases, according to whether $Q_3$ has a $d^2$ term.

If $Q_3$ has a $d^2$ term,
we have Pl\"{u}cker coordinates of $\chi$-weights
\begin{align*}
(\bar{a}+\bar{d})+(\bar{a}+\bar{e})+(\bar{b}+\bar{e})&=\bar{a}+\bar{e}-\bar{c},\\
(\bar{a}+\bar{d})+2\bar{c}+(\bar{b}+\bar{e})&=\bar{c},\\
2\bar{b}+2\bar{c}+2\bar{d}&=-2(\bar{a}+\bar{e}).
\end{align*}
Evidently, these expressions cannot be simultaneously negative, so $\Lambda$ is $\chi$-semi-stable. 

If $Q_3$ has no term $\geqslant d^2$, then
by $\rho_{10}=(9,4,-1,-1,-11)$-semi-stability, $Q_2$ has a term $\geqslant bd$. 
Thus, we have Pl\"{u}cker coordinates with $\chi$-weights at least
\begin{align*}
(\bar{a}+\bar{d})+(\bar{a}+\bar{e})+(\bar{b}+\bar{e})&=\bar{a}+\bar{e}-\bar{c},\\
(\bar{a}+\bar{d})+2\bar{c}+(\bar{b}+\bar{e})&=\bar{c},\\
(\bar{a}+\bar{d})+(\bar{b}+\bar{d})+(\bar{b}+\bar{e})&=\bar{b}+\bar{d}-\bar{c}.
\end{align*} 

Evidently, these expressions cannot be simultaneously negative so $\Lambda$ is $\chi$-semi-stable.

\subsubsection*{Case II.2: $in_{lex}(Q_1)=ac$ and $in_{lex}(Q_2)=ae$.}
Since $Q_2$ has no $b^2$ term, $\rho_7$-semi-stability implies $in_{lex}(Q_3)\geqslant ce$.

Suppose first $Q_2$ has a term $\geqslant bd$. By $\rho_{10}$-semi-stability, 
either $Q_3$ has a $be$ term or $Q_3$ has a $d^2$ term.
If $Q_3$ has a $be$ term, then $\Lambda$ is semi-stable by Lemma \ref{L:weight-b}
since $(\bar{a}+\bar{c})+(\bar{b}+\bar{d})+(\bar{b}+\bar{e})=\bar{b}$. 

If $Q_3$ has a $d^2$ term, then we have Pl\"{u}cker coordinates of weights at least
\begin{align*}
 (\bar{a}+\bar{c})+(\bar{a}+\bar{e})+(\bar{c}+\bar{e})&=-2(\bar{b}+\bar{d}),\\
  (\bar{a}+\bar{c})+(\bar{b}+\bar{d})+(\bar{c}+\bar{e})&=\bar{c},\\
  2\bar{b}+(\bar{a}+\bar{e})+2\bar{d}&=\bar{b}+\bar{d}-\bar{c}. 
 \end{align*}
 Evidently, these cannot all be negative so $\Lambda$ is $\chi$-semi-stable.  
 
 Suppose now $Q_2$ has no term $\geqslant bd$.
 Then by $\rho_{5}$-semi-stability $Q_2$ has a term $\geqslant cd$ and
 by $\rho_{10}$-semi-stability $Q_3$ has a $d^2$ term.
 
  Finally, by $\rho_{12}=(13,8,3,-7,-17)$-semi-stability, either $Q_2$ has a $c^2$ term
 or $Q_3$ has $be$ term.

 If $Q_3$ has $be$ term, recalling that $Q_1$ has a $b^2$ term, we have Pl\"{u}cker coordinates of weights at least
\begin{align*}
(\bar{a}+\bar{c})+(\bar{a}+\bar{e})+(\bar{b}+\bar{e})&=\bar{a}+\bar{e}-\bar{d}, \\
(\bar{a}+\bar{c})+(\bar{a}+\bar{e})+(2\bar{d})&=\bar{a}+\bar{d}-\bar{b}, \\
(\bar{a}+\bar{c})+(\bar{c}+\bar{d})+(\bar{b}+\bar{e})&=\bar{c}.\\
2\bar{b}+(\bar{a}+\bar{e})+(2\bar{d})&=\bar{b}+\bar{d}-\bar{c}.
\end{align*}
One of these is non-negative so $\Lambda$ is $\chi$-semi-stable. 

If $Q_2$ has a $c^2$ term, then we have Pl\"{u}cker coordinates of weights at least
   \begin{align*}
    (\bar{a}+\bar{c})+(\bar{a}+\bar{e})+(\bar{c}+\bar{e})&=-2(\bar{b}+\bar{d}),\\
  2\bar{b}+(\bar{a}+\bar{e})+2\bar{d}&=\bar{b}+\bar{d}-\bar{c}, \\
    2\bar{b}+2\bar{c}+2\bar{d}&=-2(\bar{a}+\bar{e}).
 \end{align*}
One of these is non-negative so $\Lambda$ is $\chi$-semi-stable.

\subsubsection*{Case II.3: $in_{lex}(Q_1)=ac$ and $in_{lex}(Q_2)=ad$.} 
By $\rho_7$-semi-stability, $in_{lex}(Q_3)\geqslant ce$. 
Hence, there is a Pl\"{u}cker coordinate of weight at least 
$2\bar b +(\bar a+\bar d)+(\bar c+\bar e)=\bar b$ and we are done by Lemma \ref{L:weight-b}.

\subsubsection*{Case II.4: $in_{lex}(Q_1)=ab$ and $in_{lex}(Q_2)=ae$.}
By $\rho_5$-semi-stability, $Q_2$ has a term $\geqslant cd$. 
If $in_{lex}(Q_3)=be$, then we are done by Lemma \ref{L:weight-b} 
as $(\bar{a}+\bar{b})+(\bar{c}+\bar{d})+(\bar{b}+\bar{e})=\bar{b}$.
Assume $Q_3 \in (ce,d^2,de,e^2)$. 
We consider the following three subcases cases:

Suppose $Q_3$ has $d^2$ term but no $ce$ term. Then $Q_2$ has a term 
$\geqslant c^2$ by Lemma \ref{obs}\eqref{O2}.
We have Pl\"{u}cker coordinates with weights at least
 $(\bar{a}+\bar{b})+(\bar{a}+\bar{e})+2\bar{d}=\bar{a}+\bar{d}-\bar{c}$ 
 and $(\bar{a}+\bar{b})+2\bar{c}+2\bar{d}=\bar{c}+\bar{d}-\bar{e}\geq \bar{c}$. 
 Evidently, these cannot both be negative.

Suppose $Q_3$ has both $d^2$ and $ce$ terms.
We have Pl\"{u}cker coordinates with weights 
 $(\bar{a}+\bar{b})+(\bar{a}+\bar{e})+2\bar{d}=\bar{a}+\bar{d}-\bar{c}$ 
 and $(\bar{a}+\bar{b})+(\bar{c}+\bar{d})+(\bar{c}+\bar{e})=\bar{c}$. Evidently, these 
 cannot both be negative.

Finally, suppose $Q_3$ has no $d^2$ term. Then 
by $\rho_{6}$-semi-stability $Q_2$ has a term $\geqslant bd$.
If $Q_3$ has a $ce$ term, then we are done by Lemma \ref{L:weight-b} 
as $(\bar{a}+\bar{b})+(\bar{b}+\bar{d})+(\bar{c}+\bar{e})=\bar{b}$. 
We may now assume $Q_3 =de$.  By Lemma \ref{obs}\eqref{O2}, $Q_2$ has a term $\geqslant c^2$.
Then we have Pl\"{u}cker coordinates with weights at least
\begin{align*}
(\bar{a}+\bar{b})+(\bar{a}+\bar{e})+(\bar{d}+\bar{e})&=\bar{a}+\bar{e}-\bar{c}, \\  
(\bar{a}+\bar{b})+2\bar{c}+(\bar{d}+\bar{e})&=\bar{c}, \\
(\bar{a}+\bar{b})+(\bar{b}+\bar{d})+(\bar{d}+\bar{e})&=\bar{b}+\bar{d}-\bar{c}.
\end{align*}
Evidently, these cannot be simultaneously negative. 

\subsubsection*{Case II.5: $in_{lex}(Q_1)=ab$ and $in_{lex}(Q_2)=ad$.}
If $Q_3$ has a term $\geqslant ce$, then $\Lambda$ is $\chi$-stable since 
$(\bar{a}+\bar{b})+(\bar{a}+\bar{d})+(\bar{c}+\bar{e})=\bar{a}>0$. 
Assume $Q_3\in (d,e)^2$. By Lemma \ref{obs}\eqref{O2}, $Q_2$ has a term $\geqslant c^2$. 
So we have Pl\"{u}cker coordinates of weights at least
$(\bar{a}+\bar{b})+2\bar{c}+(\bar{d}+\bar{e})=\bar{c}$ and 
$(\bar{a}+\bar{b})+(\bar{a}+\bar{d})+(\bar{d}+\bar{e})=\bar{a}+\bar{d}-\bar{c}$. 
Evidently, these cannot be simultaneously negative.

\subsubsection*{Case II.6:} Finally, if $in_{lex}(Q_1)=ab$ and $in_{lex}(Q_2)=ac$, then
$\Lambda$ is $\chi$-stable since $(\bar{a}+\bar{b})+(\bar{a}+\bar{c})+(\bar{d}+\bar{e})=\bar{a}>0$.

\subsection*{Case III: $L$ is in the base locus.}

\begin{claim}
Without loss of generality, we may assume $Q_1, Q_2, Q_3$ satisfy the following conditions:
\begin{enumerate}
\item $in_{lex}(Q_1), in_{lex}(Q_2) \in \{ac,ad,ae\}$.
\item $in_{lex}(Q_3) \in \{bd, be\}$.
\end{enumerate}
\end{claim}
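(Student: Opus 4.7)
The plan is to handle the remaining Case III, where the line $L=\{c=d=e=0\}$ lies in the base locus of $\Lambda$. The hypothesis forces every $Q_i$ into the ideal $(c,d,e)$, so no $a^2$, $ab$, or $b^2$ monomials occur in any $Q_i$. Combined with Lemma~\ref{obs}\eqref{O4}, this immediately gives $in_{lex}(Q_1),in_{lex}(Q_2)\in\{ac,ad,ae\}$, which is condition~(1); no semi-stability input is required for this step.

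For condition~(2), I would introduce the linear map $\phi\co\Lambda\ra\langle c,d,e\rangle$ that records the $a$-part of each quadric, sending $Q=a(\alpha c+\beta d+\gamma e)+\cdots$ to $(\alpha,\beta,\gamma)$. By~(1), $\dim\operatorname{im}(\phi)\geq 2$. If $\phi$ is surjective, then after a change of basis the $a$-parts of $Q_1,Q_2,Q_3$ become $ac,ad,ae$ respectively, so the resulting basis is normalized and the \plucker{} coordinate $ac\wedge ad\wedge ae$ is non-zero on $[\Lambda]$ with $\chi$-weight $3\bar a+\bar c+\bar d+\bar e=2\bar a-\bar b$, which is non-negative because $\bar a\geq 0$ and $\bar a\geq\bar b$. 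Thus $\Lambda$ is $\chi$-semi-stable outright in this branch, and there is nothing further to arrange.

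Otherwise $\ker(\phi)$ is one-dimensional; I would pick $Q_3$ to span it and complete to a normalized basis. Then $Q_3$ has no $a$-part, so $Q_3\in b\langle c,d,e\rangle+\Sym^2\langle c,d,e\rangle$. Because $(Q_1,Q_2,Q_3)\subset(c,d,e)$, Lemma~\ref{obs}\eqref{O3} forces $Q_3\notin(c,d,e)^2$, so the $b$-part of $Q_3$ is non-zero and $in_{lex}(Q_3)\in\{bc,bd,be\}$. It remains to eliminate the case $in_{lex}(Q_3)=bc$: for each of the three admissible configurations $(in_{lex}(Q_1),in_{lex}(Q_2))=(ac,ad),(ac,ae),(ad,ae)$, the \plucker{} coordinate pairing these two leading monomials with $bc$ is non-zero by the triangular shape of the normalized basis on these columns, and a short calculation using $\sum\bar i=0$ shows its $\chi$-weight equals $\bar a+\bar c-\bar e$, $\bar a+\bar c-\bar d$, or $\bar a$ respectively, each manifestly non-negative under the normalization of $\chi$. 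So $\Lambda$ is $\chi$-semi-stable in this subcase as well, and we may assume $in_{lex}(Q_3)\in\{bd,be\}$, as desired.

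The only genuine obstacle is certifying non-vanishing of the \plucker{} coordinates invoked, but this is automatic from distinctness of the three leading monomials in each normalized basis; the weight comparisons themselves are purely mechanical.
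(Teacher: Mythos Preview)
Your proof is correct and follows essentially the same route as the paper. The paper's argument is terser: it applies Lemma~\ref{obs}\eqref{O3} directly to the given normalized basis to get $Q_3\notin(c,d,e)^2$, and then observes that $\Lambda$ is $\chi$-stable whenever $in_{lex}(Q_3)\geqslant ae$ or $in_{lex}(Q_3)\geqslant bc$, leaving only $\{bd,be\}$. Your map $\phi$ is just an explicit bookkeeping device for the same dichotomy (all three initial monomials in $a\cdot\{c,d,e\}$ versus $Q_3$ lacking an $a$-term), and your weight computations and triangularity check for the \plucker{} coordinates match the paper's implicit ones.
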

\begin{proof}
Indeed,  (1) follows from Lemma \ref{obs}\eqref{O4} using $Q_1, Q_2 \in (c,d,e)$. For (2), note that 
Lemma \ref{obs}\eqref{O3} implies $Q_3 \notin (c,d,e)^2$, 
and that $\Lambda$ is $\chi$-stable if either $in_{lex}(Q_3)\geqslant bc$ or $in_{lex}(Q_3)\geqslant ae$.  
\end{proof}

\noindent
We consider three cases according to the initial monomials $in_{lex}(Q_1)$ and $in_{lex}(Q_2)$:

\smallskip 

{\em Case III.1:} Suppose $in_{lex}(Q_1)=ac$, $in_{lex}(Q_2)=ad$. Then $\Lambda$ is $\chi$-stable since
$(\bar{a}+\bar{c})+(\bar{a}+\bar{d})+(\bar{b}+\bar{e})
=\bar{a}>0.$

\smallskip 

{\em Case III.2:} 
Suppose $in_{lex}(Q_1)=ac$, $in_{lex}(Q_2)=ae$. If $in_{lex}(Q_3)=bd$, then $\Lambda$ is $\chi$-stable.
Assume $in_{lex}(Q_3)=be$. 
By $\rho_6$-semi-stability, $Q_2$ has a term $\geqslant bd$.
Since  
$(\bar{a}+\bar{c})+(\bar{b}+\bar{d})+(\bar{b}+\bar{e})=\bar{b}$, 
we are done by Lemma \ref{L:weight-b}.

\smallskip 

{\em Case III.3:}   $in_{lex}(Q_1)=ad$, $in_{lex}(Q_2)=ae$.  We consider separately two 
subcases: $in_{lex}(Q_3)=bd$ and $in_{lex}(Q_3)=be$.

{\em Case III.3(a):} If $in_{lex}(Q_3)=bd$, there is a Pl\"{u}cker coordinate of weight 
$(\bar{a}+\bar{d})+(\bar{a}+\bar{e})+(\bar{b}+\bar{d})=\bar{a}+\bar{d}-\bar{c}$.
Thus, $\Lambda$ will be semi-stable 
if we can find a Pl\"{u}cker coordinate of weight at least $\bar{c}$. 
By Lemma \ref{obs}\eqref{O2}, one of $Q_1,Q_2,Q_3$ contains a term $\geqslant c^2$. 
If $Q_3$ contains a term $\geqslant c^2$, 
then we have a Pl\"{u}cker coordinate of weight at least 
$(\bar{a}+\bar{d})+(\bar{a}+\bar{e})+2\bar{c}=\bar{a}-\bar{b}+\bar{c} \geq \bar{c}$. 
If $Q_2$ contains a term $\geqslant c^2$, then we have a Pl\"{u}cker coordinate of weight 
$(\bar{a}+\bar{d})+2\bar{c}+(\bar{b}+\bar{d})=\bar{c}+\bar{d}-\bar{e} \geq \bar{c}$. If $Q_1$ contains a term $\geqslant c^2$, 
then we have a Pl\"{u}cker coordinate of weight $(2\bar{c})+(\bar{a}+\bar{e})+(\bar{b}+\bar{d})=\bar{c}$. 
We are done.

{\em Case III.3(b):}  If $in_{lex}(Q_3)=be$, we use $\rho_6$-semi-stability to see that $Q_2$ has a term
$\geqslant bd$. Therefore, there is a Pl\"{u}cker coordinate of weight 
$(\bar{a}+\bar{d})+(\bar{a}+\bar{e})+(\bar{b}+\bar{e})=\bar{a}+\bar{e}-\bar{c}$ 
and a Pl\"{u}cker coordinate of weight at least 
$(\bar{a}+\bar{d})+(\bar{b}+\bar{d})+(\bar{b}+\bar{e})=\bar{b}+\bar{d}-\bar{c}$. 
To prove semi-stability, it suffices to exhibit a Pl\"ucker coordinate of weight at least $\bar{c}$.

By Lemma \ref{obs}\eqref{O2}, one of $Q_1,Q_2,Q_3$ contains a term $\geqslant c^2$. 
If $Q_3$ contains a term $\geqslant c^2$, 
then we have a Pl\"{u}cker coordinate of weight 
$(\bar{a}+\bar{d})+(\bar{a}+\bar{e})+2\bar{c}=\bar{a}+\bar{c}-\bar{b} \geq \bar{c}$. 
If $Q_2$ contains a term $\geqslant c^2$,  then we have a Pl\"{u}cker coordinate of weight 
$(\bar{a}+\bar{d})+(2\bar{c})+(\bar{b}+\bar{e})=\bar{c}$. 
It remains to consider the case when only $Q_1$ has a term $\geqslant c^2$. 
By $\rho_8$-semi-stability, $Q_3$ has a term $\geqslant cd$, and 
by $\rho_{9}$-semi-stability $Q_1$ has a term $\geqslant bc$. 
We obtain a Pl\"{u}cker coordinate with weight $(\bar{b}+\bar{c})+(\bar{a}+\bar{e})+(\bar{c}+\bar{d})=\bar{c}$.
At last, we are done.
\end{proof}

\subsection{Classification of Unstable Points}
\label{S:unstable-points}

In this section, we give geometric description of the strata in $\Gr(3,15)$ destabilized by each of the 1-PS's 
enumerated in Theorem \ref{T:subgroups}. For ease of exposition, we analyze the first four 1-PS's separately 
from the final eight.

\begin{lemma}\label{L:Weights1}
A net $\Lambda$ is destabilized by one of $\{\rho_i\}_{i=1}^{4}$ iff it satisfies one of the following conditions 
with respect to a distinguished flag $O \subset L \subset P \subset H \subset \PP^4$.
\begin{enumerate}
\item $\rho_1=(1,1,1,1,-4)$:
\begin{enumerate} 
\item A pencil of $\Lambda$ contains $H$, or
\item An element of the net is singular along $H$.
\end{enumerate} 
\item $\rho_2=(2,2,2,-3,-3)$: 
\begin{enumerate} 
\item $\Lambda$ contains $P$, or
\item A pencil of $\Lambda$ contains $P$, and an element of the pencil is singular along $P$.
\end{enumerate} 
\item $\rho_3=(3,3,-2,-2,-2)$:
\begin{enumerate} 
\item $\Lambda$ contains $L$,  and an element of $\Lambda$ is singular along $L$, or
\item A pencil of $\Lambda$ is singular along  $L$. 
\end{enumerate} 
\item $\rho_4=(4,-1,-1,-1,-1)$:
\begin{enumerate} 
\item $\Lambda$ contains $O$, and a pencil of $\Lambda$ is singular at $O$.\\
\end{enumerate} 
\end{enumerate}
\end{lemma}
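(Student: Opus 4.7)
The plan is to apply the Hilbert--Mumford numerical criterion directly for each $\rho_i$ via a weight-stratification analysis. For each $\rho_i$, the $15$ quadratic monomials in $W$ partition into a small number of $\rho_i$-weight strata indexed by the distinguished flag: for $\rho_1=(1,1,1,1,-4)$ the strata are $\Sym^2\langle a,b,c,d\rangle$ (weight $2$), the four monomials $xe$ with $x\neq e$ (weight $-3$), and $e^2$ (weight $-8$); analogous stratifications for $\rho_2$, $\rho_3$, $\rho_4$ are adapted to the flag parts $P$, $L$, $O$ respectively. After fixing a basis $(Q_1,Q_2,Q_3)$ of $\Lambda$, I group the columns of the $3\times 15$ coefficient matrix into blocks $A, B, C$ matching these strata, and write $V_A, V_B, V_C \subseteq \CC^3$ for the respective column spans. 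A Pl\"{u}cker coordinate $m_1\wedge m_2\wedge m_3$ is non-vanishing on $\Lambda$ iff its three columns are linearly independent in $\CC^3$, so $\Lambda$ is $\rho_i$-destabilized iff every admissible triple (one whose weight sum is $\geq 0$) is linearly dependent.

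I then run a case analysis on $\dim V_A$ for each $\rho_i$. The crucial combinatorial input is the identity $V_A+V_B+V_C = \CC^3$, which encodes the linear independence of $Q_1, Q_2, Q_3$ and rules out configurations in which the admissible-triple vanishing would simultaneously confine all of $V_A, V_B, V_C$ to a proper subspace. The surviving cases are then translated into geometry via the standard dictionary: the projection $\Lambda \to \Sym^2(V/I_\Pi)$ has corank $\geq k$ iff a $k$-dimensional subspace of $\Lambda$ lies in $I_\Pi W$, equivalently iff a $k$-dimensional subspace of $\Lambda$ contains $\Pi$; and a quadric $Q$ is singular along $\Pi$ iff $Q\in I_\Pi^2$ (which for a hyperplane $\Pi = H = V(e)$ reduces to $Q$ being a scalar multiple of $e^2$, and for the plane $P = V(d,e)$ means $Q \in (d,e)^2$).

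Implementing this for each subgroup: for $\rho_1$, the surviving destabilized configurations are $\dim V_A \leq 1$ (a pencil of $\Lambda$ contains $H$) and $e^2\in\Lambda$ (an element of $\Lambda$ is singular along $H$), yielding 1(a) and 1(b); the analyses for $\rho_2$ and $\rho_3$ are parallel, producing the two geometric conditions listed in each case after an identical case split on $\dim V_A \in \{0,1,2\}$ using $P$ and $L$ respectively. For $\rho_4$, block $A$ consists of the single monomial $a^2$ so $\dim V_A \in \{0,1\}$; ruling out $\dim V_A = 1$ via $V_A+V_B+V_C = \CC^3$ leaves only $V_A = 0$ together with $\dim V_B \leq 1$, which is exactly 4(a). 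The main technical obstacle is the $\rho_3$ analysis, which has three non-trivial strata and requires eliminating an intermediate rank-$2$ case for $V_A$; this is handled by observing that $V_B\subseteq V_A$ and $V_C\subseteq V_A$ simultaneously would force the total column span to have dimension $\leq 2$, contradicting $V_A+V_B+V_C = \CC^3$.
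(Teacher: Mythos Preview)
Your proposal is correct and follows essentially the same approach as the paper. The paper's proof simply enumerates, for each $\rho_i$, the triples of initial $\rho_i$-weights with negative sum (e.g.\ for $\rho_4$ these are $(3,-2,-2)$ and $(-2,-2,-2)$) and translates each directly into the stated geometric condition; your column-span formulation with $V_A,V_B,V_C$ and the case split on $\dim V_A$ is an equivalent, more explicitly linear-algebraic repackaging of that same enumeration, with your identity $V_A+V_B+V_C=\CC^3$ playing the role of the paper's implicit observation that degenerate strata lie in closures of the listed ones.
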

\begin{proof}
In case (4), the only triple of initial $\rho_4$-weights with negative sum is $(3, -2,-2)$ and $(-2,-2,-2)$.
However, the stratum of nets with initial weights $(-2,-2,-2)$ 
is in the closure of the stratum of nets with initial weights 
$(3,-2,-2)$. Evidently, any quadric of weight $3$ contains $O$, 
while any quadric of weight $-2$ is singular at $O$. 
Thus, the net with initial $\rho_4$-weights $(3,-2,-2)$ 
has a base point at $O$ and contains a pencil of quadrics singular at 
$O$. The proofs of cases (1)--(3) are similar.
\end{proof}

On the basis of this partial analysis, we may already conclude the important fact that a semi-stable net has 
a pure one-dimensional intersection, and hence defines a connected curve with local complete intersection 
singularities.
\begin{corollary}\label{C:net=curve}
If a net of quadrics in $\PP^4$ is semi-stable, 
then the corresponding intersection is connected and purely one-dimensional.
\end{corollary}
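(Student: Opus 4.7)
The plan is to reduce the problem to showing $V(\Lambda)$ has pure dimension one; connectedness then follows from the standard connectedness theorem for positive\nb-dimensional intersections of ample divisors in projective space. To establish purity, I would argue by contradiction: suppose $V(\Lambda)$ has a $2$-dimensional irreducible component $S$, and in each of two cases produce a flag that contradicts Lemma~\ref{L:Weights1}.

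First consider the case when $S$ is degenerate, lying in some hyperplane $V(e)$. Then $S=V(e,f)$ is a hypersurface in $V(e)\cong\PP^3$ of degree $k=\deg f$. If $k=1$, then $S$ is a plane, and in appropriate coordinates $\Lambda\subset(d,e)$, so Lemma~\ref{L:Weights1}(2)(a) yields $\rho_2$\nb-instability. If $k=2$, then each generator has the form $\alpha_i f + e L_i$, so after row-reducing the coefficients $\alpha_i$ we obtain a pencil of $\Lambda$ lying in $(e)$, and Lemma~\ref{L:Weights1}(1)(a) yields $\rho_1$\nb-instability. If $k\ge 3$, then $(I_S)_2 = e\cdot V$, forcing $\Lambda\subset(e)$, and again $\rho_1$\nb-instability holds.

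The case when $S$ is non-degenerate in $\PP^4$ is the main obstacle. Here I would appeal to the classification of non-degenerate irreducible surfaces $S\subset\PP^4$ with $h^0(I_S(2))\ge 3$: such $S$ has degree $3$ and is either a rational normal cubic scroll or a cone over a twisted cubic. If $S$ is a cone, choose coordinates so that the vertex is $O$; then $\Lambda$ consists of quadrics in $b,c,d,e$ alone, so $O$ is a base point at which every element of $\Lambda$ is singular, and Lemma~\ref{L:Weights1}(4) yields $\rho_4$\nb-instability. If $S$ is a cubic scroll, then by the Hilbert-Burch presentation of $I_S$ as the $2\times 2$ minors of a $2\times 3$ matrix of linear forms, $\Lambda$ contains a rank\nb-$3$ quadric $Q$ whose singular locus is exactly the directrix $L\subset S$; choosing coordinates so that $L=V(c,d,e)$, we have $L\subset V(\Lambda)$ and $Q$ singular along $L$, so Lemma~\ref{L:Weights1}(3)(a) yields $\rho_3$\nb-instability.

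The step I expect to be hardest is the classification of non-degenerate surfaces in $\PP^4$ cut out by three or more linearly independent quadrics, which is classical (essentially due to Castelnuovo) but must be handled carefully for possibly non-reduced or non-normal $S$. A direct Jacobian argument is available as backup: at each smooth point $P\in S$, the reduced $3\times 4$ Jacobian of $(Q_1,Q_2,Q_3)$ has rank at most $2$, producing a pencil of quadrics in $\Lambda$ singular at $P$. Tracking how this pencil varies over $S$ yields either a line $L\subset S$ along which a fixed element of $\Lambda$ is singular (giving $\rho_3$\nb-instability) or a special point where a $2$\nb-dimensional family of quadrics is singular (giving $\rho_4$\nb-instability), again contradicting Lemma~\ref{L:Weights1}.
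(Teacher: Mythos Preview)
Your argument is correct and lands on the same four destabilizing cases as the paper, but the decomposition is organized differently. The paper first disposes of the case where a pencil of $\Lambda$ contains a hyperplane, then chooses two generators $Q_1,Q_2$ with $Q_1\cap Q_2$ a quartic surface and uses Cohen--Macaulayness of $(Q_1,Q_2)$ to force the excess component $S'\subset Q_3$ to be a \emph{proper} component of $Q_1\cap Q_2$, hence of degree at most $3$; the degree-$1$, $2$, $3$ cases then match your plane, degenerate quadric surface, and scroll cases. Your degenerate/non-degenerate split is a perfectly good alternative, but be aware that the ``classification'' you invoke in the non-degenerate case---that a non-degenerate irreducible surface in $\PP^4$ lying on three independent quadrics has degree $3$---is not a standard citable statement: its proof is precisely the paper's saturation argument (two independent quadrics through a non-degenerate $S$ meet in a degree-$4$ surface, and if $\deg S=4$ then $S$ equals that complete intersection scheme-theoretically, precluding a third quadric). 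Two small patches: you should allow for a component of dimension $\geq 3$ (handled trivially, since then $\Lambda\subset(e)$), and your assertion that the ideal of a smooth cubic scroll contains a rank-$3$ quadric singular along a line of the scroll is best justified not by Hilbert--Burch but by projecting from the directrix, which maps the scroll onto a plane conic and hence places it on the rank-$3$ cone over that conic. Your backup Jacobian sketch is too vague to stand on its own, but you do not need it.
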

\begin{proof} Fulton-Hansen connectedness theorem \cite{fulton-hansen} gives the first statement. 
If the intersection fails to be purely one-dimensional, then either a pencil of quadrics in the net contains a hyperplane,
in which case the net is destabilized by $\rho_1$, or we may 
choose a basis $\{Q_1, Q_2, Q_3\}$ of the net, 
such that $S:=Q_1 \cap Q_2$ is a quartic surface and one of its irreducible components 
is contained entirely in $Q_3$. Because $R_S:=\CC[a,b,c,d,e]/(Q_1,Q_2)$ 
has dimension $3$, $R_S$ is Cohen-Macaulay and so the ideal $(Q_1,Q_2)$ is saturated. 
Thus $S$ cannot lie entirely inside $Q_3$. We conclude that there must be an irreducible 
component $S' \subset S$ of degree at most $3$ which is contained in $Q_3$. 
If $\deg S'=1$, then the net contains a plane and is 
thus destabilized by $\rho_2$. If $\deg S'=2$, then the span of $S'$ is a hyperplane
and a pencil of quadrics in the net contains this hyperplane. 
Such a net is destabilized by $\rho_1$. 

Finally if $\deg S'=3$, 
then the classical classification of surfaces of minimal degree due to del Pezzo \cite{del-pezzo-1886}
implies that $S'$ is a \emph{rational normal scroll}; see \cite{eisenbud-harris-minimal-degree} for 
a modern proof of this result and \cite{harris} for an introduction to scrolls.
We have two cases to consider. If $S'$ is smooth, 
then the net is projectively equivalent to $(ad-bc, ae-bd, ce-d^2)$ (see \cite[Lecture 9]{harris}) 
and is destabilized by $\rho_3$. 
If $S'$ is singular, then it must be a cone over a rational normal cubic curve. 
If $O$ denotes the vertex of the cone, then we must have a pencil 
of quadrics singular at $O$. Such a net is destabilized by $\rho_4$.

\end{proof}

The following lemma is an unenlightening but straightforward combinatorial stepping stone to the geometric 
analysis in Theorem \ref{T:Description}.
\begin{lemma}\label{L:Weights2} 
Suppose $\Lambda$ is $\rho_k$-unstable for $k\in \{5,\dots, 12\}$
but is $\rho_j$-semi-stable for $1\leq j\leq k-1$. 
Let $m_1, m_2, m_3$ be the initial monomials of $\Lambda$ with respect to $\rho_k$.
Then $(w_{\rho_k}(m_1), w_{\rho_k}(m_2), w_{\rho_k}(m_3))$ must be one of the following triples:
\begin{enumerate}

\item[(5)] $\rho_5=(3,3,3,-2,-7)$:
\begin{itemize}
\item $(6,-4,-4)$
\end{itemize}

\item[(6)] $\rho_6=(4,4,-1,-1,-6)$: 
\begin{itemize}
\item$(8,-2,-7)$
\item $(3,-2,-2)$
\end{itemize}

\item[(7)] $\rho_{7}=(9,4,-1,-6,-6)$:
\begin{itemize}
\item $(8,3,-12)$
\end{itemize}

\item[(8)] $\rho_8=(7,2,2,-3,-8)$:
\begin{itemize}
\item $(4,-1,-6)$
\end{itemize}

\item[(9)] $\rho_{9}=(12,7,2,-8,-13)$:
\begin{itemize}
\item $(4,-1,-6)$
\end{itemize}

\item[(10)] $\rho_{10}=(9,4,-1,-1,-11)$:
\begin{itemize}
\item $(8,-2,-7)$
\end{itemize}

\item[(11)] $\rho_{11}=(14,4,-1,-6,-11)$:
\begin{itemize}
\item $(8,3,-12)$

\end{itemize}
\item[(12)] $\rho_{12}=(13,8,3,-7,-17)$:
\begin{itemize}
\item $(16, -4, -14)$
\end{itemize}

\end{enumerate}
\end{lemma}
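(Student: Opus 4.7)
The plan is a combinatorial case analysis for each $k \in \{5, \ldots, 12\}$. The key observation is that, after elementary row operations, any net $\Lambda$ admits a basis $(Q_1, Q_2, Q_3)$ whose $\succ_{\rho_k}$-initial monomials $m_i := in_{\rho_k}(Q_i)$ are distinct; the \plucker\ coordinate $m_1 \wedge m_2 \wedge m_3$ is then non-vanishing on $[\Lambda]$ and attains the maximum $\rho_k$-weight among all non-vanishing \plucker\ coordinates. Consequently $\rho_k$-instability is equivalent to $w_{\rho_k}(m_1)+w_{\rho_k}(m_2)+w_{\rho_k}(m_3)<0$, and the task reduces to classifying the weight triples $\bigl(w_{\rho_k}(m_1), w_{\rho_k}(m_2), w_{\rho_k}(m_3)\bigr)$ compatible with $\rho_j$-semi-stability for every $j<k$.

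For each $\rho_k$ separately I would first tabulate the $\rho_k$-weights of the fifteen quadratic monomials in $W$ (typically only five or six distinct values occur), then list in decreasing order of weight sum the candidate triples $(w_1,w_2,w_3)$ with $w_1\geq w_2 \geq w_3$, $w_1+w_2+w_3<0$, realized by three distinct monomials. To eliminate a candidate that does not appear in the statement, I would exhibit a non-vanishing \plucker\ coordinate of strictly larger $\rho_k$-weight whose non-vanishing is forced by $\rho_j$-semi-stability for some $j<k$. Such forced \plucker\ coordinates come from the same source as in the proof of Theorem \ref{T:subgroups}: Lemma \ref{obs} translates $\rho_1,\ldots,\rho_4$-semi-stability into vanishing/non-vanishing statements about $(Q_1,Q_2,Q_3)$, and analogous elementary implications of $\rho_5,\ldots,\rho_{k-1}$-semi-stability, derived while handling the earlier cases of the present lemma, supply further forced monomials.

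The main obstacle is purely the case volume; $\rho_{12}$, whose weights span the widest range, produces the longest candidate list. However, each elimination is a one-line weight comparison between the candidate weight sum and the $\rho_k$-weight of a forced non-vanishing \plucker\ coordinate, so no individual step is difficult. The enumeration terminates for each $\rho_k$ with precisely the triples recorded in the statement.
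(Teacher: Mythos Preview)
Your proposal is correct and follows essentially the same approach as the paper: enumerate candidate weight triples with negative sum and eliminate those incompatible with $\rho_j$-semi-stability for $j<k$. The paper streamlines this by first reformulating Lemma~\ref{obs} as four numerical inequalities (C1)--(C4) on the triple $(w_1,w_2,w_3)$ themselves, which cuts the candidate list down before any monomial-level reasoning is needed; the residual eliminations (using $\rho_j$ with $5\leq j<k$) are then done, as you suggest, by checking that a net with the given $\rho_k$-initial weights is forced to be $\rho_j$-unstable. One detail worth flagging: in Case~(12) the paper eliminates the triple $(6,1,-9)$ only after a coordinate swap $b\leftrightarrow c$, i.e.\ it uses $\rho_8$-semi-stability for a 1-PS of that numerical type acting on a permuted basis, so your eliminations must allow $\rho_j$ to range over all 1-PS of the given numerical type, not just the one diagonal in the fixed ordered basis.
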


\begin{proof}
The proof is purely algorithmic. Consider $\rho_k=(\bar a,\bar b,\bar c,\bar d,\bar e)$ for $5\leq k\leq 12$
and suppose $w_1 \geq w_2 \geq w_3$ is the triple of $\rho_k$-initial weights of 
a $\rho_k$-unstable net $\Lambda$. 
Lemma \ref{obs} translates into the following 
conditions satisfied by $w_1, w_2, w_3$:
\begin{enumerate}
\item[(C1)] If $\bar d \neq \bar e$, then $w_3>2\bar e$.
\item[(C2)] $w_1\geq 2c$. Moreover, if $w_2<2\bar c$, then $w_3\geq \bar c+\bar e$.
\item[(C3)] $w_2\geq \bar b+\bar e$. Moreover, if $w_1<2\bar b$, then $w_2\geq \bar a+\bar e$ and 
$w_3\geq \bar b+\bar e$. 
\item[(C4)] If $w_1\neq 2\bar a$, then $w_1\geq \bar a+\bar d$ and $w_2\geq \bar a +\bar e$.

\end{enumerate}
Now for each $\rho_k$, we list all triples of $\rho_k$-initial weights that have negative sum and
satisfy (C1)--(C4). We will do only Case (12), by far the most involved, 
and leave the rest as an exercise to the reader. 

The set of possible $\rho_{12}=(13,8,3,-7,-17)$-weights of quadratic monomials is 
\[
\{26,21,16,11,6,1,-4,-9,-14,-24, -34\}.
\] 
Suppose $w_1\geq w_2 \geq w_3$ are initial $\rho_{12}$-weights of $\rho_{12}$-unstable $\Lambda$ and
$\Lambda$ is $\rho_{i}$-stable for $i=1,\dots,4$. By (C1), $w_3\geq -24$.
If $w_1=26$, then the triples with negative sum are $(26,-14,-14)$, which violates
(C3), and $(26,-4,-24)$, which violates (C2). Suppose $w_1<26$. Then $w_1\geq 6$ and 
$w_2\geq -4$ by (C4). The triples with negative sum satisfying these conditions are 
\begin{itemize}
\item $(21,1,-24)$, which violates (C2);
\item $(16,-4,-14)$;
\item $(16,6,-24)$; 
\item $(11,-4,-9)$;
\item $(11,1,-14)$, which violates (C3);
\item $(6,-4,-4)$;
\item $(6,1,-9)$;
\item $(6,6,-14)$, which violates (C3).
\end{itemize}
Finally, one can easily check that the following statements hold: 
A net with $\rho_{12}$-initial weights $(16,6,-24)$ 
is $\rho_7=(9,4,-1,-6,-6)$-unstable. 
A net with $\rho_{12}$-initial weights $(11,-4,-9)$ 
is $\rho_8=(7,2,2,-3,-8)$-unstable.  
A net with $\rho_{12}$-initial weights $(6,-4,-4)$ 
is $\rho_{9}=(12,7,2,-8,-13)$-unstable. 
A net with $\rho_{12}$-initial weights $(6,1,-9)$ 
is destabilized by $\rho_{8}=(7,2,2,-3,-8)$ after the coordinate change $c\leftrightarrow b$.
\end{proof}

\begin{theorem}\label{T:Description}
A curve $C$ which is a complete intersection of three quadrics is unstable if and only if 
it is (a degeneration of) one of the following curves:
\begin{enumerate}
\item[(1)]\label{D1} $C$ is a double structure on an elliptic quartic curve in $\PP^3$.
\item[(2)] $C$ consists of a union of a double conic and two conics.
\item[(3)] $C$ has a non-reduced structure along a line $L$ 
and the residual curve $C'$ meets $L$ in at least $\deg C' -2$ points.
\item[(4)] $C$ has a point $O$ with a three dimensional Zariski tangent space, i.e. $C$ is not locally planar.
\item[(5)] $C$ contains a degenerate double structure on a conic, 
i.e. the double structure is contained in $\PP^3$.
\item[(6)] \begin{enumerate}
\item[]
\item $C$ consists of a union of an elliptic quartic curve and two conics meeting along a pair of triple points.
\item $C$ contains a double line that meets the residual arithmetic genus one component in three points.
\end{enumerate}
\item[(7)]  $C$ consists of two elliptic quartics meeting in an $A_5$ singularity and a node. 
\item[(8)] $C$ contains a planar $4$-fold point whose two branches are lines.
\item[(9)] $C$ contains a double line and the residual genus two curve is tangent to it.
\item[(10)]  $C$ consists of two tangent conics and an elliptic quartic meeting the conics in a $D_6$ singularity and two nodes.
\item[(11)]
$C$ has $D_5$ singularity and the hyperelliptic involution on the normalization of $C$ exchanges
the points lying over the $D_5$ singularity. 
\item[(12)] $C$ contains a conic meeting the residual genus two component in an $A_7$ singularity 
and the attaching point is a Weierstrass point on the genus two component.
\end{enumerate}
\end{theorem}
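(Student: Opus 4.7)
The plan is to go through the destabilizing subgroups $\rho_1,\dots,\rho_{12}$ of Theorem \ref{T:subgroups} one at a time, and for each $\rho_i$ read off a geometric description of the nets destabilized by $\rho_i$ (modulo nets already destabilized by some $\rho_j$ with $j<i$). Because we are free to conjugate by $\SL(5)$, we may fix a basis $\{a,b,c,d,e\}$ on which $\rho_i$ acts diagonally, and then the set of $\rho_i$-unstable nets is determined combinatorially by the triples of initial $\rho_i$-weights occurring for $Q_1,Q_2,Q_3$. The geometric input we need is the dictionary between monomial constraints on $(Q_1,Q_2,Q_3)$ and features of the distinguished flag $O\subset L\subset P\subset H\subset \PP V$: quadrics of large $\rho_i$-weight vanish to high order along this flag, while quadrics of very negative weight are singular or contained in small subspaces.

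For the first four subgroups I would simply invoke Lemma \ref{L:Weights1}, translating (a) ``a pencil contains $H$'' into ``a component of $C$ lies in $H$ with residual scheme of the expected codimension'', and analogously for $P$, $L$, $O$. Combined with Corollary \ref{C:net=curve}, these give precisely the pictures in items (1)--(4) of the statement (the double elliptic quartic on the quadric $H$ for $\rho_1$; the double conic with two conics on $P$ for $\rho_2$; the non-reduced structure along $L$ for $\rho_3$; the embedded three-dimensional Zariski tangent space at $O$ for $\rho_4$). I would check, in each case, that generically the weight condition forces exactly the geometric condition in the statement, and that special degenerations really are closures of these strata.

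For $\rho_5,\dots,\rho_{12}$ I would use the combinatorial reduction of Lemma \ref{L:Weights2}, which lists, for each $\rho_k$, the short list of triples $(w_{\rho_k}(m_i))_{i=1}^{3}$ of initial monomial weights that can occur for nets not destabilized by any earlier $\rho_j$. For each such triple I would write down the most general normalized basis $(Q_1,Q_2,Q_3)$ with those initial monomials, parametrize the resulting family of nets, and then directly identify the intersection curve: for example, the triple $(8,3,-12)$ for $\rho_7$ corresponds to nets in which a double conic sits in a degenerate $\PP^3$ (case 5); the triple $(6,-4,-4)$ for $\rho_5$ leads, after base-point analysis, to the union of an elliptic quartic with two conics through a pair of triple points (case 6(a)); the $\rho_8$-triple $(4,-1,-6)$ forces a planar $4$-fold point whose two branches are lines (case 8); the $\rho_9$-triple $(4,-1,-6)$ gives a double line with the residual genus two curve tangent to it (case 9); the $\rho_{10}$-triple $(8,-2,-7)$ yields two tangent conics plus an elliptic quartic meeting them in a $D_6$ and two nodes (case 10); the $\rho_{11}$-triple $(8,3,-12)$ corresponds to a $D_5$ singularity whose two branches are exchanged by the hyperelliptic involution of the normalization (case 11); and the $\rho_{12}$-triple $(16,-4,-14)$ yields a conic meeting a residual genus $2$ curve at an $A_7$ whose attaching point is a Weierstrass point (case 12). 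Two cases will require extra care: the $\rho_6$-triple $(3,-2,-2)$ must be recognized as a degeneration of the $(8,-2,-7)$-stratum, and the $\rho_5$-triple $(6,-4,-4)$ must be shown to contain, as a degeneration, case 6(b), i.e.\ a double line with the residual arithmetic-genus-one curve meeting it three times.

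The main obstacle will be the last four geometric identifications in the $\rho_8$--$\rho_{12}$ cases. There the constraints cut out a several-parameter family of nets whose intersection is a curve of genus five with a rather special singularity (a planar $4$-fold point, a $D_6$, a $D_5$ with exchanged branches, or an $A_7$ at a Weierstrass point), and to see the singularity type together with the global constraint (tangent conics, hyperelliptic involution, Weierstrass condition) one has to compute the local equations of $C$ at the distinguished point of the flag, normalize, and check which points of the normalization lie over the singularity; in the $A_7$ and $D_5$ cases this amounts to recognizing a condition on a degree-two pencil on the residual genus $2$ component. I would handle this by choosing convenient local coordinates near $O$ or $L$, using the Weierstrass preparation/Newton polygon to identify the analytic singularity type, and then using the residual pencil cut out by a $c$-plane of quadrics to verify the Weierstrass/hyperelliptic clause. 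Once these geometric identifications are in place, the theorem follows by taking the union over $i$, because Theorem \ref{T:subgroups} guarantees that every unstable net is destabilized by some $\rho_i$.
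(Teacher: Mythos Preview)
Your overall strategy is exactly the one the paper uses: run through $\rho_1,\dots,\rho_{12}$, invoke Lemma~\ref{L:Weights1} for the first four and Lemma~\ref{L:Weights2} for the rest, and for each admissible weight triple identify the generic curve in the corresponding stratum by writing down the most general normalized basis and computing the intersection.

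However, your proposed identifications for $\rho_5$, $\rho_6$, $\rho_7$ are scrambled, and following them would produce an incorrect proof. The numbering in the statement is not accidental: case $(k)$ is precisely the $\rho_k$-stratum. Concretely, the $\rho_5$-triple $(6,-4,-4)$ forces $Q_2=d^2+eL$ and $Q_3\in(e)$, so the curve contains a double conic lying in the hyperplane $e=0$ --- this is case~(5), not case~6(a). The $\rho_7$-triple $(8,3,-12)$ gives (after normalization) $Q_1\equiv ac+b^2+c^2\pmod{(d,e)}$, $Q_2\equiv ad+bc\pmod{(c,d,e)^2}$, $Q_3=de$, which is two elliptic quartics meeting in an $A_5$ and a node --- case~(7), not case~(5). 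Most seriously, the two $\rho_6$-triples $(8,-2,-7)$ and $(3,-2,-2)$ are \emph{not} related by degeneration; they parameterize genuinely different loci (case~6(a): elliptic quartic plus two conics through two triple points; case~6(b): a double line meeting a residual arithmetic-genus-one sextic in three points). If you treat $(3,-2,-2)$ as a degeneration of $(8,-2,-7)$ you will miss case~6(b) entirely, and your attempt to recover it from the $\rho_5$-stratum will fail because that stratum has the wrong generic curve.

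One further omission: the theorem is an ``if and only if'', so for several cases (notably (1), (2), (3), (5), (6), (8), (9)) you also need the converse direction --- given a curve of the stated geometric type, exhibit coordinates in which the net visibly has negative $\rho_k$-weight. The paper does this explicitly, and in some cases (e.g.\ the $4$-fold point in case~(8)) the converse is the harder direction, requiring a projection argument to locate the destabilizing quadric. Your proposal does not address this.
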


\begin{proof}
For each of $\{\rho_i\}_{i=1}^{12}$, we shall give a geometric description of the $\rho_i$-unstable stratum. 
Suppose $C$ is a $1$-dimensional complete intersection of three quadrics and let $\Lambda$
be its homogeneous ideal.
The analysis for $\{\rho_i\}_{i=1}^{4}$ proceeds via Lemma \ref{L:Weights1} (note that parts (1a) and (2a)
of the lemma do not apply as they describe intersections with a higher-dimensional component).

\smallskip

(1) By Lemma \ref{L:Weights1} (1b), a curve 
$C$ is $\rho_1$-unstable if and only if $\Lambda$ contains a double hyperplane. 
If $\Lambda$ contains a double hyperplane, then $C$ is a non-reduced curve with a double structure along an
elliptic quartic curve in $\PP^3$. 
Conversely, given a curve $C$ with a double structure along a necessarily degenerate elliptic quartic, 
let $H$ be the hyperplane containing $C_{red}$.
Since the restriction of $\Lambda$ to $H$ is at most two dimensional, we must have an element 
$Q\in \Lambda$
which contains $H$. 
If $\rank Q=2$, 
then our curve would be a reducible union of two degenerate quartic curves. We conclude that $\rank Q=1$,
and so $\Lambda$ contains a double hyperplane.

\smallskip

(2) By Lemma \ref{L:Weights1} (2b), $C$ is $\rho_2$-unstable if and only if a pencil of $\Lambda$ 
contains a plane $P$ and an element of the pencil is singular along $P$. 
Let $Q_2, Q_3$ generate the pencil, with $Q_3$ singular along $P$.
We have $Q_3=H_1\cup H_2$, a union of two hyperplanes, with $P=H_1 \cap H_2$. 
Since $P \subset Q_2$, we must have $Q_2 \cap H_1 = P \cup P_1$ 
and $Q_2 \cap H_2=P \cup P_2$ where $P_1$ and $P_2$ are planes. 
In sum, $Q_2 \cap Q_3=P_1 \cup P \cup P_2$, where 
the plane $P$ occurs in the intersection with multiplicity two. 
It follows that $C= Q_1 \cap Q_2 \cap Q_3$ consists of the union of two conics 
($Q_1 \cap P_1$ and $Q_1 \cap P_2$) 
and a double conic ($Q_1 \cap P$). 
Conversely, given such a curve, 
if we let $H_1$ and $H_2$ denote the hyperplanes spanned by each reduced conic with the double conic,
then $H_1 \cup H_2$ contains the curve, 
so we recover an element $Q_3$ of the net singular along $P$, the span of the double conic. 
Furthermore, since all elements of the net contain the double conic, the quadrics containing $P$ form a pencil.

\smallskip

(3)  By Lemma \ref{L:Weights1} (3), 
$C$ is $\rho_3$-unstable in two cases:
\begin{enumerate}
\item[(a)] The net contains a line $L$ and there is a quadric singular along $L$.
\item[(b)] There is a pencil of quadrics singular along a line $L$.
\end{enumerate}
Suppose there is a pencil of quadrics singular along a line $L$. Let $O'\in L\cap Q_3$. Then 
$O'$ is a base point of the net and a pencil of the net is singular at $O'$. It follows by Lemma 
\ref{L:Weights1} (4) that the net is destabilized by $\rho_4$.
Hence, it suffices to consider the case when 
the net contains $L$ and there is a quadric $Q_3$ singular along $L$. Then $C$ is generically non-reduced
along $L$. Let $C'$ be the subcurve of $C$ residual to $L$.
Note that every hyperplane containing $L$ intersects $Q_3$ in two planes and
the intersection of $Q_1\cap Q_2$ with each of these planes is the union of $L$ 
and at most a single other point. It follows
that every hyperplane containing $L$ intersects $C'$ in at least $\deg C' -2$ points lying on $L$. 
Therefore, $C'$ meets $L$ in $\deg C' -2$ points.

Conversely, suppose $C$ has a multiple structure along $L$ and the residual curve $C'$
intersects $L$ in $\deg C' -2$ points. Then the
projection of $C'$ away from $L$ is a conic. It follows that $C' \setminus L$ lies on a rank $3$ quadric $Q$
singular along $L$. Finally, a non-reduced component 
supported on $L$ lies on $Q$. Since $C$ is Cohen-Macaulay, it 
follows that $C$ lies on $Q$.

\smallskip

(4) By Lemma \ref{L:Weights1} (4), 
$C$ is $\rho_4$-unstable if and only if $O$ is a base point of the net, and the net contains a 
pencil of quadrics singular at $O$. 
If we choose generators $Q_1, Q_2, Q_3$ with $Q_2, Q_3$ singular at $O$, then  
$T_O C=T_O Q_1 \cap T_O Q_2 \cap T_O Q_3=T_O Q_1$ implies $\dim T_O C \geq 3$. Conversely, 
if $O\in C$ such that $\dim T_O C \geq 3$, 
then $O$ is a base point of the net and there is a pencil of quadrics singular at $O$.

\smallskip

Consider now $\{\rho_i\}_{i=5}^{12}$. 
For each triple of $\rho_i$-initial weights 
from Lemma \ref{L:Weights2}, the locus 
of nets having these initial weights is an irreducible locally closed set. In what follows we describe
the generic point of each of them.

(5)  $\rho_5=(3,3,3,-2,-7)$. By Lemma \ref{L:Weights2}, it suffices to consider 
a $\rho_5$-unstable net with initial $\rho_5$-weights $(6,-4,-4)$. Such a net has generators
$(Q_1,Q_2,Q_3)$ such that $Q_2=d^2+eL$, where $L$ is a linear form, and $Q_3\in (e)$. 
Evidently, $C$ contains the double conic $(Q_1, e, d^2)$ contained in the hyperplane $(e)$. 

Conversely, if $C$ contains a double structure on a conic contained in a hyperplane $H$, 
then we may take $(e)$ to be the ideal of $H$ and $(d,e)$ to be the ideal of the plane spanned by the 
underlying conic. It is clear that the restriction of $\Lambda$ to $H$ must contain the double plane $d^2$.
Thus the ideal of $C$ contains a quadric in $(e)$ and a linearly independent quadric in $(d^2)+(e)$.
It follows that $C$ is destabilized by $\rho_5$.

\smallskip

(6) $\rho_6=(4,4,-1,-1,-6)$. 
There are two possible triples of initial $\rho_6$-weights: $(8, -2, -7)$ and $(3,-2,-2)$.

If the initial weights are $(8,-2,-7)$, then there is a basis of $\Lambda$ 
of the form $(Q_1, Q_2, Q_3)$, where $Q_2\in (c,d)^2+(e)$ and $Q_3= eL(c,d,e)$.
If we let $H'$ be the hyperplane $L(c,d,e)=0$, then $H' \cap Q_1 \cap Q_2$ is an elliptic normal curve, 
while $H \cap Q_1 \cap Q_2$ is a pair of conics. All
three components meet in the two points of $L\cap Q_1$.  

Conversely, given a curve $C$ of this form, let $H'$ 
be the hyperplane spanned by the elliptic normal curve, 
and $H$ the hyperplane spanned by the pair of conics. Let $d=0$ and $e=0$ be the equations
of $H'$ and $H$, respectively.
Then $Q_3:=de \in \Lambda$, and the restriction of $\Lambda$ to $H$ contains a rank $2$ quadric 
singular along a line contained in $H'$. Thus we can choose the coordinate $c$ so that the ideal of $C$
can be written as $(Q_1, Q_2, Q_3)$, where $Q_2\in (c,d)^2+(e)$. 
Thus $C$ is destabilized by $\rho_6$.

If the initial weights are $(3,-2,-2)$, then
the net is generated by quadrics $Q_1\in (c,d,e)$ and  $Q_2, Q_3\in (c,d)^2+(e)$.
For a general such net, we can choose coordinates so that $(Q_2,Q_3)=(ae+c^2, be+d^2)$. This pencil 
cuts out a Veronese quadric with a double line along $c=d=e=0$ (cf. Lemma \ref{L:veronese}).
Being a quadric section of this Veronese, 
$C$ must be a union of a double line and an elliptic sextic meeting the double line in three points.

Conversely, suppose $C$ has a double line 
component meeting the residual component of arithmetic genus one in three points.
Take a quadric in the net with a vertex on the double line and let $e=0$ be the tangent hyperplane 
to this quadric. Then the scheme-theoretic intersection of $C$ with $e=0$ is a double line in $\PP^3$.
Assuming that the line is $c=d=e=0$, we conclude that in appropriately chosen coordinates 
the net is $(Q_1, Q_2, Q_3)$, where $Q_1\in (c,d,e)$, $Q_2=ae+c^2$, and $Q_3=be+d^2$. 
Such a net is destabilized by $\rho_6$. 

\smallskip 

(7) $\rho_{7}=(9,4,-1,-6,-6)$.
By Lemma \ref{L:Weights2}, we have only need to consider initial weights $(8, 3,-12)$.
Then the generators of the net can be written as 
\begin{align*}
Q_1&=ac+b^2+c^2 \mod{(d,e)}, \\
Q_2&=ad+bc \mod{(c,d,e)^2}, \\ 
Q_3&=de.
\end{align*}
The two elliptic quartics are $Q_1=Q_2=d=0$ and $Q_1=Q_2=e=0$.
Dehomogenizing with respect to $a$, we see that locally at $O$, 
we have $c=b^2+R_1$, $d=bc+R_2=b^3+bR_1+R_2$, 
and $de=0$. This translates into $(e-nb^3)e=0$ locally at $O$, which 
is an $A_5$ singularity. Restricting to $d=e=0$, we see that the two elliptic quartics 
intersect at $O$ and one other point. The claim follows.

\smallskip 

(8) $\rho_8=(7,2,2,-3,-8)$. A general net with $\rho_8$-initial weights $(4, -1, -6)$ 
has generators $Q_1=ad+\overline{Q}_1(b,c,d,e)$, $Q_2=ae+dL_1(b,c,d)+eL_2(b,c,d,e)$, and $Q_3=eL_3(b,c,d,e)+d^2$.  
After an appropriate change of variables, the generators can be rewritten as 
\begin{align*}
Q_1 &=ad+\overline{Q}_1(b,c,d,e),\\
Q_2 &=ae+bd, \\
Q_3 &=ce+d^2.
\end{align*}
Note that $\{d=e=0\}\cap C=\{\overline{Q}_1(b,c)=0\}$ is the union of two lines meeting at $O$. 
From the above, we deduce that $d=R(b,c)$ and $e=bd=bR(b,c)$ for some 
power series $R(b,c)$ with a quadratic initial form. Now $ce+d^2=0$ translates into
\[
R(b,c)(bc+R(b,c))=0,
\]
which defines an ordinary $4$-fold planar point.

Conversely, suppose $\Lambda$ is a net
defining a curve $C$ with a $4$-fold planar point $O$ whose two branches are lines. 
Let $L_1, L_2$ be the lines and $C'$ be the residual sextic. Then $C'$ has geometric genus one and
a node at $O$. Let $\pi\co \PP^4 \dashrightarrow \PP^3$ be the projection from $O$,
Set $C''=\pi(C')$ and $p_i=\pi(L_i)$ for $i=1,2$. Let $p_3$ and $p_4$ the images of the tangent lines
to the branches of $C'$ at $O$. Then $C''$ is a genus one quartic in $\PP^3$ and $p_1$, $p_2$ are points
lying on its chord $\overline{p_3p_4}$. Being a genus one quartic, $C''$ lies on a pencil of quadrics in $\PP^3$
and hence there is a quadric in $\PP^3$ containing $C''$ together with the $4$ collinear points 
$p_1,p_2,p_3,p_4$. This gives a rise to a singular quadric $Q_3$ in $\PP^4$ that
has a vertex at $O$, contains $C''$, and contains the plane $P$ spanned by $L_1$ and $L_2$. 
Since the quadrics in $\Lambda$ containing $P$ form a pencil, 
we conclude that there is a quadric $Q_2\in \Lambda$ that contains $P$ and which is 
linearly independent with $Q_3$.

Summarizing, we can choose coordinates so that $P$ is given by $d=e=0$ and find a basis 
of $\Lambda$ consisting of $Q_1=ad+\overline{Q}_1(b,c,d,e), Q_2=ae+bd, Q_3=dL+eM$, where
$L$ and $M$ are linear forms in $(b,c,d,e)$. 
The resulting local analytic equation at $O$ is 
$R(b,c)(bM(b,c,d,e)-L(b,c,d,e))=0$, where $R(b,c)$ is power series with a quadratic initial form. 
This equation defines a triple point unless $L(b,c,d,e)\in (d,e)$. Therefore $L(b,c,d,e)\in (d,e)$, and
$\Lambda$ is destabilized by $\rho_8=(7,2,2,-3,-8)$.

\smallskip

(9) $\rho_{9}=(12,7,2,-8,-13)$. By Lemma \ref{L:Weights2}, we have to consider nets with initial weights $(4,-1,-6)$. 
Such a net is generated by 
\begin{align*}
Q_1 &=ad+c^2 \mod (c,d,e)^2, \\
Q_2 &=ae+bd \mod (d,e)(c,d,e), \\
Q_3 &=be+cd \mod (d,e)^2.
\end{align*}
Restricting to $P$, we see that the net has a double structure along $L$ and $L$ meets the residual 
genus $2$ curve $D$ in a single point $O: b=c=d=e=0$. 
Dehomogenizing with respect to $a$, we see that the singularity at $O$ is locally analytically
$c^2(b^2-c)=0$. Thus $L$ is tangent to $D$ at $O$. 

Conversely, if a locally planar complete intersection $C$ contains a double line tangent to the residual
genus $2$ component, then after an appropriate change of coordinates, the ideal of $C$ is $(Q_1, Q_2, Q_3)$, 
where $Q_1=ad+c^2 \mod{(c,d,e)^2}$, $Q_2=ae+bd$, and $Q_3=be+cd$. Such a net is destabilized
by $(12, 7, 2, -8, -13)$.

\smallskip 

(10) $\rho_{10}=(9,4,-1,-1,-11)$.
Consider nets with initial $\rho_{10}$-weights $(8,-2,-7)$. The generators for such a net 
can be chosen to be 
$Q_1=ac+b^2+Q_1(b,c,d,e)$, $Q_2=ae+Q_2(c,d,e)$, $Q_4=be$.  The net defines a reducible 
curve. Along the plane $b=e=0$, the two components meet in three points defined by 
$Q_2(c,d)=ac+Q_1(c,d)=0$. 
Restricting to $e=0$, we obtain a reducible quartic $Q_2(c,d)=Q_1=0$, which is a union of two conics. 
Restricting to $b=0$, we obtain an elliptic quartic meeting the two conics in a $D_6$ singularity and two nodes.

\smallskip 

(11) $\rho_{11}=(14, 4, -1, -6, -11)$. By Lemma \ref{L:Weights2}, the only relevant triple of initial weights is
$(8, 3, -12)$.  
The general net with $\rho_{11}$-initial weights $(8,3,-12)$ is generated, after an appropriate change
of coordinates, by 
\begin{align*}
Q_1&=ad-b^2-R_1(c,d,e) \\
Q_2&=ae-bc-R_2(c,d,e) \\
Q_3&=ce-d^2.
\end{align*}
Dehomogenizing with respect to $a$, we can write the first two equations as
$d=b^2+R_1(c,d,e)$ and $e=bc+R_2(c,d,e)$. 

Now we plug into the equation $Q_3$ to get a local equation for the plane curve singularity at $O$: 
\[
bc^2+cR_2(c,d,e)-b^4-2b^2R_1(c,d,e)-R_1^2(c,d,e)=0,
\] 
which defines a $D_5$ singularity. 

Furthermore, the hyperelliptic involution induced on the normalization of $C$ by the projection away 
from $L$ interchanges the points lying over the singularity.

\smallskip 
(12) $\rho_{12}=(13, 8, 3, -7, -17)$. 
By Lemma \ref{L:Weights2}, the only possible triple of $\rho_{12}$-initial weights is
$(16, -4, -14)$. 
After an appropriate coordinate change, the net is generated by 
\begin{align*}
Q_1 &=ac+b^2+aR_1(d,e)+R_2(c,d,e), \\
Q_2 &= ae+cd, \\
Q_3 &=ce+d^2,
\end{align*}
where $R_1$ is a linear form and $R_2$ is a quadratic form.
The resulting curve has a conic component $C_1$ in the plane $P$ and $C_1$ meets the residual
component $C_2$ at the point $O$ in a singularity with the local analytic equation $d(b^4-d)=0$,
that is an $A_7$ singularity. By setting $c=-t^3, d=t^2, e=t$, we see that $C_2$
is given by the equation 
\[
b^2-t^3+R_1(t^2,t)+R_2(t^3, t^2, t)=0.
\]
In other words, the projection away from $L$ realizes $C_2$ as the genus two double cover of 
$\PP^1$ ramified at $O$. 
\end{proof}

\section{Geometry of semi-stable curves}
\label{S:semi-stable}

\subsubsection*{Notation}
To a net of quadrics in $\PP^4$ and a choice of its basis $(Q_1, Q_2, Q_3)$, we
associate the quintic polynomial $\det (xQ_1+yQ_2+zQ_3)$.
The $\PGL(3)$-orbit of the corresponding quintic plane curve
is an invariant of the net, which we call the \emph{discriminant quintic}.

Since a semi-stable net defines a complete intersection by Corollary \ref{C:net=curve}, 
we will use words ``net'' and ``curve'' interchangeably.
In particular, the discriminant $\Delta(C)$ of a semi-stable curve $C$ is the discriminant quintic of 
its defining net of quadrics.

\subsection{Main Results}
In this section, we use  the instability results of the previous section to give 
an explicit description of semi-stable curves.
Our main results are the following two theorems 
classifying reduced and non-reduced semi-stable curves. 

\begin{theorem}
\label{T:reduced-semi-stable}
A reduced semi-stable curve is a quadric section 
of a smooth quartic del Pezzo in $\PP^4$. 
Conversely, a quadric section $C$ of a smooth quartic del Pezzo in $\PP^4$ 
is unstable if and only if 
\begin{enumerate}
\item $C$ is non-reduced, or
\item $C$ is a union of an elliptic quartic and two conics meeting in a pair of triple points, or
\item $C$ is a union of two elliptic quartics meeting along an $A_5$ and an $A_1$ singularities, or
\item $C$ has a $4$-fold point with two lines as its two branches, or
\item $C$ is a union of two tangent conics and an elliptic quartic 
meeting the conics in a $D_6$ singularity and two nodes, or
\item $C$ has a $D_5$ singularity with pointed normalization $(\widetilde{C}, p_1, p_2)$ and
$p_1$ is conjugate to $p_2$ under the hyperelliptic involution of $\widetilde{C}$, or 
\item $C$ contains a conic meeting the residual genus $2$ component in an $A_7$ singularity 
and the attaching point of the genus $2$ component is a Weierstrass point, or
\item $C$ is a degeneration of curves in (1)--(7).
\end{enumerate} 
\end{theorem}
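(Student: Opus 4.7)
The plan is to prove the two implications separately, making heavy use of the classification of unstable nets from Theorem \ref{T:Description}.

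\emph{Forward direction (semi-stable reduced $\Rightarrow$ on a smooth del Pezzo).} Given a reduced semi-stable net $\Lambda$, I would pass to its discriminant quintic curve $D = \{\det(xQ_1+yQ_2+zQ_3)=0\}\subset \PP^2$. Lines in $\PP^2$ parameterize pencils in $\Lambda$, and the base locus of such a pencil is a smooth quartic del Pezzo precisely when the line meets $D$ transversely at $5$ distinct smooth points; equivalently, when the restriction of $D$ to the line is a squarefree binary quintic. Such a line exists generically whenever $D$ is a reduced divisor, so the problem reduces to showing that if $D$ is everywhere non-reduced (including the extreme case $D\equiv 0$), then $C$ is unstable, contradicting the hypothesis. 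I would run this as a case analysis on the supports and multiplicities of $D$: when $D\equiv 0$ every quadric in $\Lambda$ is singular, and a dimension count on the singular loci forces either a common singular point (destabilizing via $\rho_4$) or a common singular line (destabilizing via $\rho_3$); a multiple irreducible component of $D$ can be handled similarly by extracting the pencil of quadrics singular along the locus corresponding to that component and matching it to one of $\rho_1,\ldots,\rho_4$.

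\emph{Reverse, ``unstable if'' direction.} This is essentially a dictionary lookup into Theorem \ref{T:Description}. The reduced items (2)--(7) of the present theorem correspond respectively to items (6a), (7), (8), (10), (11), and (12) of Theorem \ref{T:Description}. The non-reduced item (1) is covered collectively by the non-reduced strata (1), (2), (3), (5), (6b), and (9) of Theorem \ref{T:Description}. Item (8) (degenerations of curves in (1)--(7)) is automatic since the unstable locus is closed in $\Gr(3,15)$.

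\emph{Reverse, ``semi-stable otherwise'' direction.} For a reduced quadric section $C$ of a smooth quartic del Pezzo not appearing in (1)--(8), I would go through the twelve items of Theorem \ref{T:Description} and rule each out. Item (4) (non-locally-planar curves) is immediately impossible since a smooth del Pezzo is a smooth surface, so every divisor on it is locally planar. Every non-reduced item of Theorem \ref{T:Description} is excluded by the reducedness hypothesis on $C$. The remaining reduced unstable items of Theorem \ref{T:Description} match precisely (2)--(7) of the present theorem, so any reduced quadric section avoiding these configurations is semi-stable. The main obstacle is the forward direction: classifying nets whose discriminant quintic is everywhere non-reduced is delicate, but the semi-stability assumption sharply restricts the possibilities and the requisite case analysis can be matched, case by case, to the destabilizing 1-PS's catalogued in Theorem \ref{T:subgroups}.
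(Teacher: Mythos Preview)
Your overall architecture matches the paper's, but the forward direction contains a genuine error. You propose to show that if the discriminant $D$ is non-reduced then $C$ is unstable. This is false: the balanced ribbon $I_R=(ac-b^2, ae-2bd+c^2, ce-d^2)$ is semi-stable (Proposition~\ref{P:kempf}) and its discriminant has a double conic. Likewise the curves in Equations~\eqref{E:dtc}--\eqref{E:double-line} are semi-stable with non-reduced discriminants. So non-reducedness of $D$ alone cannot force instability; you must use the hypothesis that $C$ is \emph{reduced}. The paper proves the contrapositive: if $C$ is semi-stable and $\Delta(C)$ is non-reduced, then $C$ is non-reduced (Proposition~\ref{P:reduced-discriminant-2}). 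This requires classifying all semi-stable nets whose discriminant has a double line (Proposition~\ref{P:discriminant-double-line}) or a double conic (Proposition~\ref{P:discriminant-double-conic}), and checking in each case that the resulting curve is non-reduced. Your sketch, which tries to match non-reduced discriminants directly to $\rho_1,\ldots,\rho_4$, misses precisely the cases where such nets are semi-stable---for example the Veronese pencil $(ac-b^2, ce-d^2)$ of Lemma~\ref{L:singular-pencil}(C), whose quadrics are all singular with no common singular point and no common plane.

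There is a second, smaller gap in your ``unstable if'' direction for item~(1). You claim a non-reduced quadric section of a smooth del Pezzo is covered by the non-reduced strata of Theorem~\ref{T:Description}, but those strata carry specific incidence conditions (e.g.\ item~(3) requires the residual curve to meet the line in $\geq \deg-2$ points) that are not obviously satisfied by an arbitrary non-reduced divisor on a smooth del Pezzo. The paper instead argues directly (Proposition~\ref{P:reduced-discriminant-1}): on a smooth del Pezzo $S$ the pencil defining $S$ has no quadric singular along any curve, so a multiple component $D\subset C$ forces a \emph{single} quadric of the net to be singular along all of $D$; then $\deg D\in\{1,2,3\}$ matches $\rho_3$, $\rho_2$, $\rho_1$ via Lemma~\ref{L:Weights1}. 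Your matching of items (2)--(7) to Theorem~\ref{T:Description}(6a),(7),(8),(10),(11),(12) and the closedness argument for~(8) are correct and agree with the paper.
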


\begin{theorem}[Non-reduced semi-stable] 
\label{T:non-reduced-semi-stable}
Let $N\subset \MX$ be the image of the locus of non-reduced semi-stable curves. 
Then $N$ has the following decomposition into irreducible components:
\begin{equation*}
N=N_1 \cup N_2 \cup N_3 \cup N_4,
\end{equation*}
where
\begin{enumerate}
\item $N_1$ consists of a single point parameterizing
the balanced genus $5$ ribbon described by Equation \eqref{E:ribbon}.
\item $N_2$ parameterizes curves with a double twisted cubic meeting the residual conic in two points described by Equation \eqref{E:dtc}.
\item $N_3$ parameterizes curves with a double conic component 
meeting the residual rational normal quartic in three points described by Equation \eqref{E:double-conic}.
\item $N_4$ consists of a single point parameterizing
the semi-stable curve with two double lines joined by conics described by Equation \eqref{E:double-line}.
\end{enumerate}
\end{theorem}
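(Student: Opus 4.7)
The plan is to prove three things: (a) each of the four canonical nets \eqref{E:ribbon}, \eqref{E:dtc}, \eqref{E:double-conic}, \eqref{E:double-line} is semi-stable; (b) every non-reduced semi-stable net isotrivially degenerates into one of these; and (c) the four resulting loci $N_1, N_2, N_3, N_4$ are distinct irreducible components of $N$.

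For (a), each canonical equation has an obvious positive-dimensional torus stabilizer inside $\PGL(5)$ acting diagonally on $(a,b,c,d,e)$, so its $\SL(5)$-orbit is closed in $\Gr(3,15)^{ss}$ once semi-stability is known. Semi-stability is then a finite check: for each of the twelve 1-PS's $\rho_i$ of Theorem \ref{T:subgroups}, I exhibit a Pl\"{u}cker monomial $m_1\wedge m_2\wedge m_3$ appearing with nonzero coefficient in $Q_1\wedge Q_2\wedge Q_3$ of non-negative $\rho_i$-weight. Because each of the four equations is extremely symmetric, this is straightforward; indeed, the remark following Theorem \ref{T:subgroups} reduces the check to the single maximal torus preserving the given diagonal form.

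For (b), let $C$ be non-reduced and semi-stable, and let $D\subset C$ be a component of multiplicity $\geq 2$ with reduced support $D_{\mathrm{red}}$ of degree $d$. Theorem \ref{T:Description}(1) prevents $D_{\mathrm{red}}$ from being a quartic elliptic carrying a double-hyperplane structure, and \ref{T:Description}(5) prevents $D_{\mathrm{red}}$ from being a conic spanning a $\PP^3$ with a degenerate double structure. Consequently $D_{\mathrm{red}}$ is linearly non-degenerate in its own span and must be a rational normal curve of degree $d\leq 4$. I then case-split on $d$. When $d=4$, all of $C$ is a ribbon on a rational normal quartic, and the remaining embedded-structure data forces it to be the balanced ribbon \eqref{E:ribbon}, giving $N_1$. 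When $d=3$, the residual component has degree $2$, and Theorem \ref{T:Description}(3) applied to the incidence with the twisted cubic forces it to be a conic meeting $D_{\mathrm{red}}$ in two points, yielding after normalization the family \eqref{E:dtc} and $N_2$. When $d=2$, the residual is a degree $4$ curve, and the only possibilities compatible with Theorem \ref{T:Description}(2)--(3) are a rational normal quartic meeting the double conic in three points (with \eqref{E:triple-conic} as a specialization), giving $N_3$. When $d=1$, Theorem \ref{T:Description}(3), (6), (9) eliminate all configurations except two double lines joined by a pair of conics, giving $N_4$.

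For (c), the explicit descriptions in the statement realize $N_2$ and $N_3$ as quotients of affine spaces of linear forms by the residual stabilizer, so each $N_i$ is irreducible; the four components are then distinguished by the degree of the reduced support of their non-reduced parts. The main obstacle will be the enumeration in cases $d=2$ and $d=1$, where one must eliminate numerous parasitic configurations (e.g.\ a double conic meeting two conics, ruled out by Theorem \ref{T:Description}(2); a double line with an elliptic quintic residual tangent to it, ruled out by \ref{T:Description}(9); various $A_5^{\{1\}}$- and $D$-singular configurations ruled out by \ref{T:Description}(7), (10), (11), (12)) while simultaneously enforcing $p_a(C)=5$ and the complete-intersection condition. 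The isotrivial-degeneration step within each positive-dimensional family $N_2, N_3$ then amounts to observing that the torus fixing the canonical form acts on the family with the canonical form as unique closed orbit, which is transparent from the weight-diagonal structure of the corresponding equations.
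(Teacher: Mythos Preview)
Parts (a) and (c) match the paper: semi-stability of the four canonical ideals via Kempf--Morrison (each carries a diagonal $\mathbb{G}_m$-stabilizer, so one checks only the twelve $\rho_i$ of Theorem~\ref{T:subgroups}), and irreducibility of $N_2,N_3$ from the affine space of linear-form parameters.

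For (b) the paper takes a different route, and your direct case-split on $\deg D_{\mathrm{red}}$ has a genuine gap at the step where you pass from a geometric description to a normal form. The paper's mechanism is the discriminant quintic: it shows (Proposition~\ref{P:reduced-discriminant-1}, Lemma~\ref{L:equivalence}) that $C$ non-reduced forces $\Delta(C)$ non-reduced, hence $\Delta(C)$ contains a double line or a double conic, and then classifies the corresponding \emph{pencils} of singular quadrics (Lemmas~\ref{L:singular-pencil}, \ref{L:reducible-quartic}, \ref{L:veronese}) to read off the normal forms (Propositions~\ref{P:discriminant-double-line}, \ref{P:discriminant-double-conic}, \ref{P:double-conic}, \ref{P:double-line}). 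Your argument lacks a substitute for this pencil analysis. Concretely: in the $d=4$ case, ``the remaining embedded-structure data forces it to be the balanced ribbon'' is unjustified---complete-intersection ribbons on a rational normal quartic form a positive-dimensional family, and singling out the balanced one requires first placing $C$ on the specific Veronese surface of Lemma~\ref{L:veronese}, which the paper obtains from the pencil classification (Case~(C) of Lemma~\ref{L:singular-pencil}), not from Theorem~\ref{T:Description}. In the $d=2$ case, Theorem~\ref{T:Description}(2)--(3) eliminates some residual configurations but neither identifies the pencil through the plane of the conic as $(ad-bc,\,be-cd)$ nor yields~\eqref{E:double-conic}; that is the content of Proposition~\ref{P:double-conic} and Lemma~\ref{L:reducible-quartic}. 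In the $d=1$ case, the isotrivial degeneration to $I_{DL}$ is obtained in the paper by working on the del Pezzo $P(1,2,2)$ and degenerating it to $P_0$ (Proposition~\ref{P:double-line}); your elimination via Theorem~\ref{T:Description}(3),(6),(9) does not produce this degeneration. Finally, your reduction of $D_{\mathrm{red}}$ to a rational normal curve is incomplete: Theorem~\ref{T:Description}(1) and (5) exclude smooth elliptic quartics and degenerate double-conic structures, but not singular or reducible supports (e.g.\ a pair of skew lines as $D_{\mathrm{red}}$, or higher multiplicity on a line or conic).
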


Our analysis proceeds by investigation of the discriminant quintic 
and is motivated by the following easy result on the relationship between 
a curve and its discriminant:

\begin{lemma}\label{L:equivalence} Let $C$ be a complete intersection 
of three quadrics in $\PP^4$. Then $\Delta(C)$ is reduced if and only if 
$C$ lies on a smooth quartic del Pezzo in $\PP^4$.
\end{lemma}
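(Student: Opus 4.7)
The plan is to study lines $\ell \subset \PP(\Lambda) \cong \PP^{2}$ and the associated pencils $\Lambda_\ell \subset \Lambda$ of quadrics in $\PP^{4}$, each with base locus $X_\ell = V(\Lambda_\ell)$, a complete intersection of two quadrics containing $C$. The crucial observation is that the restriction of $\Delta(C) = \det(xQ_1+yQ_2+zQ_3)$ to any line $\ell \subset \PP^2$ is, up to a nonzero scalar, the discriminant binary quintic of the pencil $\Lambda_\ell$. The whole argument then factors through the dictionary between reducedness of a ternary quintic and transversal restriction to some line, combined with the classical dictionary between smoothness of a del Pezzo and separability of its pencil discriminant.

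The classical input, alluded to in the introduction and treated in \cite{miranda}, is that $X_\ell \subset \PP^{4}$ is a smooth quartic del Pezzo if and only if $\Delta(C)|_{\ell}$ has five distinct roots. (A singular point $p \in X_\ell$ is the vertex of a unique cone in $\Lambda_\ell$ which contributes multiplicity $\geq 2$ to the discriminant, and conversely.) Granting this, the lemma reduces to the purely planar statement that a ternary quintic is reduced if and only if some line meets it transversally in five distinct points.

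For the forward direction, if $\Delta(C)$ is reduced, then its singular locus is $0$-dimensional. A general line $\ell$ avoids these singular points and is non-tangent to $\Delta(C)$, hence meets $\Delta(C)$ in five distinct points, so $X_\ell$ is the desired smooth del Pezzo containing $C$. For the converse, suppose $C \subset X = V(Q', Q'')$ with $X$ a smooth quartic del Pezzo. Since $C$ is a complete intersection of three quadrics, a Hilbert-function count shows that the degree-two piece of the saturated ideal of $C$ is three-dimensional and coincides with $\Lambda$; thus $Q', Q'' \in \Lambda$ span a pencil corresponding to a line $\ell_0 \subset \PP^{2}$, and $\Delta(C)|_{\ell_0}$ has five distinct roots. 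If $\Delta(C)$ were non-reduced, it would contain a multiple component $L \subset \PP^{2}$, and the point $L \cap \ell_0$ would contribute a multiple root to $\Delta(C)|_{\ell_0}$, contradicting the five-distinct-roots conclusion.

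The main obstacle is the classical pencil-discriminant dictionary; this is well known but worth isolating as a lemma. The remaining points (that $\Lambda$ is the full degree-two piece of the ideal of $C$, and that non-reducedness of a plane quintic forces every line to acquire a multiple intersection with the non-reduced locus) are routine once set up correctly.
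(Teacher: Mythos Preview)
Your proof is correct and follows essentially the same approach as the paper's: both directions pass through the dictionary between smoothness of a quartic del Pezzo and separability of its pencil discriminant, and both reduce the statement to the planar fact that a ternary quintic is reduced if and only if some line meets it in five distinct points. You spell out a couple of points the paper leaves implicit---notably that the pencil defining a smooth del Pezzo containing $C$ necessarily lies in $\Lambda$ (your Hilbert-function remark), and the explicit contradiction in the converse direction---but the underlying argument is the same.
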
 

\begin{proof} Let $\Lambda$ be the net of quadrics containing $C$.
If $\Delta(C)$ is reduced, then $C$ lies on a smooth quartic del Pezzo, 
defined by any pencil $\ell \subset \Lambda$ transverse to $\Delta(C)$.
Conversely, if $C$ lies on a smooth quartic del Pezzo $P$ in $\PP^4$, then 
the pencil of quadrics containing $P$ has a reduced discriminant. 
It follows that $\Delta(C)$ is reduced.
\end{proof}

In Corollary \ref{C:smooth-DP} and Proposition \ref{P:reduced-discriminant-1}, 
we will show that 
a semi-stable curve $C$ is reduced if and only if its discriminant $\Delta(C)$ is reduced.  
This, together with Theorem \ref{T:Description},
leads to a fairly concrete description of reduced semi-stable curves 
as divisors on smooth del Pezzos given in Theorem \ref{T:reduced-semi-stable}.
On the other hand, if $C$ is non-reduced our analysis breaks into two cases, 
according to whether $\Delta(C)$ has a double line or a double conic. 
In each case, we find a distinguished quartic surface containing $C$, 
which enables us to describe $C$ rather explicitly. 
The surfaces arising in this analysis are described in Section \ref{S:quartics}.

\subsection{Special quartic surfaces in $\PP^4$}
\label{S:quartics}

Four quartic surfaces, each a complete intersection of two quadrics in $\PP^4$, 
play a special role in our analysis of semi-stable curves. Before describing them, 
let us briefly recall the classification of 
pencils of quadrics, or, equivalently quartic del Pezzo surfaces, by
their Segre symbols \cite{hodge-pedoe,miranda}:
\begin{definition} Let $\ell=\{Q(t) \mid t\in \PP^1\}$ be a pencil of quadrics in $\PP^4$, not all singular. 
Suppose $\ell$ has exactly $k$ singular elements $Q_1, \dots, Q_k$.
The {\em Segre symbol} of $\ell$ is a double array
$$\Sigma=\bigl((a_{ij})_{1\leq j\leq m_i}\bigr)_{1\leq i\leq k},$$ where 
$\sum_{j\geq r} a_{ij}$ is the minimum order of vanishing at $[Q_i]$ of $(6-r)\times (6-r)$ minors of $\ell$, 
considered as a function of $t$;
in particular, $\sum_{j=1}^{m_i}a_{ij}$
is the multiplicity of $[Q_i]$ in the discriminant $\Delta(\ell)$.
\end{definition}
Two quartic del Pezzo surfaces with projectively equivalent discriminants 
are projectively equivalent if and only if their Segre 
symbols are equal; see \cite[Theorem 2]{miranda} or \cite[p.278]{hodge-pedoe}. 
Therefore, if $\Sigma$ is a Segre symbol, we can speak of a del Pezzo surface 
$P(\Sigma)$.

\subsubsection{Special del Pezzos}
\label{S:del-pezzos}
We consider two special del Pezzo surfaces $P_0:=P(1,(1,1), (1,1))$ and $P_1:=P(1,2,2)$. 
We recall from \cite[Lemma 3]{miranda} that
$P_1$ is the 
anti-canonical embedding of the blow-up 
of $\PP^2$ at points $\{p, q_1, r_1, q_2, r_2\}$ on a smooth conic, with $r_i$  
infinitesimally close to $q_i$, for $i=1,2$; and that
$P_0$ is the anti-canonical
embedding of the blow up of $\PP^2$ at points $\{p, q_1, r_1, q_2, r_2\}$, where $r_i$ is
infinitesimally close to $q_i$, for $i=1,2$, and $p$ is the intersection of the lines $\overline{q_ir_i}$.

Note that $P_1$ isotrivially specializes to $P_0$. Indeed, if we choose coordinates $x,y,z$ on $\PP^2$ 
so that $z=0$ is the line $\overline{q_1q_2}$ and $x=0$ (resp., 
$y=0$) is the line $\overline{q_1r_1}$ (resp., $\overline{q_2r_2}$), then the degeneration can be realized by 
the one-parameter subgroup of $\operatorname{PGL}(3)$ acting on $\PP^2$ via $t\cdot [x:y:z]=[tx:ty:t^{-2}z]$.

\subsubsection{Veronese quartic} 
\label{S:veronese}
The third quartic surface of interest is described in the following lemma.
\begin{lemma}[Non-linearly normal Veronese] \label{L:veronese}
Let $V \subset \PP^4$ be the surface defined by the ideal $(ac-b^2, ce-d^2)$. 
Then $V$ is a projection of a Veronese surface in $\PP^5$. Moreover, $V$ 
has two pinch point singularities (local equation $uv^2=w^2$) at $[0:0:0:0:1]$ and $[1:0:0:0:0]$ 
as well as simple normal crossing along the line $b=c=d=0$. 
If an irreducible double quartic curve $C$ on $V$ is cut out by a quadric, 
then $C$ is either a double hyperplane section or $C$ is projectively equivalent to the balanced
ribbon $I_R=(ac-b^2, ce-d^2, ae-2bd+c^2)$.
\end{lemma}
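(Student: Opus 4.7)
The plan is to parametrize $V$ via the restricted Veronese map $\nu\co \PP^2 \to V$, $[s:t:u]\mapsto [s^2:st:t^2:tu:u^2]$, and to use this parametrization to analyze both the singularities of $V$ and the irreducible double quartic curves on $V$ cut out by a quadric. The relations $ac=s^2t^2=b^2$ and $ce=t^2u^2=d^2$ show that $\nu$ lands in $V$, and since both are irreducible surfaces of the same dimension, $\nu$ is surjective. Moreover, $\nu$ factors as the full Veronese $[s:t:u]\mapsto [s^2:st:t^2:tu:u^2:su]$ in $\PP^5$ followed by linear projection from the point corresponding to the $su$ coordinate, establishing $V$ as a projection of the Veronese surface.

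For the singularity analysis, a direct computation shows that $\nu$ is bijective away from the line $\{t=0\}\subset\PP^2$ and restricts there to the $2$-to-$1$ map $[s:0:u]\mapsto [s^2:0:0:0:u^2]$ onto $\{b=c=d=0\}\subset V$, branched precisely at $[1:0:0]$ and $[0:0:1]$. In the affine chart $a=1$, eliminating $c=b^2$ presents $V$ locally as $d^2=b^2e$, which is the Whitney umbrella $uv^2=w^2$ (with $u=e$, $v=b$, $w=d$); this gives the pinch-point at $[1:0:0:0:0]$, and the chart $e=1$ treats $[0:0:0:0:1]$ symmetrically. Away from these two points, $d^2-b^2e$ factors analytically (on $e\neq 0$) as $\bigl(d-b\sqrt{e}\bigr)\bigl(d+b\sqrt{e}\bigr)$, giving two transverse smooth sheets, i.e.\ simple normal crossings along $\{b=c=d=0\}$.

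For the classification of double quartic curves, a quadric $Q\in \Sym^2\langle a,b,c,d,e\rangle$ pulls back via $\nu$ to a quartic form in $s,t,u$ lying in the $13$-dimensional subspace of $\Sym^4\CC^3$ spanned by all monomials except $s^3u$ and $su^3$; indeed, the two generators of the ideal of $V$ pull back to zero, while the remaining $13$ quadratic monomials in $\{a,b,c,d,e\}$ yield linearly independent quartic monomials in $s,t,u$. If $C$ is an irreducible quartic on $V$ distinct from the double line, its proper transform $\tilde{C}\subset\PP^2$ is an irreducible conic, and the condition $V\cap Q=2C$ (as cycles on $V$) translates to $\nu^*Q=\tilde{C}^2$ modulo $(ac-b^2,ce-d^2)$. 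Writing $\tilde{C}=c_1s^2+c_2st+c_3t^2+c_4tu+c_5u^2+c_6su$, vanishing of the $s^3u$ and $su^3$ coefficients in $\tilde{C}^2$ forces $c_1c_6=c_5c_6=0$, leaving two cases.

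If $c_6=0$, then $\tilde{C}$ is the pullback of the hyperplane $H=c_1a+c_2b+c_3c+c_4d+c_5e$, so $C=V\cap H$ is a hyperplane section and $2C$ is cut out by the quadric $H^2$, giving the double hyperplane section case. If $c_6\neq 0$ and $c_1=c_5=0$, the substitutions $s\mapsto s+(c_4/c_6)t$ and $u\mapsto u+(c_2/c_6)t$ extend to $\SL(5)$-transformations $a\mapsto a+2\beta b+\beta^2 c$, $b\mapsto b+\beta c$ (and analogously for the $u$-shift), which leave both $ac-b^2$ and $ce-d^2$ invariant and reduce $\tilde{C}$ to $c_6 su+(c_3-c_2c_4/c_6)t^2$. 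Irreducibility of $C$ forces the $t^2$ coefficient to be nonzero, and a diagonal rescaling $t\mapsto\lambda t$ (inducing $b\mapsto\lambda b$, $c\mapsto\lambda^2 c$, $d\mapsto\lambda d$) normalizes $\tilde{C}$ to $su-t^2$, whose square pulls back to $ae-2bd+c^2$; this exhibits $C$ as the balanced ribbon $I_R$. The main obstacle is the bookkeeping in this second case, namely verifying that the relevant coordinate reductions on $\PP^2$ lift to $\SL(5)$-automorphisms preserving the pencil $(ac-b^2,ce-d^2)$ and that every nonzero value of the residual $t^2$ coefficient normalizes to a single orbit representative.
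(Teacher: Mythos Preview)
Your proof is correct and follows essentially the same approach as the paper: parametrize $V$ by the projected Veronese map $[s:t:u]\mapsto[s^2:st:t^2:tu:u^2]$, verify the singularities by local computation, and classify quadric-section double quartics by pulling back to $\PP^2$ and analyzing which squared conics avoid the forbidden monomials $s^3u$ and $su^3$. You are more explicit than the paper about the local charts and about lifting the $\PP^2$ coordinate changes to automorphisms of $\PP^4$ preserving the pencil, but the structure and key observations are identical.
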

\begin{proof}
Evidently, $V$ is the image of $[x:y:z] \mapsto [x^2:xy:y^2:yz:z^2]$,
which is a projection of the Veronese in $\PP^5$.
The statement about singularities follows from a local computation.

Every irreducible double quartic curve on $V$ must be the image of a double conic on $\PP^2$. 
Suppose that the conic has equation $f(x,y,z)=0$. The double quartic is a quadric section 
if and only if $f^2(x,y,z)\in \Sym^2 \CC[x^2, xy, y^2, yz, z^2]$. In particular, $f^2(x,y,z)$
cannot have $x^3z$ and $xz^3$ monomials. Thus either $f(x,y,z)$ has no $xz$ term or it has 
no $x^2$ and $z^2$ terms. In the former case, $f(x,y,z)=0$ is a hyperplane section of $V$. 
Suppose now $f(x,y,z)$ has no $x^2$ or $z^2$ term but has $xz$ term. Then
we can write $f(x,y,z)=xz +y L(x,z)+\lambda y^2$. 
After a linear change of variables on $\PP^2$ inducing a compatible linear change of variables in $\PP^4$, 
we can assume that $f(x,y,z)= xz+\lambda y^2$. If $\lambda=0$, then $f^2(x,y,z)=ae$ and it defines 
a union of two double conics. This contradicts the irreducibility assumption. 
Thus, we can assume that $f(x,y,z)=xz-y^2$, so that 
$(xz-y^2)^2=x^2z^2-2xzy^2+y^4=ae-2bd+c^2$.
\end{proof}

\subsubsection{Reducible quartic}
The final quartic of special interest to us is the reducible union of a plane with a cubic scroll, 
which arises in Part (1) of the following lemma.
\begin{lemma}
\label{L:reducible-quartic}
Suppose $\ell$ is a pencil of quadrics containing a common plane and with no common singular points.
Then $\ell$ is one of the following up to projectivity:
\begin{enumerate}
\item $\ell=(ad-bc, be-cd)$, defining a union of a plane with a cubic scroll.
The vertices of the quadrics in $\ell$ trace out the conic $b=d=ae-c^2=0$.
\item$\ell=(ad-\mu b^2, be-cd)$, where $\mu\in \CC$. 
The vertices of the rank $4$ quadrics in $\ell$ trace out the line $b=d=e=0$.
\end{enumerate}
\end{lemma}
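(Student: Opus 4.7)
The plan is to normalize coordinates so the common plane becomes $P = \{b = d = 0\}$, to write every quadric of $\ell$ in the form $Q_i = bL_i + dM_i$ with linear forms $L_i, M_i$, and then to analyze the $2 \times 2$ matrix
\[
\bar A = \begin{pmatrix} \bar L_1 & \bar M_1 \\ \bar L_2 & \bar M_2 \end{pmatrix},
\]
whose entries are the reductions $\bar L_i, \bar M_i \in \CC\langle a, c, e \rangle$ modulo $(b,d)$. A short gradient computation shows that for $p \in P$ one has $\nabla(sQ_1 + tQ_2)|_p = (0,\, s\bar L_1(p)+t\bar L_2(p),\, 0,\, s\bar M_1(p)+t\bar M_2(p),\, 0)$, so $p$ is a common singular point of $\ell$ if and only if $\bar A(p) = 0$. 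Hence the hypothesis translates into the condition that the entries of $\bar A$ have no common zero on $P$, equivalently that they span all of $\langle a, c, e \rangle$.

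A direct calculation shows that if the entries of $\bar A$ span $\langle a, c, e\rangle$, then $\delta := \det \bar A$ is a ternary quadratic form of rank at least $2$. The classical theory of matrix factorizations of a plane conic then reduces $\bar A$ --- up to the action of $\SL(2)_{\text{pencil}} \times \SL(2)_{(b,d)} \times \SL(3)_{(a,c,e)}$ --- to one of two normal forms. If $\delta$ is smooth of rank $3$, we may take $\delta = c^2 - ae$ and $\bar A$ to be $\bigl(\begin{smallmatrix} -c & a \\ e & -c \end{smallmatrix}\bigr)$; if $\delta$ has rank $2$, we may take $\delta = -ae$ and $\bar A$ to be $\bigl(\begin{smallmatrix} 0 & a \\ e & -c \end{smallmatrix}\bigr)$.

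Once $\bar A$ is fixed, each $L_i, M_i$ is only determined modulo a linear form in $b, d$. The resulting six correction parameters --- the $b^2$-, $bd$-, and $d^2$-coefficients of $Q_1$ and $Q_2$ --- are to be absorbed by the unipotent part of the stabilizer of $P$, namely the six coordinate shifts $a \mapsto a + \alpha_1 b + \alpha_2 d$, $c \mapsto c + \gamma_1 b + \gamma_2 d$, $e \mapsto e + \epsilon_1 b + \epsilon_2 d$, together with the residual $\SL(2)$-actions on $(b,d)$ and on the pencil basis. A direct linear-algebra bookkeeping shows that in the rank-$3$ case the resulting $6 \times 6$ linear system is invertible, killing all corrections and producing the pencil $(ad - bc,\, be - cd)$ of part~(1). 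In the rank-$2$ case the system has rank $5$, with a single obstruction --- the $b^2$-coefficient of $Q_1$, which cannot be touched by any shift because $Q_1$ involves no $c$ or $e$ --- giving the one-parameter normal form $(ad - \mu b^2,\, be - cd)$ of part~(2). The vertex-locus assertions follow from direct gradient computation: solving $\nabla(sQ_1 + tQ_2) = 0$ yields the parametrizations $[t^2:0:st:0:s^2]$ in case~(1) and $[t^2:0:st:0:0]$ for the rank-$4$ members in case~(2), cutting out the claimed conic and line respectively.

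The main obstacle will be the bookkeeping in the third paragraph: tracking exactly which shifts kill which corrections, and verifying that precisely one continuous invariant $\mu$ survives in the rank-$2$ case while none survives in the rank-$3$ case. Everything else reduces either to a routine gradient calculation or to the classical classification of determinantal representations of plane conics.
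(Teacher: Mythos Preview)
Your approach is correct and takes a genuinely different, more structural route than the paper's.

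The paper proceeds by direct coordinate manipulation: after writing the pencil as $(bL_1-dL_2,\, bL_3-dL_4)$, it simply asserts that (without loss of generality) $b,d,L_2,L_3,L_4$ can be taken linearly independent, sets $L_2=a$, $L_3=e$, $L_4=c$, and then reduces the remaining form $L_1$ to $\lambda c+\mu b$ by an explicit change of variables. The case split is then $\lambda\neq 0$ versus $\lambda=0$. This is short, but the two ``without loss of generality'' steps are stated without justification, and the underlying reason for the dichotomy is not made visible.

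Your argument instead identifies the governing invariant up front: the rank of the ternary quadric $\delta=\det\bar A$. The spanning hypothesis forces $\operatorname{rank}\delta\geq 2$ (this is the step that replaces the paper's unproved ``WLOG''), and the two cases of the lemma correspond exactly to $\operatorname{rank}\delta=3$ and $\operatorname{rank}\delta=2$. Normalizing $\bar A$ via determinantal representations of the conic $\{\delta=0\}$ and then counting the six $(b,d)^2$-correction parameters against the six unipotent shifts gives a clean explanation of why no modulus survives in the rank-$3$ case while exactly one (the $b^2$-coefficient of $Q_1=ad$) survives in the rank-$2$ case. One small remark: you phrase the no-common-singular-point hypothesis as \emph{equivalent} to the spanning condition on $\bar A$, but your gradient computation only checks common singular points lying on $P$; this is harmless here (the paper uses the same one-sided implication), and the resulting normal forms are easily checked to have no common singular points anywhere.
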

\begin{proof}
Suppose a pencil of quadrics contains a plane $b=d=0$. Then the general form of the pencil is 
$bL_1-dL_2=bL_3-dL_4=0$.
Since the pencil does not have a common singular point, the set given by $b=d=L_1=L_2=L_3=L_4=0$ is 
empty. Without loss of generality, we can assume that $b,d,L_2,L_3,L_4$ are linearly independent and
choose coordinates so that $L_2=a$, $L_3=e$, $L_4=c$. Thus $\ell=(ad-bL_1, be-cd)$.
Changing coordinates, we can assume that $L_1=\lambda c+\mu b$. 
If $\lambda\neq 0$, a further change of coordinates:
$a':=\lambda a$, $c':=\lambda c+\mu b$, $b':=\lambda b$, $e':=e+(\mu/\lambda)d$, $d':=d$ gives
$I=(a'd'-b'c', b'e'-c'd')$. 

Finally, if $\lambda=0$, then the pencil is $(ad-\mu b^2, be-cd)$.
\end{proof}

\subsection{Non-reduced discriminants of semi-stable curves}
We proceed to give a complete classification of semi-stable curves with non-reduced discriminants.
An important implication of our analysis is the fact that a semi-stable curve with a non-reduced
discriminant is itself non-reduced. 

\subsubsection{Discriminants of pencils and nets of quadrics in $\PP^4$} 

We begin with a series of simple lemmas, whose proofs we omit.
\begin{lemma}\label{L:rank-4-singular}
Suppose $Q_1$ is a rank $4$ quadric. Then $\det(Q_1+tQ_2)$ has a root of multiplicity two at $t=0$,
or is identically zero, if and only if $Q_2$ vanishes at the vertex of $Q_1$.
\end{lemma}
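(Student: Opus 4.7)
The plan is to reduce the claim to an elementary calculation via the Jacobi formula for the derivative of the determinant. Viewing $Q_1$ as a symmetric $5\times 5$ matrix, the hypothesis $\rank(Q_1)=4$ says that $\ker(Q_1)$ is spanned by a single vector $v$, namely the vertex, and in particular $\det(Q_1)=0$. Thus $\det(Q_1+tQ_2)$ already vanishes at $t=0$, and the question is whether its linear coefficient in $t$ also vanishes (or whether the polynomial itself is identically zero).

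Next, I would invoke the standard identity
\[
\left.\frac{d}{dt}\det(Q_1+tQ_2)\right|_{t=0}=\operatorname{tr}\bigl(\operatorname{adj}(Q_1)\,Q_2\bigr),
\]
where $\operatorname{adj}$ denotes the classical adjoint. Because $\rank(Q_1)=4$, the matrix $\operatorname{adj}(Q_1)$ has rank one and is a nonzero scalar multiple of $vv^{T}$. Consequently
\[
\operatorname{tr}\bigl(\operatorname{adj}(Q_1)\,Q_2\bigr)=\lambda\,v^{T}Q_2\,v=\lambda\,Q_2(v)
\]
for some $\lambda\neq 0$, and this expression vanishes precisely when $Q_2(v)=0$, i.e.\ when $Q_2$ vanishes at the vertex of $Q_1$.

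This handles both directions at once: if $Q_2(v)\neq 0$, then the linear coefficient is nonzero, so $t=0$ is a simple root of $\det(Q_1+tQ_2)$ (in particular the polynomial is not identically zero); if $Q_2(v)=0$, the linear coefficient vanishes, so the root at $t=0$ has multiplicity at least two, with the identically-zero case arising exactly when all higher coefficients happen to vanish as well. There is no serious obstacle here beyond recalling the rank-one description of the adjugate in the corank-one case; as a sanity check I would, if needed, choose coordinates with $v=e_5$, write $Q_1=\bigl(\begin{smallmatrix}A&0\\0&0\end{smallmatrix}\bigr)$ with $A$ invertible and $Q_2=\bigl(\begin{smallmatrix}B&c\\c^{T}&q\end{smallmatrix}\bigr)$, and apply the Schur complement to obtain
\[
\det(Q_1+tQ_2)=t\,\det(A+tB)\bigl(q-t\,c^{T}(A+tB)^{-1}c\bigr),
\]
which manifestly has a double root at $t=0$ (or is identically zero) iff $q=Q_2(v)=0$.
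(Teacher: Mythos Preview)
Your proof is correct. The paper itself omits the proof of this lemma (it is listed among ``simple lemmas, whose proofs we omit''), so there is no original argument to compare against; your use of the Jacobi formula together with the rank-one description of the adjugate in the corank-one case is a clean and standard way to see the claim, and the Schur-complement sanity check confirms it.
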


\begin{lemma}\label{L:singular-pencil}
A pencil $\ell$ of quadrics in $\PP^4$ consists of singular quadrics only if:
\begin{enumerate}
\item[(A)] Quadrics in $\ell$ have a common singular point; or
\item[(B)] Quadrics in $\ell$ contain a common plane; or
\item[(C)] Restricted to a common hyperplane, the quadrics in $\ell$ are singular along a line; 
\end{enumerate}
Furthermore, if $\ell$ satisfies (C) but not (A) or (B), then up to projectivity $\ell=(ac-b^2, ce-d^2)$,
defining the Veronese quartic $V$.
\end{lemma}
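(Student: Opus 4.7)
The plan is to classify singular pencils in $\PP^4$ by the generic rank $r$ of their members. The case $r=4$ will require the most work and is where the Veronese pencil from (C) appears.

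Assume first $r=4$, so a general member $Q_t$ has a unique vertex $v(t)\in\PP^4$. The key step is to show that for $s\neq t$, the line $\overline{v(s)\,v(t)}$ lies in the base locus of $\ell$. Applying Lemma~\ref{L:rank-4-singular} to the sub-pencil $\{Q_s+uQ_t\}$, the identical vanishing of $\det(Q_s+uQ_t)$ forces $Q_t(v(s))=0$ and $Q_s(v(t))=0$; combined with $v(s)\in\Sing(Q_s)$ and $v(t)\in\Sing(Q_t)$, this yields the vanishing of both $Q_s$ and $Q_t$ on $\overline{v(s)\,v(t)}$. Let $\Gamma\subset\PP^4$ denote the closure of the vertex locus. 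If $\Gamma$ is a single point, (A) holds; otherwise $\Gamma$ is a curve and its secant variety $\operatorname{Sec}(\Gamma)$ is contained in the base locus. Since a pencil with a common hyperplane would have generic rank at most $2$, in our situation the base locus is purely $2$-dimensional, so $\dim\operatorname{Sec}(\Gamma)\le 2$. By the secant-defect classification for curves in $\PP^4$, either $\Gamma$ spans a plane $P$ --- in which case $P=\operatorname{Sec}(\Gamma)$ is contained in every member, giving (B) --- or $\Gamma$ is a line.

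The heart of the argument is then the subcase $r=4$, $\Gamma$ a line. Choose coordinates with $\Gamma=\{b=c=d=0\}$ and write a general quadric vanishing on $\Gamma$ as $Q=bL_1+cL_2+dL_3$. The condition that $Q$ have its vertex on $\Gamma$ translates to the $3\times 2$ matrix $M$ of $(a,e)$-coefficients of the $L_i$ being of rank at most $1$, so the line $t\mapsto M(t)$ is contained in the rank-one Segre threefold $\PP^2\times\PP^1\subset\PP^5$. Lines on this Segre are of two types: fibers of the first projection $\{u\}\times\PP^1$, and lines contained in fibers $\PP^2\times\{v\}$ of the second projection. The second type forces a constant vertex and so returns to case (A). The first type, after a linear change of coordinates on $(b,c,d)$ and on $(a,e)$, together with absorption of correction terms into $a$ and $e$, brings the pencil to the normal form
\[
\big\langle\, ac+A_1(b,d),\ ce+A_2(b,d)\,\big\rangle
\]
for binary quadratic forms $A_1,A_2\in\CC[b,d]_2$. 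The generic-rank-$4$ hypothesis forces the line $\langle A_1,A_2\rangle$ to contain non-degenerate binary forms. If this line meets the discriminant conic $\Delta\subset\PP(\Sym^2\CC^2)$ transversally, a further $\operatorname{GL}(2)$-change of coordinates on $(b,d)$ and a rescaling of the pencil basis bring $A_1,A_2$ to $-b^2,-d^2$, identifying $\ell$ with $(ac-b^2,\,ce-d^2)$ and giving (C). In the tangent case, $A_1$ and $A_2$ share a common linear factor in $b,d$, and a direct check exhibits a plane common to every member, placing us in (B).

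The residual cases $r\le 3$ are handled by parallel but simpler arguments. For $r\le 2$, every member factors as a product of two linear forms; for the pencil to remain linear in the parameter, one linear factor must be common, giving (B). For $r=3$, the analogue of Lemma~\ref{L:rank-4-singular} applied to the $2$-dimensional kernels forces each vertex line $\ell(t)\subset\Sing(Q_t)$ to lie in the base locus. Since the cone $Q_t$ cannot contain any line skew to its vertex line $\ell(t)$, every pair $\ell(s),\ell(t)$ must meet; the classical fact that a connected family of pairwise-meeting lines either has a common point or lies in a common plane then places us in (A) or (B), respectively. Throughout, the hardest step is the normal-form analysis in the subcase $r=4$, $\Gamma$ a line, where one must carefully distinguish the Veronese case from the configurations yielding a common plane.
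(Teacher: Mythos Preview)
The paper omits the proof of this lemma, so there is no original argument to compare against. Your strategy of stratifying by the generic rank $r$ is natural, and your treatment of the crucial case $r=4$---where the Veronese pencil arises---is essentially correct. There are minor omissions (you do not explicitly address the degenerate situations where $N(t)$ fails to sweep out a full line on the Segre, or where $A_1,A_2$ fail to span a line in $\PP(\Sym^2\CC^2)$), but these are easily seen to fall under (A) or (B).

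The cases $r\le 3$, however, contain genuine gaps. For $r\le 2$, the assertion that ``one linear factor must be common'' is false: the pencil $\langle x^2,\,y^2\rangle$ has generic rank~$2$ with no common linear factor. The correct observation is that if the generic rank is at most~$2$, then $\ker Q_0$ and $\ker Q_1$ each have dimension at least~$3$ in $\CC^5$, so they intersect nontrivially and (A) holds. For $r=3$, your claim that the vertex line $\ell(t)$ lies in the base locus is not what the analogue of Lemma~\ref{L:rank-4-singular} provides. What one actually obtains from the vanishing of the $u^2$-coefficient of $\det(Q_t+uQ_s)$ is only that $Q_s|_{\ell(t)}$ is a \emph{degenerate} binary quadratic form, not the zero form; and since $(sQ_0+tQ_1)|_{\ell(t)}=0$ automatically, the restrictions $Q_0|_{\ell(t)}$ and $Q_1|_{\ell(t)}$ are always proportional, so the content is merely that this common restriction has rank at most~$1$. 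A clean repair is to observe that generic corank~$\ge 2$ in a $5\times 5$ symmetric pencil forces a minimal index equal to~$0$: two Kronecker blocks of size~$\ge 3$ cannot fit, so there must be a common kernel vector, and (A) holds directly. With this correction the cases $r\le 3$ reduce immediately to (A), and your $r=4$ analysis then carries the full weight of the lemma.
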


\begin{lemma}\label{L:singular-quadrics-P3}
A pencil $\ell$ of quadrics in $\PP^3$ consists of singular quadrics if and only if:
\begin{enumerate}
\item[(A)] Quadrics in $\ell$ have a common singular point; or
\item[(B)] Restricted to a common plane, quadrics in $\ell$ contain a double line. The general 
such $\ell$ is, up to projectivity, $(be, ce-d^2)$.
\end{enumerate}
\end{lemma}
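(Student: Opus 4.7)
The plan is to apply the structure theory of symmetric matrix pencils. The hypothesis that every member of $\ell=\langle Q_1, Q_2\rangle$ is singular is equivalent to $\det(sQ_1+tQ_2)\equiv 0$, so the kernel $K(s,t):=\ker(sQ_1+tQ_2)\subset \CC^4$ has positive dimension for every $[s:t]\in \PP^1$, and there exists a rational section $v\co \PP^1\dashrightarrow \PP^3$ of the kernel bundle. Choose such a section of minimal degree $d$, write $v(s,t)=\sum_{i=0}^d v_i s^{d-i}t^i$ with $v_i\in\CC^4$, and substitute into $(sQ_1+tQ_2)v(s,t)=0$ to obtain $Q_1v_0=0$, $Q_2v_d=0$, and $Q_1v_i+Q_2v_{i-1}=0$ for $1\le i\le d$.

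If $d=0$, then $v_0\in\ker Q_1\cap \ker Q_2$ is a common singular point of all members of $\ell$, yielding case~(A). Otherwise, the Kronecker classification of symmetric pencils forces any nontrivial singular block to have odd size $2d+1\le 4$, hence $d=1$. In that case, $v_0$ and $v_1$ are linearly independent (else the section would reduce to degree~$0$, contradicting minimality), and the relations $Q_1v_0=0$, $Q_2v_1=0$, $Q_1v_1=-Q_2v_0$ hold. Pairing these against $v_0$ and $v_1$ using the symmetry of $Q_1,Q_2$ shows that $\alpha v_0+\beta v_1$ is isotropic for every $Q\in\ell$, so the line $L:=\overline{v_0 v_1}$ lies on every quadric in $\ell$.

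With this in hand, I would choose coordinates $b,c,d,e$ so that $v_0=[1{:}0{:}0{:}0]$, $v_1=[0{:}1{:}0{:}0]$, and $L=\{d=e=0\}$. The inclusion $L\subset Q_i$ forces the $b^2,bc,c^2$ coefficients of each $Q_i$ to vanish, and the kernel conditions kill the $bd,be$ coefficients of $Q_1$ and the $cd,ce$ coefficients of $Q_2$. Writing $Q_1=cL_1(d,e)+R_1(d,e)$ and $Q_2=bL_2(d,e)+R_2(d,e)$, the remaining identity $Q_1v_1=-Q_2v_0$ forces $L_1=-L_2$. If $L_1=L_2=0$, then every $Q\in\ell$ lies in $(d,e)^2$ and so $L$ sits inside the singular locus of every quadric of $\ell$, which again gives case~(A). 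Otherwise, a rescaling of $(d,e)$ normalizes $L_1$ to $e$; absorbing shifts $c\mapsto c+\ast,\, b\mapsto b-\ast$ (with $\ast$ linear in $d,e$) remove the $de$ and $e^2$ monomials from the $R_i$'s; and a final change of basis in the pencil together with a rescaling of $d$ brings $\ell$ into the normal form $(be,\,ce-d^2)$. Restricting to the plane $e=0$ yields $(0,-d^2)$, a double line, verifying~(B) together with the \emph{moreover} clause.

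The main obstacle will be the final normalization step: one must track which changes of $(d,e)$-basis and shifts of $b,c$ preserve the chosen $v_0,v_1$ (and hence $L$), so that the successive reductions compose cleanly into the advertised canonical form rather than leaving a residual modulus that would correspond to several inequivalent pencils. The degree bound $d\le 1$ is the other input that deserves verification; it can either be quoted from the Kronecker theory or checked directly by noting that for $d\ge 1$ the vectors $v_0,\dots,v_d$ must be linearly independent under minimality and then ruling out $d=2,3$ via the recurrences together with symmetry of $Q_1,Q_2$.
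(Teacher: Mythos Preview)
The paper omits the proof of this lemma (it groups this with the two neighboring lemmas and says ``whose proofs we omit''), so there is no argument to compare against. Your Kronecker-pencil approach is correct and supplies what the paper leaves to the reader.

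Two small points. First, you only argue the direction ``all singular $\Rightarrow$ (A) or (B)''; the converse deserves a sentence. Case (A) is trivial, and for (B) any quadric whose restriction to the plane $e=0$ is a multiple of $d^2$ has the form $\alpha d^2+eM(b,c,d,e)$; setting $e=d=0$ and $M(b,c,0,0)=0$ produces a singular point on $L$. Second, in your final normalization, after the shifts you have $Q_1=ce+\mu_1 d^2$ and $Q_2=-be+\mu_2 d^2$, and your ``rescaling of $d$'' tacitly assumes $(\mu_1,\mu_2)\neq(0,0)$. This is harmless, but worth making explicit: if both $\mu_i$ vanish then the $d$-axis is a common singular point, contradicting the minimality $d=1$. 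With that subcase excluded, one may assume $\mu_1\neq 0$ (swap $b\leftrightarrow c$ otherwise), scale $d$ to make $\mu_1=-1$, and replace $Q_2$ by $Q_2+\mu_2 Q_1=-(b-\mu_2 c)e$, then absorb $b-\mu_2 c$ into a new $b$.

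Your two self-identified obstacles are therefore not serious: the degree bound follows because the singular block of minimal index $d$ in a symmetric pencil has size $2d+1$ and must fit inside a $4\times 4$ matrix, while the normalization composes cleanly once the degenerate subcase above is dispatched.
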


Next, we analyze the possibilities for non-reduced discriminants of semi-stable curves. 

\begin{prop}\label{P:discriminant-double-line}
A discriminant quintic of a semi-stable net $\Lambda$ has a double line 
if and only if (up to projectivity) $(ae-bd, ad-bc) \subset \Lambda$ and
$\Lambda$ is of the form 
\begin{equation*}
(ad-bc, be-cd, ae-c^2+bL_1+dL_2)
\end{equation*}
In particular, such $\Lambda$ contains a double conic.
\end{prop}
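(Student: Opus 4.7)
The plan is to exploit the vanishing of two coefficients in the expansion of the discriminant in powers of $y$ to pin down the form of $\Lambda$. After a change of coordinates on $\Lambda$, I would take the doubled linear factor to be $y$ itself, so that
\[
\det(xQ_1+yQ_2+zQ_3) = P_0(x,z) + yP_1(x,z) + y^2 P_2(x,z) + \cdots
\]
with $P_0 \equiv 0$ and $P_1 \equiv 0$. Here $P_0 = \det(xQ_1+zQ_3)$ and $P_1 = \operatorname{tr}\bigl(Q_2\cdot\operatorname{adj}(xQ_1+zQ_3)\bigr)$. The first condition says the pencil $\ell = \langle Q_1, Q_3 \rangle$ consists of singular quadrics; the second couples $Q_2$ to $\ell$, and will let me read off the form of $Q_2$.

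I would apply Lemma \ref{L:singular-pencil} to $\ell$ and rule out cases (A) and (C). In case (A), a common singular point $O$ places $\Lambda$ in the $\rho_4$-unstable locus via Lemma \ref{L:Weights1}(4), contradicting semi-stability. In case (C), the pencil is the Veronese pencil $(ac-b^2, ce-d^2)$; its generic kernel vector is $v(x,z)=(-z,0,0,0,x)^T$, and the identity $v^TQ_2v\equiv 0$ forces $Q_2\in(b,c,d)$. The line $\{b=c=d=0\}$ then lies in the base locus of $\Lambda$, and parameterising the residual quartic on the Veronese shows it meets this line in three points, triggering the instability criterion of Theorem \ref{T:Description}(3). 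In case (B), Lemma \ref{L:reducible-quartic} leaves either $(Q_1,Q_3)=(ad-bc, be-cd)$ or $(ad-\mu b^2, be-cd)$; in the latter, imposing $P_1\equiv 0$ forces $Q_2 \in(b,d,e)$, again placing a line in the base locus along which $C$ becomes non-reduced in a manner falling under Theorem \ref{T:Description}(3), so this subcase is excluded.

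What remains is $(Q_1, Q_3) = (ad-bc, be-cd)$. The generic member $xQ_1+zQ_3$ has rank $4$ with kernel spanned by $v(x,z) = (z^2, 0, xz, 0, x^2)^T$, so $P_1 \equiv 0$ becomes the polynomial identity $Q_2(z^2, 0, xz, 0, x^2)\equiv 0$ in $\CC[x,z]$. Matching monomials of each total degree in $(x,z)$ forces $Q_2$ to have no $a^2$, $ac$, $ce$, $e^2$ terms and to have opposite $ae$ and $c^2$ coefficients. The $ae$ coefficient cannot vanish, for otherwise $\Lambda \subset (b,d)$ and $V(\Lambda)$ would fail to be one-dimensional, contradicting Corollary \ref{C:net=curve}. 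After rescaling we obtain $Q_2 = ae - c^2 + bL_1 + dL_2$ for linear forms $L_1, L_2$, which is the claimed form of $\Lambda$.

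The converse is a reversal of the same computation. For the double conic assertion, restricting $\Lambda$ to the plane $b=d=0$ shows that $Q_1$ and $Q_3$ vanish identically there while $Q_2$ restricts to the conic $ae - c^2$; a local analysis in the transverse coordinates $b,d$ then shows that $C$ contains this conic with multiplicity two. The main technical difficulty I foresee is the rigorous exclusion of cases (C) and (B)(ii): each requires a careful analysis of the base-locus structure forced by $P_1 \equiv 0$ in order to invoke Theorem \ref{T:Description}(3), whereas the central derivation of the form of $Q_2$ reduces to a short adjugate computation.
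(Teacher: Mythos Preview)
Your overall architecture matches the paper's: apply Lemma~\ref{L:singular-pencil} to the pencil $\ell$ underlying the doubled line and eliminate cases (A), (C), and the second alternative of (B) using semi-stability. Your adjugate formulation $P_1=\operatorname{tr}\bigl(Q_2\cdot\operatorname{adj}(xQ_1+zQ_3)\bigr)$ is exactly the mechanism behind Lemma~\ref{L:rank-4-singular}, and your derivation of $Q_2=ae-c^2+bL_1+dL_2$ in the surviving case is correct.

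There is, however, a genuine gap in case~(A). Your argument that the common singular point $O$ forces $\rho_4$-instability relies on $P_1\equiv 0$ yielding $Q_2(O)=0$. This works only when the generic member of $\ell$ has rank~$4$: then $\operatorname{adj}(xQ_1+zQ_3)$ is a nonzero rank-one matrix proportional to $O\otimes O$, and $P_1\propto Q_2(O)$. But if the generic member of $\ell$ has rank $\leq 3$, the adjugate vanishes identically, $P_1\equiv 0$ holds automatically, and you obtain \emph{no} constraint on $Q_2$; in particular $O$ need not be a base point and Lemma~\ref{L:Weights1}(4) does not apply. The paper handles this subcase separately via Lemma~\ref{L:singular-quadrics-P3}: either $\ell$ is singular along a line (destabilized by $\rho_3$ or $\rho_4$), or $\ell$ degenerates to $(be,ce-d^2)$ (destabilized by $\rho_5$). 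You must add this analysis.

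A second, smaller problem: in cases~(C) and~(B)(ii) you invoke Theorem~\ref{T:Description}(3), but that criterion requires the residual curve $C'$ to meet the doubled line in at least $\deg C'-2$ points. In case~(C) the residual is a sextic (not a quartic), so you would need four intersection points; your own count of three falls short. The situation you describe---a double line meeting a residual genus-one sextic in three points---is precisely Theorem~\ref{T:Description}(6)(b), i.e.\ $\rho_6$-instability, not $\rho_3$-instability. The paper simply names the destabilizing one-parameter subgroup ($\rho_6$, respectively $(4,-1,4,-6,-1)$) in each case; you should either do the same or correct the citation to~(6)(b) and verify its hypotheses.
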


\begin{proof}
Suppose $\ell$ is a double line in $\Delta(C)$.
The analysis proceeds according to possibilities for $\ell$ enumerated
in Lemma \ref{L:singular-pencil}.

\smallskip

(A) Suppose all elements of $\ell$ are singular at a point $O$. 
By Lemma \ref{L:rank-4-singular}, $\ell$ can be a double line of $\Delta(C)$ in two cases: 
either $O$ is a base point of $\Lambda$ or 
all quadrics in $\ell$ have rank $\leq 3$. 
The former case is impossible by Lemma \ref{L:Weights1} (4). In the latter case
 Lemma \ref{L:singular-quadrics-P3} says that either all quadrics in $\ell$ are singular along a line, 
in which case $\Lambda$ is destabilized by Lemma \ref{L:Weights1} (3)
or $\ell$ is up to projectivity (a degeneration of) $(be, ce-d^2)$, in which case $\Lambda$ is
 destabilized by $\rho_5=(3,3,3,-2,-7)$.

\smallskip

(B) Suppose $\ell$ is a pencil of quadrics containing a plane, say $b=d=0$, 
and having no common singular points.  
Then by Lemma \ref{L:reducible-quartic} either $\ell=(ad-bc, be-cd)$ or $\ell=(ad-b^2, be-cd)$, up to projectivity.
However, if  $\ell=(ad-b^2, be-cd)$, then the singular points of quadrics in $\ell$ trace out the line $b=d=e=0$, 
which must then fall in the base locus of the net because $\ell$ is a double line of $\Delta(C)$.
Such a net is destabilized by the 1-PS with weights $(4,-1,4,-6,-1)$. 

If $\ell=(ad-bc, be-cd)$, then $b=d=ae-c^2=0$ is the conic along 
which elements of $\ell$ are singular, so this conic must be in the base locus of $\Lambda$. 
The claim follows.

\smallskip

(C) Suppose that we are not in the cases (A) or (B).
Then $\ell=(ae-c^2, be-d^2)$, up to projectivity. Since the generic quadric in $\ell$ has rank $4$ and the 
vertices of quadrics in $\ell$ vary along $c=d=e=0$, we deduce that $c=d=e=0$ must be in the base locus of the net.
Such $\Lambda$ is destabilized by $\rho_6=(4,4,-1,-1,-6)$.
\end{proof}

The converse to the above result is the following.
\begin{prop}
\label{P:double-conic}
A semi-stable curve with a double conic component is projectively equivalent to the intersection 
of the quadrics $Q_1=ad-bc, Q_2=be-cd, Q_3=ae-c^2+bL_1+dL_2$, where $L_i$ are not simultaneously zero.
\end{prop}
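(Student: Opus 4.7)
The overall strategy is to pin down $\Lambda$ via the classification of pencils of quadrics through a plane (Lemma \ref{L:reducible-quartic}) combined with the vanishing of every quadric in $\Lambda$ on the underlying reduced conic $D$. I choose coordinates so that $D = \{b = d = ae - c^2 = 0\}$ lies in the plane $P = \{b = d = 0\}$, and begin by identifying the pencil $\Lambda_P := \{Q \in \Lambda : P \subset Q\}$. Every $Q \in \Lambda$ vanishes on $D$ (since $D \subset 2D \subset C$), so $Q|_P$ is a conic on $P$ vanishing on the smooth conic $D$, hence a scalar multiple of $(ae - c^2)|_P$. The linear map $\Lambda \to \CC$ recording this scalar has kernel $\Lambda_P$ of dimension $\geq 2$; on the other hand, $\dim \Lambda_P = 3$ would mean $\Lambda \subset I_P$, destabilizing $\Lambda$ via a $\rho_2$-type one-parameter subgroup by Lemma \ref{L:Weights1}(2a). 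Hence $\dim \Lambda_P = 2$.

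Next, Lemma \ref{L:reducible-quartic} applied to $\Lambda_P$ gives, up to a projective change of coordinates preserving $P$, either $\Lambda_P = (ad - bc, be - cd)$ or $\Lambda_P = (ad - \mu b^2, be - cd)$. The first case has the vertex locus of rank-$4$ quadrics equal to the conic $\{b = d = ae - c^2 = 0\}$, which is precisely our $D$. In the second case, the vertex locus is the line $\{b = d = e = 0\}$; the residual component $S'$ of $V(\Lambda_P)$ (other than $P$) meets $P$ in a pair of lines meeting at a point, so $D \not\subset S'$. For $D$ to appear in $C$ with multiplicity two one would need $D$ contained in both components $P$ and $S'$ of $V(\Lambda_P)$, so that cutting with $Q_3$ produces $D$ twice; but in case (ii), only $P \cap V(Q_3)$ contributes $D$, while $S' \cap V(Q_3)$ contains no copy of $D$. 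This contradicts the double-conic hypothesis and forces us into case (i).

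Finally, writing $Q_3 \in \Lambda \setminus \Lambda_P$ and parametrizing $D$ by $[1:0:t:0:t^2]$, the vanishing $Q_3|_D \equiv 0$ translates, after inspecting coefficients, into $Q_3 = \lambda(ae - c^2) + bL_1 + dL_2$ for some $\lambda \in \CC$ and linear forms $L_1, L_2$. The scalar $\lambda$ is nonzero, else $Q_3 \in I_P = \Lambda_P$; rescaling, $\lambda = 1$. The case $L_1 = L_2 = 0$ is excluded because $\Lambda = (ad - bc, be - cd, ae - c^2)$ is the ideal of the cubic scroll, whose vanishing locus is a surface rather than a complete-intersection curve. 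The main obstacle is the geometric argument in case (ii): one must verify carefully that the residual surface $S'$ meets $P$ only along lines (not along any conic), so $D$ cannot be doubled in $C$.
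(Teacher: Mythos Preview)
Your overall strategy is sound and close in spirit to the paper's, but there is a genuine gap in your handling of case (i). You begin by fixing coordinates so that $D=\{b=d=ae-c^2=0\}$, and then invoke Lemma~\ref{L:reducible-quartic} to put $\Lambda_P$ in normal form. However, the coordinate change supplied by that lemma preserves the plane $P=\{b=d=0\}$ but has no reason to preserve the specific conic $\{ae-c^2=0\}$ inside $P$; in the new coordinates, $D$ is \emph{some} smooth conic in $P$ whose equation you no longer control. Thus your assertion that the vertex conic $\{b=d=ae-c^2=0\}$ of the normalized pencil ``is precisely our $D$'' is unjustified as written, and the final paragraph (which parametrizes $D$ by $[1:0:t:0:t^2]$) rests on exactly this identification.

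The gap is easily repaired by applying your own multiplicity argument from case (ii) to case (i) as well: in case (i), $V(\Lambda_P)=P\cup S'$ with $S'$ the cubic scroll and $S'\cap P=\{ae-c^2=0\}$; since $D$ appears doubly in $C$, the same reasoning forces $D\subset S'$, hence $D\subset S'\cap P$, and as both are smooth conics in $P$ they coincide. The paper takes a slightly different route: it observes that every point of the double conic is a singular point of $C$, hence a singular point of some quadric in $\Lambda$; since any quadric outside $\Lambda_P$ cuts $P$ in the smooth conic $D$ and is therefore smooth along $D$, the singular quadric must lie in $\Lambda_P$, so $D$ is traced out by the vertices of quadrics in $\Lambda_P$. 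This single observation simultaneously rules out case (ii) of Lemma~\ref{L:reducible-quartic} (where the vertex locus is a line) and pins down $D$ in case (i), with no separate multiplicity analysis needed. A minor additional point: you should check that $\Lambda_P$ has no common singular point before invoking Lemma~\ref{L:reducible-quartic}; this follows once you know the vertex locus contains a smooth conic, or alternatively from semi-stability via Lemma~\ref{L:Weights1}.
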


\begin{proof} Let $C$ be a semi-stable curve with a double structure supported on a smooth conic $X$. 
Denote by $\Lambda$ the associated net of $C$.
Let $\ell \subset \Lambda$ be the pencil of quadrics containing the plane $P_X$ spanned by $X$. 
Note that any quadric not in $\ell$ intersects $P_X$ in $X$, and so is smooth along $X$.
Since every point of $X$ is a singular point of $C$, for every point of $X$
there must be a quadric in $\ell$ singular at that point. 
It follows that $X$ is traced out by vertices of quadrics in $\ell$. 
(We note in passing that this implies that $\ell$ appears with 
multiplicity $2$ in $\Delta$.)
By Lemma \ref{L:reducible-quartic}, we must have $\ell=(ad-bc, be-cd)$. 
Then the singular points of quadrics in $\ell$ trace out the conic $b=d=ae-c^2=0$. 
It follows that $\Lambda=(ad-bc, be-cd, ae-c^2+bL_1+dL_2)$.
\end{proof}

\begin{remark} \label{R:double-conic} 
In Proposition \ref{P:double-conic}, the intersection of $Q_1$ and $Q_2$ is the union 
of the plane $b=d=0$ and the cubic scroll $(ad-bc, be-cd, ae-c^2)$. 
The scroll is the blow-up of $\PP^2$ at a point and is embedded in $\PP^4$ by $2H-E$, where $H$ is
the class of a line and $E$ is the class of the $(-1)$-curve.
The scroll meets the plane $b=d=0$ in the conic $ae-c^2=0$. 
The quadric $Q_3$ intersects the scroll in this conic (of class $H$ on the scroll) 
and in a curve of class $3H-2E$, which is a rational normal quartic in $\PP^4$ 
meeting the conic $b=d=ae-c^2=0$ in three points. Thus any curve described by Proposition \ref{P:double-conic} looks
like a double conic meeting a rational normal quartic in three points. 
\end{remark}

\begin{prop}\label{P:discriminant-double-conic}
The discriminant quintic of a semi-stable curve $C$ has a double conic only in the following cases:
\begin{enumerate}
\item Up to projectivity, the net of quadrics containing $C$ is $$I_R:=(ac-b^2, ae-2bd+c^2, ce-d^2).$$
\item Up to projectivity, the net of quadrics containing $C$ is $$I_{DL}:=(ad, ae+bd-c^2, be).$$
\item The curve $C$ has a double line meeting the residual genus $2$ 
curve in $2$ points; such $C$ isotrivially degenerates to the curve defined by $I_{DL}$.
\item The curve $C$ contains a double twisted cubic and is defined by Equation \eqref{E:dtc}.
\end{enumerate}
\end{prop}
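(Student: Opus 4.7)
Let $D\subset\PP\Lambda\cong\PP^2$ be the reduced conic such that $D^2\mid \Delta(C)$, and for $t\in D$ let $Q_t$ be the corresponding quadric. My plan is to split the argument on the generic rank of $Q_t$ along $D$. Suppose first that the generic $Q_t$ has rank $4$, with vertex $v_t$. Differentiating the vertex relation $Q_t v_t=0$ along $D$ shows that the tangent-to-$D$ direction $R\in\Lambda$ at $Q_0$ satisfies $R(v_0)=0$, so the functional $Q\mapsto Q(v_0)$ on $\Lambda$ already vanishes on the two\nb-dimensional subspace $\CC Q_0\oplus\CC R$. The hypothesis that $D$ contributes multiplicity $\geq 2$ to $\Delta$, combined with Lemma \ref{L:rank-4-singular}, forces one further transverse quadric to vanish at $v_0$, and hence every quadric in $\Lambda$ does. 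Thus each $v_t$ is a base point of $\Lambda$, and the closure $V$ of $\{v_t\mid t\in D\}$ is an irreducible rational curve contained in $C$.

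Since $\deg C=8$ and $V\subseteq C$, the nondegenerate rational curve $V$ has degree at most $4$, so $V$ is a line, a smooth conic, a twisted cubic, or a rational normal quartic. If $V$ is a line, a coordinate choice adapted to $V$ together with semi\nb-stability from Theorem \ref{T:subgroups} would reduce $\Lambda$ either to $I_{DL}$, giving case (2), or to a generic deformation of it, giving case (3). If $V$ is a smooth conic, then for every $Q\in\Lambda$ the intersection of $Q$ with the plane $P$ spanned by $V$ contains $V$ and hence equals $V$; this ribbon shape, combined with Lemma \ref{L:veronese}, pins down $\Lambda=I_R$ and yields case (1). If $V$ is a twisted cubic, the pencil of $\Lambda$ which is singular along $V$ is classified by Lemma \ref{L:reducible-quartic}(1), leaving only the linear parameter $L$ of Equation \eqref{E:dtc} and producing case (4). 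The rational normal quartic possibility should be eliminated by exhibiting an explicit destabilizing one\nb-parameter subgroup from Theorem \ref{T:subgroups}.

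In the alternative case, when the generic $Q_t$ along $D$ has rank $\leq 3$, I would apply Lemmas \ref{L:singular-pencil} and \ref{L:singular-quadrics-P3} to pencils $\ell\subset D$. The possibilities of a common singular point or a common plane among the quadrics of $\ell$ produce a destabilizing one\nb-parameter subgroup via Lemma \ref{L:Weights1}, leaving only the configuration that places $\Lambda$ on the Veronese quartic surface of Lemma \ref{L:veronese}, which in turn forces $\Lambda=I_R$ and reproduces case (1).

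The main obstacle will be the rank\nb-$4$ subcase: for each of the four possible degrees of $V$, one must adapt coordinates, enumerate admissible nets, check all the one\nb-parameter subgroups in Theorem \ref{T:subgroups}, and identify the resulting projective class with one of (1), (3), (4), or an exclusion. In particular, separating the generic members of case (3) from the isotrivial limit $I_{DL}$ of case (2), and verifying that no residual parameters survive after imposing semi\nb-stability in cases (1) and (4), will require careful bookkeeping with the destabilizing subgroups $\{\rho_i\}_{i=1}^{12}$.
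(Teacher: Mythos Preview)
Your high-level split into the cases ``generic rank $4$ along the conic'' versus ``generic rank $\leq 3$'' matches the paper, and in the rank-$4$ case your idea of tracking the vertex curve $V$ and then casing on $\deg V$ is exactly what the paper does. However, two of your subcases go wrong. When $V$ is a smooth conic, your appeal to Lemma~\ref{L:veronese} is misplaced: that lemma concerns the Veronese quartic surface $(ac-b^2,ce-d^2)$, which has a double \emph{line}, not a double conic, and does not pin down $I_R$ in this situation. The paper instead invokes Proposition~\ref{P:double-conic}, which classifies semi-stable curves containing a double conic and yields the family of Equation~\eqref{E:double-conic}, not $I_R$. Also, in the line case the paper defers to Proposition~\ref{P:double-line}, which does the substantial work of showing the isotrivial degeneration to $I_{DL}$; your proposed coordinate reduction does not indicate how this would go.

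The more serious gap is in the rank-$\leq 3$ case. Your proposal to ``apply Lemmas~\ref{L:singular-pencil} and \ref{L:singular-quadrics-P3} to pencils $\ell\subset D$'' is ill-posed, since $D$ is a conic and contains no lines. What the paper actually analyzes is the \emph{residual} line $\ell$ of $\Delta(C)=2q+\ell$, and your claim that a common singular point along $\ell$ forces instability is false: for $I_{DL}=(ad,\,ae+bd-c^2,\,be)$ one computes $\Delta\sim y(xz-y^2)^2$, the residual pencil is $(ad,be)$ with common singular point $[0{:}0{:}1{:}0{:}0]$, and yet $I_{DL}$ is semi-stable. This is precisely the subcase (the paper's Case~(A.2)) that produces conclusion~(2) of the proposition, so your argument would miss $I_{DL}$ entirely. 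The paper's key device here---absent from your proposal---is to parametrize the conic $q$ as $[s{:}t]\mapsto s^2Q_1+t^2Q_2+stQ_3$ with $Q_1,Q_2,Q_3$ chosen via tangent lines to $q$, and then to impose the rank-$3$ condition on this one-parameter family to extract explicit normal forms (Claim~\ref{matrix-claim}). Without this, the rank-$3$ case cannot be brought to the required conclusions.
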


\begin{proof} 
Let $C$ be a curve with a discriminant
$\Delta(C)=2q+\ell$, where $q$ is a conic and $\ell$ is the residual pencil.

\smallskip

{\em Case 1:} Suppose first that the generic quadric in $q$ has rank $3$.
The further analysis breaks into the cases enumerated
by Lemma \ref{L:singular-pencil}:
\begin{enumerate}
\item[(A)] All quadrics in $\ell$ have a common singular point; or
\item[(B)] All quadrics in $\ell$ contain a common plane; or 
\item[(C)] $\ell=(ac-b^2, ce-d^2)$, defining the Veronese quartic $V$.
\end{enumerate}
\noindent
\textbf{Observation:} Before proceeding with a case-by-case analysis, we make an elementary observation about 
conics in $\Lambda$. Namely, suppose $q \subset \Lambda$ is a conic. Suppose $[Q_1]$ and
$[Q_2]$ are two distinct point on $q$ and let $[Q_3]\in \Lambda$ be the point of the intersection 
of the tangent lines to $q$ at $[Q_1]$ and $[Q_2]$. Then $q$ can be explicitly 
parameterized by $\PP^1$ via $[s:t] \mapsto [s^2 Q_1+t^2Q_2+stQ_3]$. We also note that $Q_1, Q_2$, and $Q_3$ obtained 
by this construction span the net.

\smallskip

Case (A): The quadrics in $\ell$ have a common singular point $b=c=d=e=0$. 
We can assume that $\ell=(Q_2,Q_3)$ and 
$Q_1=a^2+\overline Q_1(b,c,d,e)$. Note that with this choice of coordinates, every quadric in the net 
can be written as a linear combination of $a^2$ and a quadric in variables $b,c,d,e$. Since elements
of $q$ have generic rank $3$, all elements of $q$ have form $\lambda a^2+R(b,c,d,e)$, where 
$\lambda\in \CC$ and 
$R(b,c,d,e)$ are generically rank $2$ quadrics with no common singular point. 
In particular, it follows that quadrics corresponding to $q\cap \ell$ are rank $2$ quadrics in variables
$b,c,d,e$. 

Given any two points $[Q_1], [Q_2] \in q$, we can choose coordinates $b,c,d,e$ so that
$Q_1=bc \pmod{a^2}$ and $Q_2=de \pmod{a^2}$. 
By the observation above every element of $q$ can be written as $s^2 Q_1+t^2Q_2+stQ_3$, 
where $[s:t]\in \PP^1$, and where $[Q_3]$ is the point of intersection of the tangent lines
to $q$ at $[Q_1]$ and $[Q_2]$. Set $\overline{Q}_3:=Q_3 \pmod a$.
Then $s^2 bc+t^2 de+st \overline{Q}_3$ is a rank $2$ quadric for all $[s:t]\in \PP^1$.

\begin{claim}\label{matrix-claim}
$\overline{Q}_3=\mu be+\frac{1}{\mu}cd$, possibly after an appropriate renaming of variables. 
\end{claim}
\begin{proof} Let $M$ be the symmetric matrix associated to $s^2 bc+t^2 de+st \overline{Q}_3$.
Analyzing $t^5s$ and $s^5t$ terms of the $3\times 3$ minors of $M$, 
we see that $\overline{Q}_3\in (bd,be,cd,ce)$. Writing $\overline{Q}_3=x bd+y be+zcd+wce$, the upper-left
$3\times 3$ minor of $M$ is $2xz s^4t^2$. Hence $xz=0$. Similarly,
one shows that $xy=zw=yw=0$. Without loss of generality, we can assume $x=w=0$. Computing 
the remaining $3\times 3$ minors we obtain $y(1-yz)=z(1-yz)=0$. The claim follows.
\end{proof}

We now consider separately two subcases:

Case (A.1): $\ell$ is tangent to $q$. 
We take $[Q_1]$ to be the point of tangency and $[Q_2]$ to be any point on the conic.  
By above, we can assume that $Q_1=de$, $Q_2=bc+a^2$, and $Q_3=be+cd$. 
(The last equality comes from $\overline{Q}_3=be+cd$ and $Q_3\in \ell$).
This net is destabilized by $\rho_2=(2,2,2,-3,-3)$.

Case (A.2): $\ell$ is not tangent to $q$. Let $q\cap \ell=\{[Q_1], [Q_2]\}$.
As above, we let $[Q_3]$ be the point of
intersection of tangents to $q$ at $[Q_1]$ and $[Q_2]$. Letting 
$Q_1=bc$ and $Q_2=de$, we can write $Q_3=\mu be+\frac{1}{\mu}cd+a^2$ by Claim \ref{matrix-claim}. 
By appropriately renaming and scaling the variables, we obtain the net $I_{DL}$.

\medskip

Case (B): The quadrics in $\ell$ contain a plane $P$ and have no common singular points.
If $\ell$ intersects $q$ in two distinct points, then we are in case (A). 
(Indeed, the quadrics in $q\cap \ell$ have rank $3$ and any two rank $3$ quadrics 
containing a common plane have a common singular point.)

Suppose $\ell$ is tangent to $q$. 
Let $[Q_1]$ be the point of tangency. Then $Q_1$ has rank at most $3$ and contains the plane
$P$. We can choose coordinates so that $P$ has equation $d=e=0$ and $Q_1=ce+\lambda d^2$, for some
$\lambda\in \CC$. Let $[Q_2]\in q\smallsetminus \ell$ and let $[Q_3]\in \ell$ 
be the point of intersection of the tangents
to $q$ at $[Q_1]$ and $[Q_2]$ (note that the tangent line to $q$ at $[Q_1]$ is $\ell$). 
Then $Q_3=dL_1+eL_2$, for some
linear forms $L_1, L_2$. Since $Q_1$ and $Q_3$ have no common singular points, the 
system of equations $c=d=e=L_1=L_2=0$ has no solutions. Thus we can 
we can assume $L_1=b$ and $L_2=a$. 

All quadrics in $q$ have form $s^2Q_1+t^2Q_2+stQ_3$, where $[s:t]\in \PP^1$ and have 
rank $3$ by our assumption. The analysis of $t^7s$ terms in the vanishing $4\times 4$ 
minors of the symmetric matrix of $s^2Q_1+t^2Q_2+stQ_3$ shows that $Q_2\in (c,d,e)$. 

It follows that $\Lambda=(Q_1,Q_2,Q_3)$, where $Q_1=ce+\lambda d^2, Q_2=ae+bd$, and $Q_3\in (c,d,e)$.
Thus $\Lambda$ is destabilized by $\rho_3=(3,3,-2,-2,-2)$.

\medskip 

Case (C): Suppose $\ell=(ac-b^2, ce-d^2)$.  
By the observation above, there exists $Q_3$ such that 
$s^2(ac-b^2)+t^2(ce-d^2)+stQ_3$
has rank $3$ for $s$ and $t$. The vanishing of the $4\times 4$ minors of 
the symmetric matrix of $s^2(ac-b^2)+t^2(ce-d^2)+stQ_3$ implies that
$Q_3=ae-2bd+c^2$. Hence the net is $I_R=(ac-b^2, ae-2bd+c^2, ce-d^2)$.

\smallskip

{\em Case 2:} We now consider the case when the generic quadric in $q$ has rank $4$. 
Taking a general pencil in $\Lambda$, we see that $C$ lies on the del Pezzo $P(1,2,2)$. 
Furthermore, by Lemma \ref{L:rank-4-singular} 
the vertices of quadrics in $q$ form a curve $D$ in the base locus of the net. 
In particular, $C$ is non-reduced. 

We now proceed to consider different cases according to the degree of $D$. 

$\bullet$ If $D$ is a line, then we are done by Proposition \ref{P:double-line} below, 
which describes all semi-stable curves containing a double line on $P(1,2,2)$.

$\bullet$ If $D$ is a conic, then we are done by Proposition \ref{P:double-conic}, 
which describes all semi-stable curves containing
double conics.

$\bullet$ 
If $D$ is a twisted cubic, then the residual component of $C$ is a conic.
Let $\ell \subset \Lambda$ be the pencil of quadrics containing the plane spanned by this conic.
The intersection of the quadrics in $\ell$ is a union of a plane and a cubic scroll. 
The general such $\ell$ has equation $(ad-bc, be-cd)$ (cf. Lemma \ref{L:reducible-quartic}) with the scroll
being $(ad-bc, be-cd, ae-c^2)$. 
Since every double twisted cubic on a scroll is a double hyperplane section, we conclude 
that every semi-stable curve with a double twisted cubic component is a degeneration of a curve 
defined by Equation \eqref{E:dtc}.

$\bullet$ If $D$ is a quartic, then by Theorem \ref{T:Description} (1) $D$ must be a rational normal quartic. 
We now consider the possibilities for $\ell$ enumerated in Lemma \ref{L:singular-pencil}. Case (B) is clearly impossible. 
In Case (A), all quadrics in $\ell$ are singular at some point $O$. 
In this case, $D$ lies on a cone over $E \subset \PP^3$ with a vertex at $O$, where $E$ is 
a complete intersection of two quadrics in $\PP^3$.
Since the arithmetic genus of $E$ is $1$ and $E$ is a projection of a rational normal quartic, 
we see that $E$ is singular.  
We conclude that $O$ lies on a chord (or a tangent line) of $D$. This immediately leads to a contradiction, as 
$D$ is a complete intersection of three quadrics not passing through $O$. 

Finally, in Case (C) $D$ lies on the Veronese $(ac-b^2, ce-d^2)$. 
By Lemma \ref{L:veronese} the ideal of $C$ is $I_R$. This finishes the proof. 
\end{proof}

\subsubsection{Double lines} 

Let $\pi\co S\ra \PP^2$ be the blow up of a plane at five points $\{p, q_1, r_1, q_2, r_2\}$, with $r_i$  
infinitesimally close to $q_i$ for $i=1,2$. 
Set $E_0:=\pi^{-1}(0)$ and let $\pi^{-1}(q_i)=F_i\cup G_i$, where $F_{1}$, $F_{2}$ are the $(-2)$-curves 
and $G_{1}$, $G_{2}$ are the $(-1)$-curves on $S$. Denote by $H$ the class of a line on $S$.
If $\phi\co S\ra \PP^4$ is the anti-canonical map, then 
$\phi(S)$ is the del Pezzo $P_1$ described in Section \ref{S:del-pezzos}. 

\begin{prop}
\label{P:double-line}
Let $C$ be a semi-stable curve on $P_1$ with a double line component. 
Then $C$ isotrivially degenerates to the curve defined by
$$I_{DL}=(ad, ae+bd-c^2, be).$$
\end{prop}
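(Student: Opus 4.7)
The plan is to exhibit an explicit one-parameter subgroup $\rho$ whose limit sends $C$ to the curve $I_{DL}$. The key subgroup is $\rho(t) = \operatorname{diag}(t, t, 1, t^{-1}, t^{-1})$, which stabilizes $I_{DL}$: indeed, the monomials $ad, ae, bd, be, c^2$ all have $\rho$-weight $0$, while $a^2, ab, b^2, ac, bc$ have strictly positive weight and $cd, ce, d^2, de, e^2$ have strictly negative weight. The limit as $t\to 0$ will pick out the weight-zero part of $\Lambda$, so the strategy reduces to arranging that $\Lambda$ contains no monomials of negative $\rho$-weight while its weight-zero projection is precisely $I_{DL}$.

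Let $\ell$ be the doubled line component of $C$. Using the $\PGL(5)$-action together with the automorphism group of $P_1$ (which contains a one-parameter torus arising from rescaling of the $(-2)$-curves $F_1, F_2$), I would first choose coordinates so that $\ell = \{c = d = e = 0\}$. Since $\ell$ is in the base locus of the defining net $\Lambda$, every quadric in $\Lambda$ lies in the ideal $(c, d, e)$. By the preceding analysis in the proof of Proposition \ref{P:discriminant-double-conic}, there is a pencil $q \subset \Lambda$ of rank-$4$ quadrics with vertices sweeping out $\ell$. Using Lemma \ref{L:rank-4-singular} together with a matrix computation in the spirit of Claim \ref{matrix-claim}, I would normalize $q = (ad, be)$ after an appropriate coordinate adjustment along $\ell$.

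Next, I would select a third generator $Q_2$ of $\Lambda$ complementary to $q$. The double-line condition along $\ell$ forces $Q_2$ to contain a nonzero $c^2$ term (otherwise $\Lambda$ would be destabilized by a 1-PS of smaller index). Using the remaining coordinate freedom in concert with the semi-stability criteria from Theorem \ref{T:Description}, I would arrange $Q_2 = ae + bd - c^2 + R$, where $R$ consists only of monomials of strictly positive $\rho$-weight, namely $R \in \operatorname{span}(a^2, ab, b^2, ac, bc)$. With $\Lambda$ in this normal form, applying $\rho(t)$ and letting $t\to 0$ sends every weight-positive correction to zero while leaving the weight-zero parts fixed, producing the ideal $(ad, ae+bd-c^2, be) = I_{DL}$.

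The main obstacle is the normalization step for $Q_2$, specifically the elimination of all monomials in $cd, ce, d^2, de, e^2$. Each such term corresponds to a specific geometric configuration of $C$ that must be ruled out or normalized away: a $cd$ or $ce$ term corresponds to the residual curve $D = C - 2\ell$ concentrating extra intersections on $\ell$ (controlled by Theorem \ref{T:Description}(3)); a $d^2, de, e^2$ term corresponds to configurations like a tangent residual genus-two curve (Theorem \ref{T:Description}(9)) or a residual arithmetic genus-one sextic meeting $\ell$ too often (Theorem \ref{T:Description}(6b)). The torus of automorphisms of $P_1$ is precisely what is needed to absorb the remaining ambiguity and put $Q_2$ into the required form.
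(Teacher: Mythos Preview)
Your approach differs fundamentally from the paper's, and it has a genuine gap.

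First, a concrete error: you claim there is a \emph{pencil} $q\subset\Lambda$ of rank-$4$ quadrics whose vertices sweep out the line $\ell$, and you normalize it to $q=(ad,be)$. But a general element $\alpha\,ad+\beta\,be$ of this pencil has vertex at the single point $[0{:}0{:}1{:}0{:}0]$, not at a varying point of $\ell=\{c=d=e=0\}$. In fact, for $I_{DL}$ itself the quadrics singular at points of $\ell$ trace out a \emph{conic} in $\Lambda$ (parametrized by $[s{:}t]\mapsto -t^2(ad)-s^2(be)+st(ae+bd-c^2)$), not a pencil. So the normalization $q=(ad,be)$ is not compatible with your own setup, and the invocation of Claim~\ref{matrix-claim} is misplaced.

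Second, the step you yourself flag as the main obstacle --- eliminating all monomials $cd,ce,d^2,de,e^2$ from a third generator --- is not carried out. You gesture at ``the torus of automorphisms of $P_1$,'' but you never explain which coordinate changes preserve both the line $\ell$ and the normalized pencil while killing these terms. Without this, the $\rho$-limit need not be $I_{DL}$. Moreover, your argument never uses in an essential way the hypothesis that $C$ lies on $P_1$; that hypothesis is what makes the problem tractable.

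The paper's proof is entirely different and much more geometric. It works on the blow-up $S\to\PP^2$ whose anti-canonical image is $P_1$, and classifies which $(-1)$-curves $L$ on $S$ can occur as the doubled line. An intersection-number count shows $R=C-2L$ meets $L$ four times, which forces (by Theorem~\ref{T:Description}(3)) a $(-2)$-curve into $R$; this leaves only two candidate classes for $L$. One is ruled out by Theorem~\ref{T:Description}(6)(b). For the surviving class, the paper invokes the isotrivial degeneration $P_1\rightsquigarrow P_0$ established in Section~\ref{S:del-pezzos} and computes the limit of $C$ directly on $P_0$, where it is visibly $I_{DL}$. The degeneration is thus produced at the level of the ambient surface, not by normalizing equations.
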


\begin{proof} 
Suppose $2L+R\in \vert -2K_S\vert$ is a divisor on $S$ such that $\phi(2L+R)$ is a semi-stable curve and $\phi(L)$ is a line.
Then $L$ is a $(-1)$-curve and $R$ meets $L$ in four points, counting multiplicities. 
But by Theorem \ref{T:Description} (3) $\phi(L)$ cannot meet $\phi(R)$ in four or more points. 
It follows that one of the irreducible components of $R$ is a $(-2)$-curve meeting $L$. 
The only $(-1)$-curves meeting $(-2)$-curves on $S$ are, up to symmetries:
\begin{enumerate}
\item[(i)] $L=G_1$ meeting the $(-2)$ curve $F_1$.
\item[(ii)] $L=H-F_1-G_1-F_2-G_2$ meeting both $(-2)$-curves $F_1$ and $F_2$.
\end{enumerate}
However, in case (i), we see that $R-F_1$ has arithmetic genus one and meets 
$L+F_1$ in three points. It follows that $\phi(R)=\phi(R-F_1)$ meets the double line $\phi(L)=\phi(L+F_1)$ in three points. 
It follows that $C$ is unstable by Theorem \ref{T:Description} (6)(b).

In case (ii), we see that $R-F_1-F_2=4H-2E_0-F_1-2G_1-F_2-2G_2$ is a genus two curve 
meeting $L+F_1+F_2$ in $2$ points.
Recall that there is an isotrivial degeneration of $P_1$ to $P_0$. 
Under this degeneration, $L$ is fixed and the limit of the residual genus two component is 
\begin{equation*}
\underbrace{(H-F_1-2G_1-E_0)}_{\text{(-2)-curve}}+\underbrace{(H-F_2-2G_2-E_0)}_{\text{(-2)-curve}}
+2E_0+(H-E_0)+(H-E_0).
\end{equation*}
It follows that under the degeneration of $P_1$ to $P_0$, $C$ is a union of two double lines of class $E_0$ and $H-F_1-G_1-F_2-G_2$, respectively,
and two conics of class $H-E_0$.
A simple computation shows that in appropriate coordinates $C$ is given by the ideal $I_{DL}=(ad, ae+bd-c^2, be)$.
\end{proof}

We summarize the discussion of this section in the following result.
\begin{prop}
\label{P:reduced-discriminant-2}
Suppose $C$ is a semi-stable curve. If $\Delta(C)$ is non-reduced, then $C$ is non-reduced.
\end{prop}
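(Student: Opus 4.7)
The plan is to run a short case analysis on the non-reduced component of $\Delta(C)$ and invoke the two preceding propositions.

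First I would observe that since $\Delta(C)$ is a plane quintic, any irreducible component appearing with multiplicity $\geq 2$ must have degree $\leq 2$ (otherwise the total degree would exceed $5$). Therefore, if $\Delta(C)$ is non-reduced, it contains either a double line or a double conic as a subscheme. This reduces the statement to two cases.

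Next I would handle each case directly. If $\Delta(C)$ has a double line, then Proposition \ref{P:discriminant-double-line} applies: the net $\Lambda$ defining $C$ is projectively equivalent to
\[
(ad-bc,\ be-cd,\ ae-c^2+bL_1+dL_2),
\]
and in particular contains the double conic described explicitly there. Hence $C$ has a non-reduced component, so $C$ is non-reduced. If instead $\Delta(C)$ has a double conic, then Proposition \ref{P:discriminant-double-conic} identifies $C$ (up to isotrivial degeneration) as one of the four explicit curves listed there: the balanced ribbon $I_R$, the double-line configuration $I_{DL}$, a curve containing a double line meeting a residual genus $2$ component in two points, or a curve containing a double twisted cubic given by Equation \eqref{E:dtc}. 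Each of these is manifestly non-reduced.

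Since the two cases are exhaustive and in both cases $C$ is non-reduced, the proposition follows. The main (negligible) obstacle is simply verifying that the degree bound really forces the multiple component to have degree $\leq 2$; everything else is an immediate appeal to Propositions \ref{P:discriminant-double-line} and \ref{P:discriminant-double-conic}.
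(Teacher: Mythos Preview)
Your proof is correct and follows essentially the same approach as the paper: reduce to the dichotomy of a double line versus a double conic in $\Delta(C)$, then invoke Propositions \ref{P:discriminant-double-line} and \ref{P:discriminant-double-conic}. You add the explicit degree-count justification for why these two cases are exhaustive, which the paper simply asserts.
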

\begin{proof} $\Delta(C)$ is non-reduced if and only if it has a double line or a double conic. 
The result now follows immediately from Propositions \ref{P:discriminant-double-line}
and \ref{P:discriminant-double-conic}.
\end{proof}
\begin{corollary}\label{C:smooth-DP}
A reduced semi-stable curve lies on a smooth quartic del Pezzo.
\end{corollary}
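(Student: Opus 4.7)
The plan is to deduce this corollary directly from the two preceding results by contrapositive. Specifically, Proposition \ref{P:reduced-discriminant-2} asserts that for a semi-stable curve $C$, non-reducedness of $\Delta(C)$ forces $C$ itself to be non-reduced. Contrapositively, if $C$ is a reduced semi-stable curve, then $\Delta(C)$ must be reduced.

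Once reducedness of $\Delta(C)$ is in hand, I would invoke Lemma \ref{L:equivalence}, which states that the discriminant of a complete intersection of three quadrics is reduced precisely when the curve lies on a smooth quartic del Pezzo in $\PP^4$. Note that by Corollary \ref{C:net=curve}, a semi-stable net does define a pure one-dimensional complete intersection, so Lemma \ref{L:equivalence} applies. Combining the two implications then yields the corollary.

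There is no real obstacle here: the corollary is essentially a bookkeeping consequence of Proposition \ref{P:reduced-discriminant-2} (the substantive work having already been done in the classification of semi-stable curves with non-reduced discriminant via Propositions \ref{P:discriminant-double-line} and \ref{P:discriminant-double-conic}) together with the elementary Lemma \ref{L:equivalence}. The only point worth stating explicitly in the proof is the chain \emph{$C$ reduced $\Rightarrow$ $\Delta(C)$ reduced $\Rightarrow$ $C$ lies on a smooth quartic del Pezzo}, so I would keep the write-up to a single short paragraph.
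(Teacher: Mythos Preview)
Your proposal is correct and follows exactly the paper's approach: apply Proposition~\ref{P:reduced-discriminant-2} contrapositively to conclude that $\Delta(C)$ is reduced, then invoke Lemma~\ref{L:equivalence} to deduce that $C$ lies on a smooth quartic del Pezzo. Your explicit mention of Corollary~\ref{C:net=curve} to justify applicability of Lemma~\ref{L:equivalence} is a reasonable extra care, but otherwise there is nothing to add.
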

\begin{proof}
By Proposition \ref{P:reduced-discriminant-2}, the discriminant
of a reduced semi-stable curve is reduced and so the curve lies on a smooth quartic 
del Pezzo by Lemma \ref{L:equivalence}.
\end{proof}

Conversely, we now prove Part (1) of Theorem \ref{T:reduced-semi-stable} stating that a
semi-stable curve on a smooth quartic del Pezzo is necessarily reduced.
\begin{prop}
\label{P:reduced-discriminant-1}
Suppose $C$ is a semi-stable curve. If $C$ lies on a smooth quartic del Pezzo, 
then $C$ is reduced.
\end{prop}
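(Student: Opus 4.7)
The plan is to establish the contrapositive: if $C$ is non-reduced and semi-stable, then $\Delta(C)$ is non-reduced, and hence by Lemma \ref{L:equivalence} $C$ does not lie on a smooth quartic del Pezzo. The strategy is to classify non-reduced semi-stable curves into a short list of standard forms and then verify the discriminant directly for each.

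Let $D\subseteq C_{\red}$ be an irreducible reduced curve along which $C$ has generic multiplicity $\geq 2$; since $\deg C=8$, we have $\deg D\leq 4$. If $D$ is an elliptic quartic and $C=2D$, then $C$ is $\rho_1$\nb-unstable by Theorem \ref{T:Description}(1), contradicting semi-stability. The remaining cases I handle as follows. If $\deg D=2$, Proposition \ref{P:double-conic} immediately places $\Lambda$ in the form \eqref{E:double-conic}. If $\deg D=3$, the residual component has degree $2$, and reworking the argument from Case 2 ($D$ a twisted cubic) of Proposition \ref{P:discriminant-double-conic} puts $\Lambda$ in the form \eqref{E:dtc}. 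If $\deg D=4$ with $D$ a rational normal quartic, then $C$ is a ribbon on $D$ and Lemma \ref{L:veronese} forces $\Lambda=I_R$. Finally, if $\deg D=1$, I argue that the pencil of quadrics in $\Lambda$ singular along $L$ has vertices sweeping out $L$ itself, which places $\Lambda$ within the hypothesis of Proposition \ref{P:double-line} (i.e., $C$ lies on $P_1$), whence $C$ degenerates isotrivially to the curve defined by $I_{DL}$.

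With $\Lambda$ reduced to one of the four standard forms $I_R$, $I_{DL}$, \eqref{E:dtc}, or \eqref{E:double-conic}, the discriminant $\Delta(\Lambda)=\det(xQ_1+yQ_2+zQ_3)$ is computed by a direct $5\times 5$ symmetric determinant. For $I_R$, the matrix of the generic pencil element is block-diagonal on the index sets $\{a,c,e\}$ and $\{b,d\}$, yielding $\Delta(I_R)\sim y(xz-y^2)^2$; an analogous block structure gives $\Delta(I_{DL})\sim y(xz-y^2)^2$. For the forms \eqref{E:dtc} and \eqref{E:double-conic}, the pencil $(ad-bc,\,be-cd)\subset\Lambda$ consists entirely of singular quadrics by Lemma \ref{L:reducible-quartic}, so the corresponding line in $\PP(\Lambda)$ is contained in $\Delta(\Lambda)$; a direct expansion of the determinant shows this contribution is doubled (or supplemented by another double factor coming from the conic of vertices $b=d=ae-c^2=0$), so that in every case $\Delta(C)$ is non-reduced.

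The main obstacle is the double-line case $\deg D=1$, since Proposition \ref{P:double-line} is stated only for curves on the specific del Pezzo $P_1$. To bridge this, I must show that any semi-stable $C$ with a double-line structure actually lies on a del Pezzo with Segre symbol $(1,2,2)$. This should follow by tracking the rank behavior of quadrics in the pencil of $\Lambda$ singular along $L$ and using Theorem \ref{T:Description}(3),(4),(6) to exclude the alternatives (common singular points, residual curves meeting $L$ too much, planar 4-fold points) that would violate semi-stability.
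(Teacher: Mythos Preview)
Your contrapositive approach is unnecessarily circuitous and has real gaps. The paper's proof is a direct two-step argument that you are missing. First, if $C$ lies on a smooth quartic del Pezzo $S$ cut out by a pencil $\ell\subset\Lambda$, then no quadric in $\ell$ is singular at any point of $S$ (else $S$ itself would be singular there). Second, for each $p$ on a non-reduced component $D$ there is a quadric $Q_p\in\Lambda$ singular at $p$; the assignment $p\mapsto[Q_p]$ is a morphism from the irreducible projective curve $D$ into $\PP(\Lambda)\setminus\PP(\ell)$, and an irreducible closed subvariety of $\PP^2$ missing a line is a single point. Thus one quadric $Q_3\notin\ell$ is singular along all of $D$, hence along the linear span of $D$, and Lemma \ref{L:Weights1} immediately destabilizes $\Lambda$ via $\rho_3$, $\rho_2$, or $\rho_1$ according as $D$ is a line, a conic, or spans a hyperplane.

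Your argument, by contrast, has specific holes. In the double-line case, there is no ``pencil of quadrics in $\Lambda$ singular along $L$'': such a pencil would already make $\Lambda$ $\rho_3$-unstable by Lemma \ref{L:Weights1}(3b). More seriously, even granting that $C$ lies on $P_1$ and degenerates isotrivially to the curve of $I_{DL}$, this says nothing about $\Delta(C)$, because non-reducedness of the discriminant at the special fiber does not propagate back to the generic fiber (reducedness is an open condition, not a closed one). The same defect appears in your rational-normal-quartic case: Lemma \ref{L:veronese} applies only once you know $C$ lies on the Veronese $V$, which you have not established. The paper sidesteps all of this by using the smooth del Pezzo as a genuine hypothesis rather than merely the negation of a conclusion; no discriminant is ever computed.
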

\begin{proof} Suppose $C$ lies on a smooth quartic del Pezzo $S$ and 
let $\ell$ be the pencil of quadrics containing $S$. 
To prove that $C$ is reduced, we argue by contradiction. 
Suppose that $C$ has a non-reduced irreducible component $D$. Then for every point $p\in D$, there is
a quadric in the ideal of $C$ that is singular at $p$. Since quadrics in $\ell$ cannot be
singular along $D$, we deduce that the ideal of $C$ contains a single quadric that is singular along all of $D$.
This leads to a contradiction using Lemma \ref{L:Weights1}. Indeed,
if $D$ is a line, then $C$ is destabilized by $\rho_3=(3,3,-2,-2,-2)$;
if $D$ is a conic, then $C$ is destabilized by $\rho_2=(2,2,2,-3,-3)$;
if $D$ is a twisted cubic, then $C$ is destabilized by $\rho_1=(1,1,1,1,-4)$.
\end{proof}

\subsection{Proofs of Theorem \ref{T:reduced-semi-stable} and \ref{T:non-reduced-semi-stable}}
\begin{proof}[Proof of Theorem \ref{T:reduced-semi-stable}]
To finish the proof of Theorem \ref{T:reduced-semi-stable}, we observe that Parts (2)--(7) 
follow from Theorem \ref{T:Description} (6)(a), (7), (8), (10), (11), (12), respectively.
\end{proof}

\begin{proof}[Proof of Theorem \ref{T:non-reduced-semi-stable}] 
To prove Theorem \ref{T:non-reduced-semi-stable}, we note that a non-reduced semi-stable curve 
has a non-reduced discriminant by Proposition \ref{P:reduced-discriminant-1} and Lemma \ref{L:equivalence}. 
It follows from Propositions \ref{P:discriminant-double-line} and \ref{P:discriminant-double-conic} that 
a non-reduced semi-stable curve degenerates isotrivially to a curve in $N_1\cup N_2\cup N_3\cup N_4$.

We note that $N_1$ and $N_4$ each consist of a single point, 
hence irreducible. To prove irreducibility of $N_2$ and $N_3$, we recall that a 
semi-stable curve with a double twisted cubic component is, up to projectivity, given by Equation \eqref{E:dtc}:
\begin{equation*}
(ad-bc, ae-c^2+L^2, be-cd).
\end{equation*} Similarly, 
 a semi-stable with a double conic component is, up to projectivity, given by Equation \eqref{E:double-conic}:
 \begin{equation*}
 (ad-bc, ae-c^2+bL_1+dL_2, be-cd), 
 \end{equation*} Irreducibility of the space of linear forms implies irreducibility of $N_2$ and $N_3$.
  
We proceed to prove the semi-stability of the general point of $N_i$ ($i=1,\dots, 4$) 
using the Kempf-Morrison criterion \cite[Proposition 2.4]{AFS}.
\begin{prop}\label{P:kempf}
The ideals $I_R, I_{DT}, I_{T}, I_{DL}$, defined by
\begin{align*}
I_R&:=(ac-b^2, ae-2bd+c^2, ce-d^2),\\
I_{DT}&:=(ad-b^2, ae-bd+c^2, be-d^2),\\
I_T&:=(ad-bc, ae+bd-c^2, be-cd),\\
I_{DL}&:=(ad, ae+bd-c^2, be),
\end{align*}
are semi-stable.
\end{prop}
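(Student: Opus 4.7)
The plan is to apply the Kempf--Morrison criterion \cite[Proposition~2.4]{AFS}, which reduces the verification of $\SL(5)$-semi-stability of a point $x$ to that of $Z_{\SL(5)}(\lambda)$-semi-stability, where $\lambda$ is any 1-PS in the stabilizer of $x$. The first step is to exhibit an explicit stabilizing 1-PS for each ideal. A direct weight computation shows that $I_R$ is fixed by $\lambda_R = (2,1,0,-1,-2)$ with its three generators being eigenvectors of weights $2$, $0$, $-2$; that $I_{DT}$ is fixed by $\lambda_{DT}=(3,1,0,-1,-3)$ with generator weights $2$, $0$, $-2$; that $I_T$ is fixed by $\lambda_T=(2,1,0,-1,-2)$ with generator weights $1$, $0$, $-1$; and that $I_{DL}$ is fixed by the rank-two torus $T_0$ spanned by $(1,0,0,0,-1)$ and $(0,1,0,-1,0)$. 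In every case the stabilizing 1-PS acts with pairwise distinct weights on $V$, so the centralizer in $\SL(5)$ collapses to the maximal diagonal torus $T$, and semi-stability reduces to $T$-semi-stability.

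Next, I would invoke Theorem~\ref{T:subgroups} together with Remark~\ref{R:torus-remark}: to verify $T$-semi-stability, it suffices to check $\rho_i$-semi-stability for each of the twelve numerical types $\{\rho_i\}_{i=1}^{12}$, for every reordering of the basis $\{a,b,c,d,e\}$ that makes the weights of $\rho_i$ weakly decreasing. For each ideal and each such $\rho_i$, I would read off the initial monomials of the three generators and exhibit a \plucker\ monomial of non-negative $\rho_i$-weight. The symmetries of the ideals cut the bookkeeping down substantially: all four ideals are preserved by the involution $a \leftrightarrow e$, $b \leftrightarrow d$ with $c$ fixed (induced for $I_R$ by the involution $s \leftrightarrow -t$ on $\PP^1$), and $I_{DL}$ is further preserved by the coordinate swap $a \leftrightarrow b$, $d \leftrightarrow e$, so that its symmetry group on $\{a,b,d,e\}$ enlarges to a Klein four.

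The main obstacle is purely combinatorial: enumerating the relevant normalized orderings of the basis for each of the twelve 1-PS types, and producing the required \plucker\ monomial in each case. In most cases the weight comes out to exactly $0$, consistent with the fact that for a $\lambda$-fixed point the state polytope lies in $\lambda^\perp$ and touches the origin precisely on a boundary facet. The handful of cases where the naive choice of initial monomials does not already produce a non-negative \plucker\ weight are handled by swapping in a sub-leading monomial of one generator: the essential flexibility arises because the generators of each ideal contain multiple monomials of equal $\lambda$-weight, giving room to perturb the initial form under small variations of the 1-PS.
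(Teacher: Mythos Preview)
Your proposal is correct and follows the same approach as the paper: exhibit a stabilizing 1-PS with distinct weights for each ideal (the paper uses $(2,1,0,-1,-2)$, $(3,1,0,-1,-3)$, $(2,1,0,-1,-2)$, $(2,1,0,-1,-2)$ respectively), apply the Kempf--Morrison criterion to reduce to the diagonal torus, and then invoke Theorem~\ref{T:subgroups} and Remark~\ref{R:torus-remark} to reduce the torus check to the twelve numerical types. The paper dismisses this last step as ``an easy exercise,'' whereas you supply more detail, including the useful symmetry observations (the involution $a\leftrightarrow e$, $b\leftrightarrow d$ on all four ideals, and the extra $a\leftrightarrow b$, $d\leftrightarrow e$ symmetry on $I_{DL}$) that cut down the casework; one minor imprecision is that for $I_{DL}$ you name a rank-two torus rather than a single 1-PS, but any generic 1-PS in that torus (e.g.\ $(2,1,0,-1,-2)$) has distinct weights, so the reduction to the maximal torus goes through as stated.
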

\begin{proof}
Each of the ideals is stabilized by a certain 1-PS acting diagonally with respect to the distinguished basis 
$\{a,b,c,d,e\}$. Indeed, they are stabilized by $(2,1,0,-1,-2)$, $(3,1,0,-1,-3)$, $(2,1,0,-1,-2)$, and 
$(2,1,0,-1,-2)$, respectively.
By the Kempf-Morrison criterion \cite[Proposition 2.4]{AFS}, 
it therefore suffices to check that these curves are semi-stable 
with respect to 1-PS's acting diagonally with respect to this basis. 
By Theorem \ref{T:subgroups} (see also Remark \ref{R:torus-remark}), 
it suffices to check the given finite list of 1-PS's acting diagonally with respect to this basis, 
and this is an easy exercise.
We should remark that semi-stability of the balanced ribbon $I_R$ 
is a special case of a more general \cite[Theorem 4.1]{AFS}. 
\end{proof}
Observing that $I_R \in N_1, I_{DT}\in N_2, I_T\in N_3, I_{DL} \in N_4$ finishes the proof of 
Theorem \ref{T:non-reduced-semi-stable}.
\end{proof}
\begin{remark}\label{R:triple-conic-rank-3-quadrics}
We note that $I_T=(ad-bc, ae+bd-c^2, be-cd)$ contains no rank $3$ quadric. Thus
\cite[Theorem 2.1]{fedorchuk-jensen} provides an easier, independent proof for the semi-stability of $I_T$.
\end{remark}
\begin{remark}\label{R:triple-conic} 
We observe that the discriminant quintic of $I_T$ is $2y^3(xz-y^2)$, which is unstable under the natural
$\SL(3)$-action on the space of plane quintics.
\end{remark}

\section{Proofs of the main results}\label{S:proofs}
In this section we prove Main Theorems 1--3 from the introduction. 
Main Theorem \ref{MT1} follows immediately from Theorems 
\ref{T:reduced-semi-stable} and \ref{T:non-reduced-semi-stable}.

\begin{proof}[Proof of Main Theorem \ref{MT2}]
The divisor of singular complete intersections in $\MX$ is irreducible. Furthermore, from Main Theorem \ref{MT1}, its
general point corresponds to a one-nodal curve on a smooth quartic del Pezzo. It follows that the 
inverse rational map $f^{-1}\co \MX \dashrightarrow \M_5$ does not contract divisors and so $f$ is a contraction. 

By Theorem \cite[Theorem 6.2 and Theorem 6.3]{hassett-stable},
the general curve in $\Delta_1$ arises as a stable limit of a genus $5$ curve with
an $A_2$ singularity and the general curve in $\Delta_2$ arises as a stable limit of the general curve
in the $A_5^{\{1\}}$-locus. 
It follows that $\Delta_1$ and $\Delta_2$ 
are generically fibered over $A_2$- and $A_5^{\{1\}}$-loci, respectively.

It remains to prove that the trigonal divisor is contracted to a single point given by Equation \eqref{E:triple-conic}.
Recall that a general smooth trigonal curve of genus $5$ has a very ample canonical line bundle 
and its canonical embedding lies on a cubic scroll, whose homogeneous ideal is 
$(ad-bc, ae-c^2, be-cd)$, up to projectivities. 
A trigonal curve on the scroll is cut out by two linear independent cubics 
$$(aR_1-bR_2+cR_3, cR_1-dR_2+eR_3),$$ where $R_1,R_2,R_3$ are quadrics in $\CC[a,b,c,d,e]$.
In particular, a general trigonal curve is obtained by taking $\{R_i\}_{i=1}^{3}$ to be general.

\begin{prop}\label{P:trigonal-contraction} 
The rational map $f\co \M_5 \dashrightarrow \MX$ 
contracts the trigonal divisor $\Trig_{5}$ to the point 
$$I_T:=(ad-bc, ae+bd-c^2, be-cd)\in \MX.$$
\end{prop}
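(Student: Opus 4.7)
The plan is to show that every trigonal genus $5$ canonical curve gives rise to the same $\SL(5)$-orbit of nets of quadrics --- the ideal of its cubic scroll --- that this orbit is unstable but lies in the closure of the $\SL(5)$-orbit of $I_T$, and therefore the GIT-theoretic limit in $\MX$ is the single point $[I_T]$.

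\emph{Step 1 (The quadratic part of a trigonal canonical ideal is the scroll.)} By the classical theory of canonical trigonal curves (Enriques-Babbage, Saint-Donat), a general smooth trigonal genus $5$ curve $C$ has very ample canonical bundle, and the degree-$2$ part of its canonical ideal coincides with the ideal $I_\Sigma$ of the unique cubic scroll $\Sigma \subset \PP^4$ containing $C$. As noted in the paragraph preceding the proposition, up to projective equivalence one has $I_\Sigma = (ad-bc,\, ae-c^2,\, be-cd)$, the $2 \times 2$ minors of $\begin{pmatrix} a & b & c \\ c & d & e \end{pmatrix}$. Hence the rational map $f$ sends every trigonal curve to the single $\SL(5)$-orbit $[I_\Sigma] \in \Gr(3,15)$.

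\emph{Step 2 (Exhibit $I_\Sigma$ in the orbit closure of $I_T$.)} Consider the one-parameter family
\begin{equation*}
I_t := (ad-bc,\, ae + t\,bd - c^2,\, be-cd), \qquad t \in \CC,
\end{equation*}
so that $I_0 = I_\Sigma$ and $I_1 = I_T$. The plan is to verify that a diagonal element $g = \operatorname{diag}(\alpha_1, \alpha_2, \alpha_3, \alpha_4, \alpha_5) \in \SL(5)$ satisfying the three identities $\alpha_1\alpha_4 = \alpha_2\alpha_3$, $\alpha_2\alpha_5 = \alpha_3\alpha_4$, and $\alpha_2\alpha_3^3\alpha_4 = 1$ sends $I_t$ to $I_{t/\alpha_3^5}$; for any $t \neq 0$ one can choose $\alpha_3 = t^{1/5}$ and free $\alpha_2$, producing a transformation sending $I_t$ to $I_T = I_1$. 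Thus $\{I_t\}_{t \neq 0}$ is a single $\SL(5)$-orbit, and $I_\Sigma = I_0 \in \overline{\SL(5) \cdot I_T}$.

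\emph{Step 3 (Identify the semistable limit.)} The net $I_T$ is semi-stable by Proposition \ref{P:kempf} and is fixed by the $1$-PS $(s^{-2}, s^{-1}, 1, s, s^2)$, so its orbit is strictly polystable; since the orbits of both $I_\Sigma$ (destabilized by $\rho_6$) and $I_\infty := (ad-bc, bd, be-cd)$ in the boundary of $\overline{\SL(5)\cdot I_T}$ are unstable, $\SL(5)\cdot I_T$ is the unique closed orbit in $\overline{\SL(5)\cdot I_T} \cap \Gr(3,15)^{ss}$. For any smoothing family $\mathcal{C}_s$ in $\Mg{5}$ of a trigonal curve $\mathcal{C}_0$ with $\mathcal{C}_s$ non-trigonal non-hyperelliptic for $s \neq 0$, the induced family of canonical nets $J_s \in \Gr(3,15)^{ss}$ approaches $I_\Sigma$ as $s \to 0$; by properness of $\MX$ and the valuative criterion, $\lim_{s \to 0}\pi(J_s) = [I_T]$, yielding $f(\Trig_5) = [I_T]$.

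The main obstacle is the uniqueness in Step 3: the argument must show that \emph{every} semi-stable smoothing of $I_\Sigma$ --- not just the explicit one-parameter family $\{I_t\}$ --- has GIT-class $[I_T]$. This reduces via Luna's slice theorem at $I_T$, or the Hesselink stratification of the destabilizing $1$-PS of $I_\Sigma$, to the assertion that every semi-stable tangent direction at $I_\Sigma$ transverse to $\SL(5) \cdot I_\Sigma$ can be translated, using the stabilizer of $I_T$ and the parameter $\alpha_3$ above, into the family $\{I_t\}_{t \neq 0}$ and hence back onto the orbit of $I_T$.
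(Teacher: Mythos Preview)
Your argument has a genuine gap, which you correctly flag in the final paragraph but do not resolve; and the gap stems from a subtle misreading in Step~1 of what $f$ actually does on $\Trig_5$.

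In Step~1 you write that ``$f$ sends every trigonal curve to the single $\SL(5)$-orbit $[I_\Sigma] \in \Gr(3,15)$.'' But $f$ takes values in $\MX$, the quotient of the \emph{semi-stable} locus, and $I_\Sigma$ is unstable (indeed, it cuts out a surface). What is true is that the canonical-net map $\M_5 \dashrightarrow \Gr(3,15)$ sends a trigonal curve to $I_\Sigma$; the value of $f$ there is determined only by \emph{extension} across the divisor $\Trig_5$, i.e., by choosing a smoothing $\mathcal{C}_s$ with $\mathcal{C}_s$ non-trigonal for $s\ne 0$ and computing $\lim_{s\to 0}\pi(\Lambda_{\mathcal{C}_s})$ in $\MX$.

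This is where Step~2 falls short. Your family $I_t=(ad-bc,\ ae+t\,bd-c^2,\ be-cd)$ shows $I_\Sigma \in \overline{\SL(5)\cdot I_T}$, but that is the wrong direction. For $t\ne 0$ the nets $I_t$ are all in the orbit of $I_T$; the corresponding curves are all the fixed triple-conic configuration, not smooth genus~$5$ curves limiting in $\M_5$ to a general trigonal curve. Knowing that the orbit of $I_T$ specializes to $I_\Sigma$ does not, by itself, force every semi-stable smoothing of $I_\Sigma$ to have GIT-limit $[I_T]$: a priori some other polystable orbit could also have $I_\Sigma$ in its closure, and a smoothing could pick out that class instead. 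Your appeal to Luna's slice or the Hesselink stratification is too vague to close this; making it precise would amount to analyzing the full normal slice to $\SL(5)\cdot I_\Sigma$ and its GIT, which you have not done.

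The paper sidesteps this entirely by working in the correct direction. It writes down a family $\Lambda_t=(be-cd+t^2R_1,\ ae-c^2+t^2R_2,\ ad-bc+t^2R_3)$ with general $R_i$, so that $C_t$ is a smooth non-trigonal curve for $t\ne 0$, and uses the linear syzygies of the scroll to show the flat limit $C_0$ is a \emph{general} smooth trigonal curve. It then applies an explicit $\rho_t\in\PGL(5)$ to $\Lambda_t$ and checks that $\rho_t\cdot\Lambda_t$ converges to a semi-stable net of the shape $(ad-bc,\ ae-c^2+S(b,d),\ be-cd)$ whose orbit closure contains $I_T$. Since $f$ is already known to be defined at the generic point of $\Trig_5$, this \emph{single} family suffices to give $f(C_0)=[I_T]$, and irreducibility of $\Trig_5$ finishes the proof. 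Your Step~2 computation is in fact the last step of this argument; what is missing is the construction of a genuine smoothing to a general trigonal curve and the identification of its semi-stable replacement.
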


\begin{proof}
Since $\Trig_5$ is a divisor, $f$ is defined at the generic point of $\Trig_5$. Hence to show that $f$ contracts
$\Trig_5$ to a point, we need to show that a general trigonal curve arises as a stable limit for
some deformation of $I_T$. 

Consider the family of nets $\Lambda_t=(Q_1(t), Q_2(t), Q_3(t))$ defined by 
\begin{align*}
Q_1(t) &=be-cd+t^2R_1(a,b,c,d,e), \\
Q_2(t) &=ae-c^2+t^2R_2(a,b,c,d,e), \\
Q_3(t) &=ad-bc+t^2R_3(a,b,c,d,e),
\end{align*}
where $R_1,R_2,R_3$ are general quadrics. 
Then for $t\neq 0$, $\Lambda_t$ defines a smooth non-trigonal curve $C_t$ of genus $5$,
while $\Lambda_0=(ad-bc, ae-c^2, be-cd)$ defines a cubic scroll in $\PP^4$.
Let $C_0$ be the flat limit of $\{C_t\}_{t\neq 0}$ as $t\to 0$. 
Using linear syzygies among the quadrics containing the scroll, we see that 
\begin{align*}
F_1 &:=\frac{1}{t^2}\bigl(aQ_1(t)-bQ_2(t)+cQ_3(t)\bigr)\vert_{t=0}=a R_1-b R_2+c R_3, \\
F_2 &:=\frac{1}{t^2}\bigl(cQ_1(t)-dQ_2(t)+eQ_3(t)\bigr)\vert_{t=0}=cR_1-d R_2+eR_3
\end{align*}
are cubics in the ideal of $C_0$. Since $R_1, R_2, R_3$ were chosen generically, we conclude
that $C_0$ is a general smooth trigonal curve. 

It now suffices to show that the limit of $\{\Lambda_t\}_{t\neq 0}$ in $\MX$ is the point $I_T$.
Let $\rho_t \in \operatorname{PGL}(5)$ be given by 
$\rho_t\cdot [a:b:c:d:e]=[a:t^{-1}b: c: t^{-1}d: e].$
Set $\Lambda'_t:=\rho_t\cdot \Lambda_t$.
Then 
the flat limit of $\Lambda'_t$ as $t\to 0$ is 
$\Lambda'_0=(ad-bc, ae-c^2+R_2(0,b,0,d,0), be-cd)$. 
Since $R_2$ was chosen to be a general quadric, $S(b,d):=R_2(0,b,0,d,0)$ has rank $2$.

We claim that, without loss of generality, we may take 
$R_2(0,b,0,d,0)=bd +\eta d^2$ for some scalar $\eta$. 
This implies that $\Lambda'_0$ is semi-stable and that its orbit closure contains $I_T$, 
since the limit  as $t\to \infty$ of 
$(ae-bc, ae-c^2+ bd +\eta d^2, be-cd)$ 
under the one-parameter subgroup $(t^2,t,1,t^{-1},t^{-2})$ is 
$I_T=(ad-bc, ae-c^2+ bd, be-cd)$.

It remains to show that we may take $R_2(0,b,0,d,0)=bd +\eta d^2$. 
Let $S(b,d)=L_1(b,d)L_2(b,d)$, where $L_1$ and $L_2$ are linearly independent linear forms.
Without loss of generality, $L_1(b,d)=d+\mu b$, where $\mu\in C$.
Make the following coordinate change:
\begin{align*}
 a' &:=a, \\
 b' &:=b, \\
 c' &:=c+\mu a, \\
 d' &:=d+\mu b, \\
 e' &:=e+2\mu c +\mu^2 a. 
 \end{align*}
 Let $M(b',d')=\lambda b'+\nu d'$ be the linear form such that $M(b',d')=L_1(b,d)$. 
 Note that $\lambda\neq 0$. After scaling, we can assume that $\lambda=1$.
 Then 
 \begin{align*}
 a'd'-b'c' &=ad-bc, \\ 
  b'e'-c'd' &=be-cd-\mu(ad-bc),     \\ 
 a'e'+M(b',d')d'-(c')^2&= ae-c^2+L_1(b,d)L_2(b,d),
\end{align*}
as desired.
\end{proof}
\renewcommand{\qedsymbol}{}
\end{proof}

\begin{proof}[Proof of Main Theorem \ref{T:moving-slope}]
By Main Theorem \ref{MT2}, $f$ is a contraction. We compute $f^*\O(1)$ using two methods. 

The most straightforward way is to write 
down three test families along which $f$ is regular and which are contracted by $f$.
Consider the following families in $\Mg{5}$:
\begin{enumerate}
\item A family $T_1$ of elliptic tails attached to a fixed general pointed genus $4$ curve.
We have $\lambda\cdot T_1=1$, 
$\delta_0\cdot T_1=12$, $\delta_1\cdot T_1=-1$, $\delta_2\cdot T_1=0$.
Furthermore, deformations of $T_1$ cover $\Delta_1$.

\item A family $T_2$ of genus $2$ tails attached to a fixed general pointed genus $3$ curve at 
a non-Weierstrass point; see \cite[Section 4.4]{handbook} for a precise description of the construction. 
We have $\lambda\cdot T_3=3$, $\delta_0\cdot T_3=30$, $\delta_2\cdot T_3=-1$, $\delta_1\cdot T_3=0$.
Furthermore, deformations of $T_2$ cover $\Delta_2$.
\item A family $T_3$ of curves in $\Trig^{0}$ satisfying $\lambda\cdot T_3=4$, 
$\delta_0\cdot T_3=33$, $\delta_i\cdot T_3=0$ for $i=1,2$. Such a family exists by \cite{anands},
where it is also shown that deformations of $T_3$ cover $\Trig_5$.
\end{enumerate}

By Main Theorem \ref{MT2}, $f$ contracts each $T_i$. Namely, $f(T_1)$ is a semi-stable cuspidal curve,
$f(T_2)$ is a semi-stable curve in the $A_{5}^{1}$-locus, and $f(T_3)=[I_T]$.
 Therefore, {\em assuming that $f$ is regular along each $T_i$}, 
we have $f^*\O(1).T_i=0$ for each $i=1,2,3$. 
Writing $f^*\O(1)=a\lambda-b\delta_0-c\delta_1-d\delta_2$ and intersecting both sides with $T_i$, we obtain
\[
f^*\O(1) \sim 33\lambda-4\delta_0-15\delta_1-21\delta_2
\]
as desired.
Unfortunately, proving $f$ is regular along each $T_i$ directly would require 
several rather subtle stable reduction calculations. 
Thus, we give an alternative computation of $f^*\O(1)$, 
which implies the desired regularity {\em a posteriori}. 

Let $\mathcal{D} \subset \MX$ be the divisor of nets containing rank $3$ quadrics. 
Note that, $I_R \in \mathcal{D}$ (see Proposition \ref{P:kempf} for the definition of $I_R$). 
In particular, $\mathcal{D}$
is non-empty. $\mathcal{D}$ is irreducible because the divisor of nets in $\Gr(3,15)$
containing a rank $3$ quadric is irreducible. By Remark \ref{R:triple-conic-rank-3-quadrics},
$I_T\notin \mathcal{D}$, and since $I_T$ lies in the closure of $A_2$- and $A_5^{\{1\}}$-loci, we conclude
that $f^{-1}\mathcal{D}$ does not contain $\Delta_1$, $\Delta_2$, or $\Trig_5$. It follows that
$f^{-1}\mathcal{D}$ is the divisor of genus $5$ curves with a vanishing theta-null. 
By \cite[Proposition 3.1]{teixidor}, the class of this divisor is proportional to 
$4(33\lambda-4\delta_0-15\delta_1-21\delta_2)$.
This proves Part (1) of the theorem.

To prove Part (2), simply observe that 
\[
K_{\Mg{5}}+\frac{14}{33}\delta 
\sim \bigl(33\lambda-4\delta_0-15\delta_1-21\delta_2\bigr)+11\delta_1+17\delta_2
\]
and 
\[
K_{\Mg{5}}+\frac{3}{8}\delta 
\sim \bigl(8\lambda-\delta_0-4\delta_1-6\delta_2\bigr)+3\delta_1+5\delta_2,
\]
where $8\lambda-\delta_0-4\delta_1-6\delta_2$ is an effective multiple of the divisor class of $\Trig_5$ 
by Brill-Noether Ray Theorem \cite[Theorem 6.62]{HarMor}.
\end{proof}

\bibliographystyle{amsalpha}

\end{document}